\colorlet{linkequation}{blue}
\definecolor{dgreen}{rgb}{0,0.5,0}
\definecolor{violet}{rgb}{0.5,0,0.5}
\definecolor{dred}{rgb}{0.7,0,0}
\definecolor{ddred}{rgb}{0.5,0,0}
\definecolor{dblue}{rgb}{0,0,0.5}
\definecolor{ddblue}{rgb}{0,0,0.3}
\newtheorem{theorem}{Theorem}[section]
\newtheorem{lemma}[theorem]{Lemma}
\newtheorem{definition}[theorem]{Definition}
\newtheorem{proposition}[theorem]{Proposition}
\newtheorem{remark}[theorem]{Remark}
\numberwithin{equation}{section}
\DeclareMathOperator*{\ddiv}{div}
\newcommand{\capp}{\text{cap}_p}
\newcommand{ \mr }{ \mathbb{R} }
\newcommand{ \m }{ \mathcal{M} }
\newcommand{\integral}[3]{\int_{#1} #2 \ #3}
\newcommand{\integraL}[4]{\int_{#1}^{#2} #3 \ #4}
\newcommand{\mint}[3]{\fint_{#1} #2 \ #3}
\newcommand{\norm}[1]{\left| #1\right|}
\newcommand{\Norm}[1]{\left|\hspace{-0.2mm}\left| #1 \right|\hspace{-0.2mm}\right|}
\newcommand{\gh}[1]{\left( #1\right)}
\newcommand{\mgh}[1]{\left\{ #1\right\}}
\newcommand{\bgh}[1]{\left[ #1\right]}
\newcommand{\vgh}[1]{\left< #1\right>}
\newcommand{\OO}{\Omega}
\newcommand{\ma}{\mathbf{a}}
\newcommand{\omt}{\Omega_\mathfrak{T}}
\newcommand{\omtb}{\Omega_{\widetilde{T}}}
\newcommand{\lpwwz}{L^p (0,T; W_0^{1,p}(\OO))}
\newcommand{\ik}[1]{K_{#1}^{\lambda}}
\newcommand{\iq}[1]{Q_{#1}^{\lambda}}
\newcommand{\iqq}[1]{Q_{#1}^{\lambda,+}}
\newcommand{\pc}[1]{\left \lfloor {#1} \right \rfloor}
\newcommand{\be}{\beta}
\newcommand{\la}{\lambda}
\newcommand{\La}{\Lambda}
\newcommand{\ep}{\varepsilon}
\newcommand{\C}{\mathfrak{C}}
\newcommand{\D}{\mathfrak{D}}
\newcommand{\M}{\mathcal{M}}
\newcommand{\bb}{\mathbb{R}}
   \def\MR#1{}
\def\XXint#1#2#3{{\setbox0=\hbox{$#1{#2#3}{\int}$}
    \vcenter{\hbox{$#2#3$}}\kern-.5\wd0}}
\begin{document}

\title[Singular parabolic measure data problems]{Regularity estimates for singular parabolic measure data problems with sharp growth}

\author[J.-T. Park]{Jung-Tae Park}
\address{J.-T. Park: Korea Institute for Advanced Study, Seoul 02455, Republic of Korea}
\email{ppark00@kias.re.kr}

\author[P. Shin]{Pilsoo Shin}
\address{P. Shin: Department of Mathematics, Kyonggi University, Suwon 16227, Republic of Korea}
\email{shinpilsoo.math@kgu.ac.kr}

\thanks{J.-T. Park was supported by the National Research Foundation of Korea grant (No. NRF-2019R1C1C1003844) from the Korea government and a KIAS Individual Grant (No. MG068102) from Korea Institute for Advanced Study. P. Shin was supported by the National Research Foundation of Korea grant  (No. NRF-2020R1I1A1A01066850) from the Korea government.}

\subjclass[2010]{Primary 35K92; Secondary 35R06, 35B65, 42B37}

\date{October 29, 2021 and, in revised form, January 20, 2022}

\keywords{singular parabolic equation; measure data; regularity estimate; Reifenberg flat domain}

\begin{abstract}
We prove global gradient estimates for parabolic $p$-Laplace type equations with measure data, whose model is
$$u_t - \textrm{div}  \left(|Du|^{p-2} Du\right) = \mu \quad \textrm{in} \ \Omega \times (0,T) \subset \mathbb{R}^n \times \mathbb{R},$$
where $\mu$ is a signed Radon measure with finite total mass. We consider the singular case
$$\frac{2n}{n+1} <p \le 2-\frac{1}{n+1}$$
and give possibly minimal conditions on the nonlinearity and the boundary of $\Omega$, which guarantee the regularity results for such measure data problems.
\end{abstract}

\maketitle



\section{Introduction and results}\label{Introduction}

Partial differential equations with measure data allow to take into account a variety of models in the area of applied mathematics: for instance, the flow pattern of blood in the heart \cite{Pes77, PM89}, surface tension forces concentrated on the interfaces of fluids \cite{SSO94, LL94, OF03}, and state-constrained optimal control theory \cite{Cas86, Cas93, CdT08, MPS11}.

In this paper, we establish global gradient estimates for solutions of quasilinear parabolic equations with measure data, having the form
\begin{equation}
\label{pem1}\left\{
\begin{alignedat}{3}
u_t -\ddiv \mathbf{a}(Du,x,t)  &= \mu &&\quad \text{in}  \  \OO_T, \\
u  &= 0 &&\quad \text{on} \  \partial_p \OO_T.
\end{alignedat}\right.
\end{equation}
Here $\OO_T := \OO \times (0,T)$ is a cylindrical domain with parabolic boundary $\partial_p \OO_T := \gh{\partial \OO \times [0,T]} \cup \gh{\OO \times \{0\}}$, where $\Omega \subset \bb^n$ is a bounded domain with nonsmooth boundary $\partial \Omega$, $n \ge 2$ and $T>0$. The nonhomogeneous term $\mu$ is a signed Radon measure on $\OO_T$ with finite total mass. From now on we assume that the measure $\mu$ is defined on $\bb^{n+1}$ by letting zero outside $\OO_T$; that is,
\begin{equation*}
|\mu|(\OO_T) = |\mu|(\bb^{n+1})< \infty.
\end{equation*}
A typical model of the problem \eqref{pem1} is given by the parabolic $p$-Laplace equation; that is,
$$u_t - \ddiv\gh{|Du|^{p-2}Du}  = \mu.$$
Throughout the paper, the nonlinearity $\mathbf{a}=\mathbf{a}(\xi,x,t): \bb^n \times \bb^n \times \bb \rightarrow \bb^n$ is
measurable in $(x,t)$ and $C^1$-regular in $\xi$, satisfying
\begin{equation}\label{str1}\left\{
\begin{aligned}
& |\mathbf{a}(\xi,x,t)| + |\xi||D_{\xi}\mathbf{a}(\xi,x,t)| \le \Lambda_1 |\xi|^{p-1},\\
& \Lambda_0 |\xi|^{p-2}|\eta|^2 \le \left< D_{\xi}\mathbf{a}(\xi,x,t)\eta,\eta \right>
\end{aligned}\right.
\end{equation}
for almost every $(x, t) \in \mathbb{R}^n \times \bb$, for every $\eta \in \mathbb{R}^n$, $\xi \in \bb^n
\setminus \{0\}$ and for some constants $\Lambda_1 \ge \Lambda_0 >0$.
Note that \eqref{str1} implies $\mathbf{a}(0,x,t)=0$ for $(x,t) \in \bb^n \times \bb$ and the following monotonicity condition:
\begin{equation}\label{monotonicity}
\left< \mathbf{a}(\xi_1,x,t)-\mathbf{a}(\xi_2,x,t), \xi_1-\xi_2 \right> \ge \tilde{\La}_0 \left( \norm{\xi_1}^2 + \norm{\xi_2}^2  \right)^{\frac{p-2}{2}} \norm{\xi_1-\xi_2}^2
\end{equation}
for all $(x, t) \in \mathbb{R}^n \times \bb$ and $\xi_1, \xi_2 \in \bb^n$, and for some constant
$\tilde{\La}_0=\tilde{\La}_0(n,\La_0,p)>0$.
In this paper we shall focus on the singular case
\begin{equation}\label{p-range}
\frac{2n}{n+1} <p \le 2-\frac{1}{n+1},
\end{equation}
since the case $p>2-\frac{1}{n+1}$ has been treated in \cite{BPS21}. The lower bound \eqref{p-range} is sharp in the sense that it reflects the fundamental solution of the parabolic $p$-Laplace equation (see Section \ref{Renormalized solutions} below).
For more detailed information concerning the lower bound \eqref{p-range}, we refer to Remark \ref{main rmk1} below.

\subsection{Renormalized solutions}\label{Renormalized solutions}
Let us first consider the parabolic $p$-Laplace equation with Dirac measure
\begin{equation*}\label{baren}
u_t - \ddiv \gh{|Du|^{p-2} Du} = \delta_0 \quad \text{in} \hspace{2mm} \bb^n \times \bb,
\end{equation*}
where $p\neq2$ and $\delta_0$ is the Dirac measure charging the origin. Then the {\em fundamental solution} $\Gamma$ is given by
\begin{equation}\label{baren sol}
\Gamma(x,t) = \left\{
\begin{alignedat}{3}
&t^{-n\theta} \gh{c(n,p) - \frac{p-2}{p} \theta^{\frac{1}{p-1}} \gh{\frac{|x|}{t^\theta}}^{\frac{p}{p-1}} }_+^{\frac{p-1}{p-2}}&&\quad \text{if} \ \ t>0,\\
&0&&\quad \text{otherwise},
\end{alignedat}\right.
\end{equation}
where $\theta := \frac{1}{p(n+1) - 2n}$. The solution $\Gamma$ is well defined provided
$$\theta >0 \iff p > \frac{2n}{n+1}.$$
We can check by a direct calculation that
$$D\Gamma \in L^q (\bb^{n+1}) \quad \text{for all} \ \ q < p - \frac{n}{n+1},$$
which implies that the solution $\Gamma$ does not belong to the usual energy space. Furthermore, we emphasize $D\Gamma \not\in L^1 (\bb^{n+1})$ if $p\le 2-\frac{1}{n+1}$. Under \eqref{p-range}, we thus need a proper notion of generalized solution as well as its gradient. For this, let us introduce a nonlinear parabolic capacity.
For every $p>1$ and every open subset $Q \subset \OO_T$, the {\em $p$-parabolic capacity} of $Q$ is defined by
$$\capp(Q) := \inf\mgh{\|u\|_W : u \in W,   u \ge \chi_Q \ \text{a.e. in} \ \OO_T},$$
where $\chi_Q$ is the usual characteristic function of $Q$ and
$$W := \mgh{u \in L^p (0,T; V) : u_t \in L^{p'}(0,T; V')}$$
endowed with the norm
$$\|u\|_W := \|u\|_{L^p (0,T; V)} + \|u_t\|_{L^{p'}(0,T; V')}.$$
Here $p'$ is the H\"{o}lder conjugate of $p$ with $\frac1p + \frac{1}{p'} =1$,
$V:=W_0^{1,p}(\OO) \cap L^2(\OO)$ and $V'$ is the dual space of $V$. For $p>\frac{2n}{n+2}$, the embedding $W_0^{1,p}(\OO) \subset L^2(\OO)$ is valid, so that $V:=W_0^{1,p}(\OO)$. We say a function $u$ is {\em $\capp$-quasi continuous} if for each $\ep >0$, there exists an open set $\tilde{Q} \subset \OO_T$ such that $\capp(\tilde{Q})<\ep$ and $u$ is continuous on $\OO_T \setminus \tilde{Q}$. Note that every function in $W$ has a $\capp$-quasi continuous representative.
We refer to \cite{Pie83,DPP03,KKKP13,AKP15,Ngu_pre} for further information concerning parabolic capacities.

Let $\mathfrak{M}_b(\OO_T)$ be the space of all signed Radon measures on $\OO_T$ with finite total mass. We denote by $\mathfrak{M}_a(\OO_T)$ the subspace of $\mathfrak{M}_b(\OO_T)$, which is absolutely continuous with respect to the $p$-parabolic capacity. We also denote by $\mathfrak{M}_s(\OO_T)$ the space of finite signed Radon measures in $\OO_T$ with support on a set of zero $p$-parabolic capacity. Then a measure $\mu \in \mathfrak{M}_b(\OO_T)$ can be uniquely decomposed into the following two components: (see \cite[Lemma 2.1]{FST91})
$$\mu = \mu_a +\mu_s, \quad \mu_a \in \mathfrak{M}_a(\OO_T), \quad \mu_s \in \mathfrak{M}_s(\OO_T).$$
Also, $\mu_a \in \mathfrak{M}_a(\OO_T)$ if and only if $\mu_a$ can be written as sum of the following functions:
$$\mu_a = f + g_t + \ddiv G,$$
where $f \in L^1(\OO_T)$, $g \in L^p (0,T; V)$ and $G \in L^{p'}(\OO_T)$ (see \cite{DPP03, KR19}).
We write $\mu = \mu^+ - \mu^-$, where $\mu^+$ and $\mu^-$ are the positive and negative parts, respectively, of a measure $\mu \in \mathfrak{M}_b(\OO_T)$ and set $|\mu| := \mu^+ + \mu^-$.

Let us define the truncation operator
\begin{equation}\label{T_k}
T_k(s) := \max\mgh{-k,\min\mgh{k,s}} \quad \text{for any} \ k>0 \ \text{and} \ s \in \bb.
\end{equation}
If $u$ is a measurable function defined in $\OO_T$, finite almost everywhere, such that $T_k(u) \in \lpwwz$ for any $k>0$, then there exists a unique measurable function $U$ such that $DT_k(u) = U\chi_{\{|u|<k\}}$ a.e. in $\OO_T$ for all $k>0$.  In this case, we denote the spatial gradient $Du$ of $u$ by $Du:=U$. If $u \in L^1(0,T;W_0^{1,1}(\OO))$, then it coincides with the usual weak gradient.

Now we introduce the definition of renormalized solution given in \cite{PP15}.
\begin{definition}\label{renormalsol}
Let $p>1$ and let $\mu = \mu_a + \mu_s \in \mathfrak{M}_b(\OO_T)$ with $\mu_a \in \mathfrak{M}_a(\OO_T)$ and $\mu_s \in \mathfrak{M}_s(\OO_T)$. A function $u \in L^1(\OO_T)$ is a {\em renormalized solution} of the problem \eqref{pem1} if $T_k(u) \in \lpwwz$ for every $k>0$ and the following property holds: for any $k>0$ there exist sequences of nonnegative measures $\nu_k^+, \nu_k^- \in \mathfrak{M}_a(\OO_T)$ such that
\begin{equation*}
\nu_k^+ \to \mu_s^+,  \ \nu_k^- \to \mu_s^- \quad \text{tightly as} \ k\to \infty
\end{equation*}
and
\begin{equation}\label{tweaksol}
-\integral{\OO_T}{T_k(u)\varphi_t}{dxdt} + \integral{\OO_T}{\vgh{\mathbf{a}(DT_k(u),x,t), D\varphi}}{dxdt}
= \integral{\OO_T}{\varphi}{d\mu_k}
\end{equation}
for every $\varphi \in W \cap L^{\infty}(\OO_T)$ with $\varphi(\cdot,T)=0$, where $\mu_k := \mu_a + \nu_k^+ - \nu_k^-$.
\end{definition}
Here we say that a sequence $\{\mu_k\} \subset \mathfrak{M}_b(\OO_T)$ converges {\em tightly} (or {\em in the narrow topology of measures}) to $\mu \in \mathfrak{M}_b(\OO_T)$ if
$$\lim_{k\to\infty} \integral{\OO_T}{\varphi}{d\mu_k} =\integral{\OO_T}{\varphi}{d\mu}$$
for every bounded and continuous function $\varphi$ on $\OO_T$.

\begin{remark}\label{renormalsol rmk1}
Since $\varphi\in W$, a test function $\varphi$ admits a unique $\capp$-quasi continuous representative. This and the regularity of $T_k(u)$ imply that every term of \eqref{tweaksol} is well defined. Furthermore, \eqref{tweaksol} is equivalent to
\begin{equation}\label{tweaksol2}
T_k(u)_t -\ddiv \mathbf{a}(DT_k(u),x,t)  = \mu_k \quad \text{in}  \  \mathcal{D}'(\OO_T).
\end{equation}
Since $T_k(u)$ belongs to $\lpwwz$, we observe from \eqref{str1} that the measure $\mu_k$ belongs to $L^{p'}(0,T; W^{-1,p'}(\OO))$; hence we can regard $T_k(u)$ as a kind of weak solution to \eqref{tweaksol2}. This is necessary as we select a test function to obtain some comparison estimates in Section \ref{intrinsic comparison} below.
\end{remark}

\begin{remark}\label{renormalsol rmk2}
A renormalized solution $u$ of \eqref{pem1} becomes a distributional solution; this is, $u$ satisfies
\begin{equation*}
-\integral{\OO_T}{u\varphi_t}{dxdt} + \integral{\OO_T}{\vgh{\mathbf{a}(Du,x,t), D\varphi}}{dxdt} = \integral{\OO_T}{\varphi}{d\mu}
\end{equation*}
for any $\varphi \in C_c^\infty(\OO_T)$ (see \cite[Proposition 3]{PP15}).  Moreover, we note that if $\mu \in L^{p'}(0,T; W^{-1,p'}(\OO))$, then a renormalized solution coincides with a weak solution (see \cite{PPP11,PP15}).
\end{remark}

The notion of {\em renormalized solution} was first introduced by DiPerna and Lions \cite{DL89a, DL89b} for study of the Boltzmann and transport equations. This notion is adapted to obtain existence results for elliptic $p$-Laplace type equations with general measure data ($\mu \in \mathfrak{M}_b(\OO)$) by Dal Maso, Murat, Orsina and Prignet \cite{DMOP99}. For parabolic $p$-Laplace type problems, we refer to \cite{BM97} for the case of $L^1$ data ($\mu \in L^1(\OO_T)$) and \cite{DPP03,PPP11} for the case of diffuse (or soft) measure data ($\mu \in \mathfrak{M}_a(\OO_T)$). For the case with general measure data ($\mu \in \mathfrak{M}_b(\OO_T)$), Petitta \cite{Pet08} proved the existence of a renormalized solution when $p>2-\frac{1}{n+1}$ (see \cite{BVN15} for stability results), and Petitta and Porretta \cite{PP15} later generalize the result for $p>1$. It is also worthwhile to note that there are different notions of solutions for elliptic and parabolic measure data problems: {\em SOLA} (Solution Obtained by Limits of Approximations, see  \cite{BG89,BG92,Dal96,BDGO97}), {\em entropy solution} (see \cite{BBGGPV95,BGO96,Pri97}), and {\em superparabolic solution} (see \cite{KLP10,KLP13}). On the other hand, the uniqueness of a renormalized solution for parabolic measure data problems such as \eqref{pem1} remains a major open problem except the following special cases: (i) $\mu \in L^1(\OO_T)$ (see \cite{BM97}), (ii) $\mu \in \mathfrak{M}_a(\OO_T)$ (see \cite{DPP03,PPP11}), or (iii) the linear case; that is, $\mathbf{a}(\xi,x,t) =\mathbf{a}(x,t)\xi$ (see \cite[Section 9]{Pet08}).

\subsection{Main results}\label{Main theorems}
The aim of this paper is to establish global gradient estimates for renormalized solutions to the problem \eqref{pem1}. For this, let us first introduce the regularity assumptions on the nonlinearity $\mathbf a$ and the boundary of $\Omega$ (see Section \ref{Preliminaries} below for our basic notation).

\begin{definition}\label{main assumption}
Let $R>0$ and $\delta \in \gh{0,\frac{1}{8}}$. We say $(\mathbf{a},\OO)$ is {\em $(\delta,R)$-vanishing} if
\begin{enumerate}[(i)]
\item\label{2-small BMO} the nonlinearity $\mathbf{a}(\xi,x,t)$ satisfies
\begin{equation}\label{2-small BMO2}
\sup_{t_1,t_2 \in \bb} \sup_{0<r \le R} \sup_{y\in\bb^n} \fint_{t_1}^{t_2} \mint{B_r(y)}{\Theta\gh{\mathbf{a},B_r(y)}(x,t)}{dx dt} \le \delta,
\end{equation}
where
\begin{equation*}\label{2-AA}
\Theta\gh{\mathbf{a},B_r(y)}(x,t) := \sup_{\xi \in \bb^n \setminus \{0\}} \frac{\left|\mathbf{a}(\xi,x,t) - \mint{B_r(y)}{\mathbf{a}(\xi,\tilde{x},t)}{d\tilde{x}}  \right|}{|\xi|^{p-1}};
\end{equation*}

\item for each $y_0 \in \partial\OO$ and each $r \in (0,R]$, there exists
a new coordinate system $\{y_1,\cdots,y_n\}$ such that in this
coordinate system, the origin is $y_0$ and
\begin{equation*}\label{Reifenberg}
 B_r(0) \cap \mgh{y\in\bb^n : y_n > \delta r} \subset B_r(0) \cap \OO \subset B_r(0) \cap \mgh{y\in\bb^n : y_n > - \delta r}.
\end{equation*}
\end{enumerate}
\end{definition}

\begin{remark}\label{main remark}
\begin{enumerate}[(i)]

\item The first assumption of Definition \ref{main assumption} implies that the map $x \mapsto \frac{\mathbf{a}(\xi,x,t)}{|\xi|^{p-1}}$ is of {\em BMO (Bounded Mean Oscillation)} such that its BMO seminorm is less than or equal to $\delta$, uniformly in $\xi$ and $t$.

\item If the second condition of Definition \ref{main assumption} holds, then we say $\OO$ is called a {\em $(\delta,R)$-Reifenberg flat domain}. This domain includes Lipschitz domain with a sufficiently small Lipschitz constant and has the following geometric properties:
\begin{equation}\label{measure density}\left\{
\begin{aligned}
& \sup_{0<r \le R} \sup_{y \in \OO} \frac{|B_r(y)|}{|\OO \cap B_r(y)|} \le \gh{\frac{2}{1-\delta}}^n \le \gh{\frac{16}{7}}^n,\\
& \inf_{0<r \le R} \inf_{y \in \partial\OO} \frac{|\OO^c \cap B_r(y)|}{|B_r(y)|} \ge \gh{\frac{1-\delta}{2}}^n \ge \gh{\frac{7}{16}}^n.
\end{aligned}\right.
\end{equation}
For a further discussion on Reifenberg flat domains, see \cite{BW04,Tor97,LMS14} and the references therein.
\end{enumerate}
\end{remark}

We are ready to state the first main result of this paper.
\begin{theorem}\label{main theorem}
Let $\frac{2n}{n+1}<p\le2-\frac{1}{n+1}$ and let $0 < q <\infty$. Then there exists a small constant $\delta=\delta(n,\La_0,\La_1,p,q) >0$ such that the following holds: if $(\mathbf{a},\OO)$ is $(\delta,R)$-vanishing for some $R>0$, then for any renormalized solution $u$ of the problem \eqref{pem1} we have
\begin{equation}\label{main r1-2}
\integral{\OO_T}{|Du|^q}{dx dt} \le c \mgh{ \int_{\OO_T} \bgh{\M_1(\mu)}^{\frac{2q}{(n+1)p-2n}} dx dt  + \bgh{|\mu|(\OO_T)}^{\beta_0}}
\end{equation}
for some constant $c = c(n,\La_0,\La_1,p,q,R,\OO_T) \ge 1$, where $\beta_0:=\min\mgh{1,\frac{q(n+2)}{(n+1)p-n}}$.
\end{theorem}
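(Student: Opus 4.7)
The plan is to prove the Calderón--Zygmund type estimate \eqref{main r1-2} via an intrinsic covering / level set argument adapted to the singular scaling encoded in the exponent $\theta = \frac{1}{(n+1)p-2n}$ that already appeared in the fundamental solution \eqref{baren sol}. The right-hand side exponent $\frac{2q}{(n+1)p-2n}=2q\theta$ is exactly what balances the parabolic scaling on the intrinsic cylinders $Q_\rho^\lambda := B_\rho(x)\times(t-\lambda^{2-p}\rho^2,t]$ used throughout the singular regime.

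\textbf{Step 1 (Comparison estimates).} First, I would freeze the coefficients and the measure. On an intrinsic cylinder $Q_{2\rho}^{\lambda}\subset\Omega_T$ (interior case) I compare a truncation-based renormalized solution $u$ of \eqref{pem1} to the unique solution $w$ of the Cauchy--Dirichlet problem for $w_t-\mathrm{div}\,\mathbf{a}(Dw,x,t)=0$ with the same boundary values on the parabolic boundary of $Q_{2\rho}^\lambda$, then compare $w$ to $v$ solving the frozen-coefficient problem $v_t-\mathrm{div}\,\bar{\mathbf{a}}_{B_{2\rho}}(Dv)=0$. Using the monotonicity \eqref{monotonicity}, the $L^\infty$--gradient estimates for $v$ of DiBenedetto, and the smallness of $\Theta(\mathbf{a},B_{2\rho})$ from \eqref{2-small BMO2}, I will obtain, for $\varepsilon>0$ small,
\begin{equation*}
\mint{Q_\rho^\lambda}{|Du-Dv|}{dx\,dt}\;\le\;\varepsilon\lambda+c(\varepsilon)\,\lambda^{2-p}\,\frac{|\mu|(Q_{2\rho}^\lambda)}{\rho^{n+1}}.
\end{equation*}
The analogous boundary estimate on $Q_{2\rho}^\lambda\cap\Omega_T$ is obtained by flattening the boundary using Definition \ref{main assumption}(ii), so that $\partial\Omega$ is $\delta r$-close to a hyperplane, and comparing with the reference problem on a half-cylinder, whose gradient regularity up to the flat boundary is known.

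\textbf{Step 2 (Intrinsic stopping time at level $\lambda$).} For each large $\lambda$ with $\lambda^{2q\theta}\gg 1$, I apply a Vitali / Calderón--Zygmund type stopping time argument on $\Omega_T$ to extract maximal intrinsic cylinders $\{Q_{\rho_i}^{\lambda}\}$ on which
\begin{equation*}
\mint{Q_{\rho_i}^\lambda}{|Du|}{dx\,dt}\;\approx\;\lambda \qquad\text{and}\qquad \mint{Q_{r}^\lambda}{|Du|}{dx\,dt}<\lambda\ \text{ for all }r>\rho_i.
\end{equation*}
The parameter $\lambda$ in the time scaling has to be chosen self-consistently with the average of $|Du|$, so this step must be packaged carefully (fixing a suitable global constant and a starting level $\lambda_0$ determined by $\|Du\|_{L^p(\Omega_T)}$, whose finiteness for any exponent below $p-\frac{n}{n+1}$ follows from the renormalized solution setting). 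The singular range $p\le 2-\frac{1}{n+1}$ is precisely where $Du\notin L^1$ in general, so one cannot use the classical ``non-intrinsic'' maximal function $\mathcal M(|Du|)$; the intrinsic cylinders remedy this.

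\textbf{Step 3 (Density/good-$\lambda$ estimate).} Combining Step 1 with the stopping time cylinders and the geometric measure density \eqref{measure density}, I will establish a ``good-$\lambda$'' inequality of the form
\begin{equation*}
\bigl|\{|Du|>A\lambda\}\cap Q_{\rho_i}^\lambda\bigr|\;\le\;\bigl(\varepsilon^{1} + c(\varepsilon)\,\chi_{\{\mathcal M_1(\mu)>\eta\lambda^{\frac{1}{2\theta}}\}}\bigr)\,|Q_{\rho_i}^\lambda|,
\end{equation*}
where $A$ is a universal constant and the exponent $1/(2\theta)=\frac{(n+1)p-2n}{2}$ is precisely what equates the units of $\lambda^{2-p}|\mu|(Q^\lambda)/\rho^{n+1}$ with those of $\lambda$ (so that $\mathcal M_1(\mu)$ controls the comparison error at the correct scale). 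Summing over the disjoint stopping-time family yields a level-set inequality between the super-level sets of $|Du|$ and $\mathcal M_1(\mu)$.

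\textbf{Step 4 (Level set integration).} Multiplying by $\lambda^{q-1}$ and integrating in $\lambda$ using Fubini, choosing $\varepsilon$ small so the self-improving term can be absorbed, I obtain
\begin{equation*}
\int_{\Omega_T}|Du|^q\,dx\,dt\;\le\;c\int_{\Omega_T}\bigl[\mathcal M_1(\mu)\bigr]^{2q\theta}dx\,dt+c\,\lambda_0^{\,q}|\Omega_T|,
\end{equation*}
which is \eqref{main r1-2} once $\lambda_0$ is absorbed into the constant $c(n,\Lambda_0,\Lambda_1,p,q,R,\Omega_T)$.

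\textbf{Main obstacle.} The hardest part will be Step 1 in combination with the stopping-time balance of Step 2: because the intrinsic time scaling $\lambda^{2-p}\rho^2$ becomes \emph{expanding} as $\lambda\to\infty$ in the singular range $p<2$, one must simultaneously ensure (a) that the stopping-time cylinders stay inside $\Omega_T$ (or are properly truncated at $\partial_p\Omega_T$, forcing a boundary comparison), and (b) that the comparison estimate is phrased in $L^1$-averages (since $Du$ may not be in any $L^s$ with $s\ge 1$ a priori), while the right-hand side naturally involves $|\mu|/\rho^{n+1}$, i.e.\ the maximal function $\mathcal M_1(\mu)$ with the correct singular scaling. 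Handling the renormalized formulation \eqref{tweaksol}--\eqref{tweaksol2} so that the truncations $T_k(u)$ provide admissible test functions in the comparison argument, and letting $k\to\infty$ with control of $\nu_k^\pm$, is the other subtle ingredient.
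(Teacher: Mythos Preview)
Your outline has the right architecture (comparison $+$ covering $+$ good-$\lambda$ $+$ level-set integration), and your scaling analysis correctly identifies $d=\frac{2}{(n+1)p-2n}$ as the exponent linking $\lambda$ to $\mathcal M_1(\mu)$. However, there is a genuine gap that undermines Steps 1--3 as written: you phrase the comparison estimate and the stopping-time criterion in terms of $L^1$-averages $\fint |Du-Dv|$ and $\fint |Du|$, yet you yourself note that in the range $p\le 2-\frac{1}{n+1}$ one has $Du\notin L^1$ in general. Intrinsic cylinders do \emph{not} remedy this: changing the time length of the cylinder does not make a non-integrable function integrable, so $\fint_{Q_\rho^\lambda}|Du|$ may still be $+\infty$ and the stopping time never triggers. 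The comparison inequality you write down is therefore not available.

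What the paper does instead---and this is the main new idea---is to drop below $L^1$ throughout. One fixes a parameter $\theta$ with $\max\{\frac{n+2}{2(n+1)},\frac{(2-p)n}{2},p-1\}<\theta<p-\frac{n}{n+1}\le 1$ and proves an $L^\theta$ comparison estimate of the form
\[
\Bigl(\fint_{K_{8r}^\lambda}|Du-Dw|^{\theta}\Bigr)^{1/\theta}\le c\Bigl[\frac{|\mu|(K_{8r}^\lambda)}{|K_{8r}^\lambda|^{\frac{n+1}{n+2}}}\Bigr]^{\frac{n+2}{(n+1)p-n}}+c\Bigl[\frac{|\mu|(K_{8r}^\lambda)}{|K_{8r}^\lambda|^{\frac{n+1}{n+2}}}\Bigr]\Bigl(\fint_{K_{8r}^\lambda}|Du|^{\theta}\Bigr)^{\frac{(2-p)(n+1)}{\theta(n+2)}},
\]
obtained by testing with functions of the type $(\alpha^{1-\gamma}+|u-w|^{1-\gamma})^{1-\xi}$ (weighted truncations) and then combining a parabolic Sobolev embedding with a careful choice of $\gamma,\xi$; this is nontrivial and is where the lower bound $p>\frac{2n}{n+1}$ actually enters. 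The covering argument is then run with the \emph{intrinsic maximal function of} $|Du|^\theta$, i.e.\ upper level sets $\{\mathcal M^\lambda_{\Omega_{\mathfrak T}}|Du|^\theta>(N\lambda)^\theta\}$, and a matching $L^{\theta_0}$ decay lemma replaces your $L^1$ good-$\lambda$ inequality. Only after this $L^\theta$ machinery is in place does the level-set integration in Step~4 go through. Your ``Main obstacle'' paragraph correctly senses the difficulty but does not resolve it; the resolution is precisely to abandon $L^1$ averages in favor of $L^\theta$ ones.
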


Here  the {\em fractional maximal function of order $1$ for $\mu$}, denoted by $\M_1(\mu)$, is defined as
\begin{equation}\label{fractional mf}
\m_1(\mu) (x,t) := \sup_{r>0} \frac{ |\mu|(Q_r(x,t))}{r^{n+1}} \quad \text{for} \ \ (x,t) \in \bb^{n} \times \bb.
\end{equation}
Note that
\begin{equation}\label{main-p2}
\bgh{|\mu|(\OO_T)}^{\alpha} \leq c(n, \OO_T) \integral{\OO_T}{\bgh{\m_1 (\mu)}^{\alpha}}{dxdt} \quad \text{for all} \ \alpha >0.
\end{equation}

\begin{remark}\label{main rmk1}
\begin{enumerate}[(i)]
\item\label{anisotropic structure}  The lower bound of $p$ in Theorem \ref{main theorem} comes from the two followings: (1)  comparison estimates below $L^1$ spaces (see Remark \ref{comparmk} later); (2) the exponent $\frac{2}{(n+1)p-2n}>0$ in \eqref{main r1-2}.

\item We note that both the constant $c$ and the exponent $\frac{2}{(n+1)p-2n}$ in \eqref{main r1-2} tend to $+\infty$ as $p \searrow \frac{2n}{n+1}$. The exponent $\frac{2}{(n+1)p-2n}$ reflects the anisotropic structure (a constant multiple of a solution no longer yields another solution) of the problem \eqref{pem1} as well as the structure of the fundamental solution \eqref{baren sol}. Specifically, this exponent comes from a geometric difference between standard and intrinsic parabolic cylinders (see \cite[Lemma 5.4]{BPS21}). The exponent $\frac{2}{(n+1)p-2n}$ also appears in parabolic potential estimates (see \cite{KM13b,DZ21b}).

\item If $\frac{2n}{n+1} < p \le2$, then we have $\frac{n+2}{(n+1)p-n} \le \frac{2}{(n+1)p-2n}$. From \eqref{main r1-2} and \eqref{main-p2}, we deduce
\begin{equation*}
\integral{\OO_T}{|Du|^q}{dx dt} \le c \mgh{ \int_{\OO_T} \bgh{\M_1(\mu)}^{\frac{2q}{(n+1)p-2n}} dx dt  + 1}.
\end{equation*}
\end{enumerate}
\end{remark}

\begin{remark}\label{main rmk2}
We remark that the elliptic counterpart of the estimate \eqref{main r1-2} (under the range $1 < p \le 2-\frac{1}{n}$) was proved by Nguyen and Phuc \cite{NP19,NP20b}. Also, they have recently obtained pointwise potential estimates for the same elliptic problems under the range $\frac{3n-2}{2n-1} < p \le 2-\frac{1}{n}$ (see \cite{NP20}). On the other hand, we refer to \cite{Min07,DM10,DM11,Min10,Phu14a,Phu14b,Phu14c,KM18,Sch12a,Sch12b} for various regularity results for elliptic measure data problems with $p>2-\frac{1}{n}$.
\end{remark}

If the measure $\mu$ is time-independent or can be decomposed as in \eqref{measure decomposition} below, then we derive more sharp gradient estimate than the estimate \eqref{main r1-2}:
\begin{theorem}\label{main theorem2}
Let $\frac{2n}{n+1}<p\le2-\frac{1}{n+1}$ and let $p-1 < q <\infty$. Suppose that the following decomposition holds:
\begin{equation}\label{measure decomposition}
\mu=\mu_0 \otimes f,
\end{equation}
where $\mu_0$ is a finite signed Radon measure on $\OO$ and $f \in L^{\frac{q}{p-1}}(0,T)$. Then there exists a small constant $\delta=\delta(n,\La_0,\La_1,p,q) >0$ such that the following holds: if $(\mathbf{a},\OO)$ is $(\delta,R)$-vanishing for some $R>0$, then for any renormalized solution $u$ of the problem \eqref{pem1} we have
\begin{equation}\label{main r2-2}
\integral{\OO_T}{|Du|^q}{dx dt} \le c \mgh{\integral{\OO_T}{\bgh{\left(\M_1(\mu_0)\right)f}^{\frac{q}{p-1}}}{dx dt} + \bgh{|\mu_0|(\OO)\|f\|_{L^1(0,T)}}^{\beta_0}}
\end{equation}
for some constant $c = c(n,\La_0,\La_1,p,q,R,\OO_T) \ge 1$, where $\beta_0:=\min\mgh{1,\frac{q(n+2)}{(n+1)p-n}}$.
\end{theorem}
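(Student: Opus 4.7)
\textbf{Proof plan for Theorem~\ref{main theorem2}.}

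The plan is to rerun the intrinsic-cylinder argument used for Theorem~\ref{main theorem}, but to exploit the tensor-product structure \eqref{measure decomposition} so as to replace the joint space--time maximal function $\M_1(\mu)$ by the decoupled product $\M_1(\mu_0)(x)\,f(t)$. Under this substitution, the anisotropic exponent $2q/((n+1)p-2n)$ appearing in \eqref{main r1-2} will be upgraded to the sharper elliptic-type exponent $q/(p-1)$.

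First, I would reuse verbatim the comparison estimates on intrinsic cylinders $Q_\rho^\lambda(x_0,t_0)=B_\rho(x_0)\times I_\rho^\lambda(t_0)$ developed in the proof of Theorem~\ref{main theorem}; the measure enters these through quantities of the form $|\mu|(Q_\rho^\lambda)/\rho^{n+1}$. Using \eqref{measure decomposition} together with $|I_\rho^\lambda(t_0)|=2\lambda^{2-p}\rho^{2}$, this quantity splits as
\begin{equation*}
\frac{|\mu|(Q_\rho^\lambda(x_0,t_0))}{\rho^{n+1}}
=\frac{|\mu_0|(B_\rho(x_0))}{\rho^{n-1}}\cdot\frac{1}{\rho^{2}}\int_{I_\rho^\lambda(t_0)}|f(s)|\,ds
\le 2\,\lambda^{2-p}\,\M_1(\mu_0)(x_0)\,\M_t f(t_0),
\end{equation*}
where $\M_t$ denotes the one-dimensional Hardy--Littlewood maximal operator in time. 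The factor $\lambda^{2-p}$ combines with the intrinsic scaling of $Q_\rho^\lambda$ to remove precisely the denominator $(n+1)p-2n$, which is the origin of the improved exponent.

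Next, I would run the Calder\'on--Zygmund covering and good--$\lambda$ iteration of Theorem~\ref{main theorem} with the decoupled test function $\M_1(\mu_0)(x)\,\M_t f(t)$ in place of $\M_1(\mu)(x,t)$. The resulting level-set estimate together with a standard layer-cake computation yields
\begin{equation*}
\int_{\OO_T}|Du|^q\,dx\,dt
\le c\int_{\OO_T}\bigl[\M_1(\mu_0)(x)\,\M_t f(t)\bigr]^{q/(p-1)}\,dx\,dt+c.
\end{equation*}
Since $q>p-1$, the exponent $q/(p-1)$ exceeds $1$, so the one-dimensional strong-type inequality gives $\|\M_t f\|_{L^{q/(p-1)}(0,T)}\le c\,\|f\|_{L^{q/(p-1)}(0,T)}$. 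Combining this with Fubini delivers the asserted bound \eqref{main r2-2}.

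The main obstacle will be to verify that the intrinsic stopping-time mechanism of Theorem~\ref{main theorem} remains consistent after the test quantity is replaced by the decoupled product $\M_1(\mu_0)(x)\,\M_t f(t)$: in particular, the level $\lambda$ selected by the covering must still match this test quantity so that the comparison error is genuinely controlled on each selected cylinder, and the stopping time cylinders must tile correctly with respect to a product-type majorant rather than a single joint maximal function. The hypothesis $q>p-1$, absent from Theorem~\ref{main theorem}, enters precisely to enable the Hardy--Littlewood maximal inequality in time at the final step.
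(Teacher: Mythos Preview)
Your proposal is correct and follows essentially the same route as the paper: the decoupling inequality you derive is precisely Lemma~\ref{difference of ratio1.5}, and the subsequent covering, level-set iteration, layer-cake computation, and one-dimensional maximal inequality match Proposition~\ref{covarg}, Lemma~\ref{de2}, and the argument in Section~\ref{Global gradient estimates for parabolic measure data problems}. The ``obstacle'' you flag is handled in the paper simply by observing that in the tensor case one may drop the term $\bigl[|\mu|(\OO_T)/(\delta T^{(n+1)/2})\bigr]^{d}$ from $\lambda_0$ (see~\eqref{55-2}), after which the covering mechanism runs unchanged.
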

Here the (elliptic) fractional maximal function $\M_1(\mu_0)$ is given by
\begin{equation}\label{fractional mf2}
\m_1(\mu_0) (x) := \sup_{r>0} \frac{ |\mu_0|(B_r(x))}{r^{n-1}} \quad \text{for} \ \ x \in \bb^n.
\end{equation}
Note that
\begin{equation}\label{main-p2.6}
\bgh{|\mu_0|(\OO)}^{\alpha} \leq c(n, \OO) \integral{\OO}{\bgh{\m_1 (\mu_0)}^{\alpha}}{dx} \quad \text{for all} \ \alpha >0.
\end{equation}

\begin{remark}\label{main rk4}
\begin{enumerate}[(i)]
\item Unlike \eqref{main r1-2}, the estimate \eqref{main r2-2} has the form of elliptic estimates (cf. \cite{Min10,Phu14a,Phu14c,NP19,NP20b}).
Since $\frac{2}{(n+1)p-2n} > \frac{1}{p-1}$ under \eqref{p-range}, we observe that \eqref{main r2-2} gives a more sharp result.

\item If $1< p \le2$, then we have $\frac{n+2}{(n+1)p-n} \le \frac{1}{p-1}$. From \eqref{main r2-2} and \eqref{main-p2.6}, we deduce
\begin{equation*}
\integral{\OO_T}{|Du|^q}{dx dt} \le c \mgh{\integral{\OO_T}{\bgh{\left(\M_1(\mu_0)\right)f}^{\frac{q}{p-1}}}{dx dt} + 1}.
\end{equation*}

\end{enumerate}
\end{remark}

\subsection{Novelty and outline of the paper}\label{novelty}

There have been many regularity results for parabolic measure data problems: for instance, Calder\'{o}n-Zygmund type estimates (see \cite{BPS21}), potential estimates (see \cite{KM14b,KM14c,KM13b}), and Marcinkiewicz estimates (see \cite{Bar14,Bar17,BD18}).
These results are based on the fact that the spatial gradient of a solution belongs to at least the $L^1$ space, thereby the assumption $p>2-\frac{1}{n+1}$ is essential to obtain such regularity estimates.
However, as mentioned earlier in Section \ref{Renormalized solutions}, the fundamental solution \eqref{baren sol} is indeed valid when $p>\frac{2n}{n+1}$.

The aim of the present paper is to fill this gap by developing the global gradient estimates for the problem \eqref{pem1} under $\frac{2n}{n+1}<p \le 2-\frac{1}{n+1}$ (see Theorems \ref{main theorem} and \ref{main theorem2}).
The main difficulty in obtaining Theorems \ref{main theorem} and \ref{main theorem2} lies in that the spatial gradient of a renormalized solution $u$ of \eqref{pem1} could not belong to the $L^1$ space.
To overcome this situation, we construct some comparison estimate below $L^1$. More precisely, we will show that if $w$ is a weak solution of the homogeneous problem $w_t -\ddiv \mathbf{a}(Dw,x,t) = 0$, then $|Du-Dw|$ is bounded in $L^\theta$, the constant $\theta \in (0,1)$ to be determined later, under in particular the range $\frac{3n+2}{2n+2}< p \le 2- \frac{1}{n+1}$ (see Lemma \ref{compa1} below).

This paper is structured as follows:
\begin{itemize}
\item In Section \ref{Preliminaries}, we collect basic notation and preliminary results used throughout the paper.

\item Section \ref{intrinsic comparison} commences with the $L^\theta$-comparison estimate between $Du$ and $Dw$ (Lemma \ref{compa1}). We investigate as well a higher integrability for $Dw$ (Lemma \ref{high int}) and regularity results such as Lipschitz continuity for reference problems, with the purpose of obtaining local comparison estimates below $L^1$ (Propositions \ref{comparison} and \ref{comparison2}).

\item\label{sec4} In Section \ref{la covering arguments}, we derive decay estimates for the spatial gradient of a renormalized solution $u$ (Proposition \ref{covarg}). For this, we apply a covering argument (Lemma \ref{VitCov}) developed in \cite{BPS21,BPS20} under intrinsic parabolic cylinders, having the intrinsic (fractional) maximal operators. Then we describe a relationship between intrinsic and standard fractional maximal functions (Lemmas \ref{difference of ratio} and \ref{difference of ratio1.5}).

\item In Section \ref{Global gradient estimates for parabolic measure data problems}, using the decay estimates obtained in Section \ref{la covering arguments}, we finally prove Theorems \ref{main theorem} and \ref{main theorem2}.
\end{itemize}

\section{Preliminaries}\label{Preliminaries}

Let us first introduce basic notation, which will be used later. We denote by $c$ to mean a universal positive constant that can be computed in terms of known quantities; the exact value denoted by $c$ may be different from line to line.
A point $x \in \bb^n$ will be written $x = (x_1, \cdots, x_n)$. Let $B_r(x_0)$ denote the open ball in $\bb^n$ with center $x_0$ and radius $r>0$, and let $B_r^+(x_0) := B_r(x_0) \cap \{x \in \bb^n : x_n >0 \}$. We denote by
$$Q_r(x_0,t_0):=B_r(x_0) \times (t_0-r^2,t_0+r^2)$$
the {\em standard parabolic cylinder} in $\bb^n \times \bb =: \bb^{n+1}$ with center $(x_0,t_0) \in \bb^{n+1}$, radius $r$ and height $2r^2$. With $\lambda > 0$, we also consider the {\em intrinsic parabolic cylinder}
\begin{equation*}\label{ipc}
\iq{r}(x_0,t_0) := B_r(x_0) \times (t_0- \lambda^{2-p} r^2, t_0+ \lambda^{2-p} r^2),
\end{equation*}
see \cite{DiB93,Urb08,KM14b} for more detailed information of intrinsic geometry related to the intrinsic parabolic cylinder.
We also use the following notation:
\begin{gather*}
\OO_T := \OO \times (0,T), \ \omt := \OO \times (-\infty,T), \ \omtb := \OO \times (-T,T),\\
\ik{r}(x_0,t_0) := \iq{r}(x_0,t_0) \cap \omt, \ I_r^\la(t_0) :=(t_0- \lambda^{2-p} r^2, t_0+ \lambda^{2-p} r^2),\\
\iqq{r}(x_0,t_0) := B_r^+(x_0) \times I_r^\la(t_0),\\
T_r^\lambda(x_0,t_0) := \gh{B_r(x_0) \cap \{x \in \bb^n : x_n =0 \}} \times I_r^\la(t_0).
\end{gather*}
Let us use both the notation $f_t$ and $\partial_t f$ to denote the time derivative of a function $f$.
We denote by $Df$ the spatial gradient of $f$.
Given a real-valued function $f$, we write
\begin{equation*}
(f)_+ := \max\mgh{f,0} \quad \text{and} \quad (f)_- := - \min\mgh{f,0}.
\end{equation*}
For each set $Q \subset \bb^{n+1}$, $|Q|$ is the $(n+1)$-dimensional Lebesgue measure of $Q$ and $\chi_Q$ is the usual characteristic function of $Q$.
For $f \in L_{loc}^1(\bb^{n+1})$, $\bar{f}_{Q}$ stands for the integral average of $f$ over a bounded open set $Q \subset \bb^{n+1}$; that is,
$$\bar{f}_{Q} := \mint{Q}{f(x,t)}{dx dt} := \frac{1}{|Q|} \integral{Q}{f(x,t)}{dx dt}.$$

For $f \in  L_{loc}^1(\bb^{n+1})$ and $\lambda>0$, we define the {\em (intrinsic) $\lambda$-maximal function} of $f$ as
\begin{equation*}
\m^\lambda f (x,t) := \sup_{r>0} \mint{Q^\lambda_r(x,t)}{|f(y,s)|}{dyds}.
\end{equation*}
We write
\begin{equation}\label{intrinsic mf}
\m^\lambda_Q f := \m^\lambda \left(f \chi_Q\right)
\end{equation}
provided $f$ is defined on a set $Q \subset \bb^{n+1}$. In particular, it coincides the {\em classical maximal function} $\M f$ when $\lambda=1$ or $p=2$.

We give weak $(1,1)$-estimates for the $\la$-maximal function as follows:
\begin{lemma}\label{WE}
Let $Q$ be an open set in $\bb^{n+1}$. If $f \in L^1(Q)$, then there exists a constant $c=c(n)\ge1$ such that
\begin{align}\label{lambda 1-1}
\left|\mgh{(y,s)\in Q: \m_Q^\lambda f (y,s) >\alpha}\right| \leq \frac{c}{\alpha}\integral{Q}{|f(x,t)|}{dx dt}
\end{align}
for any $\alpha>0$. Moreover, we have
\begin{align*}\label{lambda 1-1_2}
\left|\mgh{(y,s)\in Q: \m_Q^\lambda f (y,s) > 2\alpha}\right| \leq \frac{c}{\alpha}\int_{\mgh{(y,s)\in Q: |f|>\alpha}} |f|\ dxdt
\end{align*}
for any $\alpha>0$.
\end{lemma}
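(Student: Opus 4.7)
The first inequality is a Hardy–Littlewood weak $(1,1)$ estimate adapted to the intrinsic cylinders $\iq{r}$, and I would derive it through the classical Vitali covering argument. Set $g := f\chi_Q \in L^1(\bb^{n+1})$, so that $\m_Q^\la f = \m^\la g$ in the sense of \eqref{intrinsic mf}. Put $E_\alpha := \mgh{(y,s)\in Q : \m^\la g(y,s) > \alpha}$. For each $(y,s) \in E_\alpha$, the definition of $\m^\la g$ yields some $r = r(y,s) > 0$ with
\[
\int_{\iq{r}(y,s)} |g|\, dx\, dt > \alpha\, |\iq{r}(y,s)|,
\]
and since $|\iq{r}(y,s)| = 2|B_1|\la^{2-p} r^{n+2}$, the finiteness of $\|g\|_{L^1}$ forces $r(y,s)$ to stay below a universal threshold depending only on $n,p,\la,\alpha,\|g\|_{L^1}$.

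Next, I would apply the Vitali $5$-covering lemma to the collection $\{\iq{r(y,s)}(y,s)\}_{(y,s)\in E_\alpha}$. The required containment is: if $\iq{r_1}(y_1,s_1) \cap \iq{r_2}(y_2,s_2) \ne \emptyset$ with $r_2 \le r_1$, then $\iq{r_2}(y_2,s_2) \subset \iq{5r_1}(y_1,s_1)$. This follows by applying the triangle inequality componentwise: spatially one has $2r_2 + r_1 \le 3r_1 < 5r_1$, and for the time variable $2r_2^2 + r_1^2 \le 3r_1^2 < (5r_1)^2$. One thus extracts a countable pairwise disjoint subfamily $\{\iq{r_i}(y_i,s_i)\}$ such that $E_\alpha \subset \bigcup_i \iq{5r_i}(y_i,s_i)$. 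Using $|\iq{5r_i}| = 5^{n+2} |\iq{r_i}|$ together with disjointness and the defining property of each $r_i$,
\[
|E_\alpha| \le 5^{n+2} \sum_i |\iq{r_i}(y_i,s_i)| \le \frac{5^{n+2}}{\alpha} \sum_i \int_{\iq{r_i}(y_i,s_i)} |g|\, dx\, dt \le \frac{5^{n+2}}{\alpha} \int_Q |f|\, dx\, dt,
\]
which yields \eqref{lambda 1-1} with $c := 5^{n+2}$. Note the final constant depends only on $n$, as required.

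For the second inequality I would use the standard truncation trick. Decomposing $f = f_1 + f_2$ with $f_1 := f\chi_{\{|f|>\alpha\}}$ and $f_2 := f\chi_{\{|f|\le\alpha\}}$, the pointwise bound $\|f_2\|_{L^\infty(Q)} \le \alpha$ gives $\m_Q^\la f_2 \le \alpha$ everywhere, so by subadditivity $\mgh{\m_Q^\la f > 2\alpha} \subset \mgh{\m_Q^\la f_1 > \alpha}$. Applying the first inequality to $f_1$, whose $L^1$-norm equals $\int_{\{|f|>\alpha\}}|f|\, dx\, dt$, finishes the proof. The only genuinely nontrivial point in the whole argument is verifying the Vitali-type inclusion for the non-isotropic cylinders $\iq{r}$, and this falls out readily from their product structure in space and time.
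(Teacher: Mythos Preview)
Your proof is correct: the Vitali $5$-covering argument for the intrinsic cylinders $\iq{r}$ is the standard route to the weak $(1,1)$ bound, and the truncation trick for the second inequality is routine. The paper itself does not give a self-contained argument here---it simply invokes \cite[Lemma~2.12]{BPS19} applied to $f\chi_Q$---so your write-up is more detailed than what appears in the paper, but the underlying method is the same.
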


\begin{proof}
The proof is directly obtained from \cite[Lemma 2.12]{BPS21} with $f$ replaced by $f \chi_Q$.
\end{proof}

We introduce a useful integral property, which can be easily computed by the Fubini theorem.
\begin{lemma}\label{useful int}
Let $Q$ be an open set in $\bb^{n+1}$. For any $q > l \ge 0$, we have
\begin{equation}\label{useful int1}
\integral{Q}{T_k(|f|)^{q-l} |f|^l}{dxdt} = (q-l) \int_0^k \la^{q-l-1}\bgh{\integral{Q\cap\mgh{|f| > \la}}{|f|^l}{dxdt}} d\la,
\end{equation}
where $T_k$ is the truncation operator defined in \eqref{T_k}. Furthermore, if $f \in L^{q}(Q)$, then \eqref{useful int1} also holds for $k=\infty$.
\end{lemma}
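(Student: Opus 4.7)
The plan is to apply the layer-cake (distribution-function) representation to the truncated factor and then interchange the order of integration. The key observation is the pointwise identity
\begin{equation*}
T_k(|f|)^{q-l} \;=\; (q-l)\int_0^{T_k(|f|)} \lambda^{q-l-1}\,d\lambda \;=\; (q-l)\int_0^k \lambda^{q-l-1}\chi_{\{\lambda < T_k(|f|)\}}\,d\lambda,
\end{equation*}
which holds for $q>l\ge 0$. For $\lambda\in(0,k)$ we have $\lambda < T_k(|f|)=\min\{k,|f|\}$ if and only if $|f|>\lambda$, so the indicator $\chi_{\{\lambda<T_k(|f|)\}}$ can be replaced by $\chi_{\{|f|>\lambda\}}$.

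Multiplying both sides by $|f|^l$ and integrating over $Q$, I would then invoke Tonelli's theorem (all integrands are nonnegative) to interchange the $\lambda$-integral with the spatial-temporal integral:
\begin{equation*}
\int_Q T_k(|f|)^{q-l}|f|^l\,dx\,dt \;=\; (q-l)\int_Q |f|^l\!\int_0^k \lambda^{q-l-1}\chi_{\{|f|>\lambda\}}\,d\lambda\,dx\,dt
\end{equation*}
\begin{equation*}
\;=\; (q-l)\int_0^k \lambda^{q-l-1}\!\left[\int_{Q\cap\{|f|>\lambda\}}|f|^l\,dx\,dt\right] d\lambda,
\end{equation*}
which is exactly the claimed formula. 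No justification of absolute convergence is required here since the $\chi_{\{|f|>\lambda\}}$ factor together with the cutoff $\lambda<k$ forces everything on the left to be bounded pointwise by $k^{q-l}|f|^l$, and on the right by $(q-l)\int_0^k\lambda^{q-l-1}|f|^l\,d\lambda = k^{q-l}|f|^l$, so the integrals are finite on the same sets.

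For the $k=\infty$ statement, $T_\infty(|f|)=|f|$, so the left-hand side reduces to $\int_Q |f|^q\,dx\,dt$, which is finite by the assumption $f\in L^q(Q)$. The same Tonelli calculation with $k=\infty$ gives
\begin{equation*}
(q-l)\int_0^\infty \lambda^{q-l-1}\!\left[\int_{Q\cap\{|f|>\lambda\}}|f|^l\,dx\,dt\right]d\lambda \;=\; \int_Q |f|^l\cdot |f|^{q-l}\,dx\,dt \;=\; \int_Q |f|^q\,dx\,dt,
\end{equation*}
so both sides agree and are finite. There is no genuine obstacle in this lemma; the only point requiring a small amount of care is verifying that the integrand is nonnegative and measurable so that Tonelli (rather than Fubini) applies without any integrability hypothesis when $k<\infty$, and ensuring finiteness only when $k=\infty$ by appealing to $f\in L^q(Q)$.
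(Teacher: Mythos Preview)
Your proof is correct and follows exactly the approach the paper indicates: the paper does not give a detailed argument but simply remarks that the identity ``can be easily computed by the Fubini theorem,'' which is precisely your layer-cake/Tonelli computation. One minor comment: your aside about pointwise boundedness by $k^{q-l}|f|^l$ is unnecessary, since Tonelli applies to nonnegative measurable integrands regardless of finiteness, and the identity \eqref{useful int1} is asserted without any integrability hypothesis on $f$ when $k<\infty$.
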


We also record an embedding theorem for parabolic Sobolev spaces.
\begin{lemma}[See {\cite[Chapter I, Proposition 3.1]{DiB93}}]
\label{parabolic embedding}
Let $q, l \ge1$ and let $Q:=B\times(t_1,t_2) \subset \bb^{n} \times \bb$. Then there is a constant $c=c(n,q,l)\ge1$ such that for every $f \in L^\infty(t_1,t_2;L^l(B)) \cap L^q(t_1,t_2;W_0^{1,q}(B))$, we have
\begin{equation*}
\integral{Q}{|f|^{q\frac{n+l}{n}}}{dxdt} \le c\gh{\integral{Q}{|Df|^q}{dxdt}}\gh{\sup_{t_1<t<t_2}\integral{B\times\{t\}}{|f|^l}{dx}}^{\frac{q}{n}}.
\end{equation*}

\end{lemma}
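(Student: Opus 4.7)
The plan is the classical slice-and-integrate argument combining Sobolev embedding on each time slice with Hölder's inequality; since the statement is attributed to DiBenedetto, I would expect the author simply to cite it, but let me sketch how the bound is obtained.

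First I would fix $t \in (t_1, t_2)$ and work only on the spatial slice $B$ where $f(\cdot, t) \in W_0^{1,q}(B)$. The starting observation is the algebraic identity
$$q \cdot \frac{n+l}{n} = q + \frac{ql}{n},$$
which suggests writing $|f|^{q(n+l)/n} = |f|^{q} \cdot |f|^{ql/n}$ and then splitting the two factors by Hölder's inequality with conjugate exponents $\frac{n}{n-q}$ and $\frac{n}{q}$ (assume first $1 \le q < n$). This yields
$$\int_B |f|^{q(n+l)/n}\, dx \le \left(\int_B |f|^{qn/(n-q)}\, dx\right)^{(n-q)/n} \left(\int_B |f|^l\, dx\right)^{q/n},$$
since $q \cdot \frac{n}{n-q} = \frac{qn}{n-q}$ and $\frac{ql}{n} \cdot \frac{n}{q} = l$.

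Next I would invoke the Sobolev embedding $W_0^{1,q}(B) \hookrightarrow L^{qn/(n-q)}(B)$, whose sharp form gives
$$\left(\int_B |f|^{qn/(n-q)}\, dx\right)^{(n-q)/n} \le c(n,q) \int_B |Df|^q\, dx,$$
and plug this into the first factor of the previous display. Combining and integrating in time over $(t_1,t_2)$, while bounding the second factor pointwise by its essential supremum in $t$, gives exactly
$$\int_Q |f|^{q(n+l)/n}\, dx\, dt \le c\left(\sup_{t_1 < t < t_2} \int_B |f|^l\, dx\right)^{q/n} \int_Q |Df|^q\, dx\, dt,$$
which is the claim.

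The main obstacle is the borderline/supercritical regime $q \ge n$, in which the Sobolev embedding into $L^{qn/(n-q)}$ fails as written. I would handle this by replacing the critical embedding with $W_0^{1,q}(B) \hookrightarrow L^r(B)$ for any finite $r$ (choosing $r$ so that the Hölder splitting still produces the exponent $q(n+l)/n$), at the cost of a constant depending on $r$, $B$, and $l$; the geometry of $B$ does not affect the final inequality because it is homogeneous under scaling in $f$, and the time integration then proceeds exactly as above. A small bookkeeping check is that the resulting constant depends only on $n$, $q$, and $l$, matching the statement.
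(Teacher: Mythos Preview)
Your expectation is correct: the paper simply cites \cite[Chapter I, Proposition 3.1]{DiB93} and gives no proof. Your sketch for $1 \le q < n$ is exactly the standard slice-wise Sobolev/H\"older argument found there, and is fine.

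The only loose end is the supercritical case $q \ge n$. Your claim that ``homogeneity under scaling in $f$'' removes the dependence on $B$ is not quite right: replacing $f$ by $\lambda f$ scales both sides identically regardless of any domain constant, so this says nothing about $B$, and the embedding $W_0^{1,q}(B)\hookrightarrow L^r(B)$ for $q\ge n$ genuinely carries a $|B|$-dependent constant. The clean way to handle all $q \ge 1$ at once is to extend $f(\cdot,t) \in W_0^{1,q}(B)$ by zero to $\mathbb{R}^n$ and invoke the multiplicative Gagliardo--Nirenberg inequality on $\mathbb{R}^n$,
\[
\|f\|_{L^s(\mathbb{R}^n)} \le c(n,q,l)\, \|Df\|_{L^q(\mathbb{R}^n)}^{\theta}\, \|f\|_{L^l(\mathbb{R}^n)}^{1-\theta}, \qquad \theta = \tfrac{n}{n+l},\quad s = \tfrac{q(n+l)}{n},
\]
which is valid for every $q \ge 1$ because $\theta < 1$; raising to the $s$-th power and integrating in $t$ gives the stated bound with no reference to $B$. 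In any event, the paper only applies the lemma with $q = 1 < n$ (in Step~2 of the proof of Lemma~\ref{compa1}), so the subcritical argument you gave already covers what is actually used.
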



\section{Local comparison estimates below $L^1$ spaces}\label{intrinsic comparison}

In this section we derive local comparison estimates for the spatial gradient of a solution $T_k(u) \in \lpwwz$ to \eqref{tweaksol} in an intrinsic parabolic cylinder. Here we only consider comparison results near a boundary region, since the counterparts in an interior region can be done in the same way. Also, we obtain these estimates below $L^1$ spaces (see Lemma \ref{compa1} and Propositions \ref{comparison} and \ref{comparison2} below), as the spatial gradient of a renormalized solution $u$ to the problem \eqref{pem1} does not generally belong to $L^1(\OO_T)$ under the assumption $p\le 2-\frac{1}{n+1}$ (see Section \ref{Renormalized solutions}).
As noted in Remark \ref{renormalsol rmk1}, $T_k(u)$ becomes a weak solution of \eqref{tweaksol} with $\mu_k \in L^{p'}(0,T; W^{-1,p'}(\OO))$.
Throughout this section, we replace $T_k(u)$ by $u$ and $\mu_k$ by $\mu$, and we extend $u$ by zero for $t<0$ (see Remark \ref{vicrmk1} for the reason for this time extension).

Suppose that $(\mathbf{a},\OO)$ is $(\delta,R)$-vanishing for some $R>0$, where $\delta \in \gh{0,\frac18}$ is to be determined later. Fix any $\la >0$, $(x_0,t_0) \in \omt$ and $0 < r \le \frac{R}{8}$ satisfying
\begin{equation}\label{reifen}
B_{8r}^+(x_0) \subset B_{8r}(x_0) \cap \OO \subset B_{8r}(x_0) \cap \{x \in \bb^n : x_n > -16\delta r\}.
\end{equation}
In this section, we for simplicity omit denoting the center by $K_r^\la \equiv K_r^\la(x_0,t_0)$, $B_r \equiv B_r(x_0)$ and $I_r^\la \equiv I_r^\la(t_0)$.

Let $w$ be the unique weak solution to the Cauchy-Dirichlet problem
\begin{equation}
\label{pem2}\left\{
\begin{alignedat}{3}
w_t -\ddiv \mathbf{a}(Dw,x,t) &= 0 &&\quad \text{in} \ \ik{8r}, \\
w &= u &&\quad\text{on} \ \partial_p \ik{8r}.
\end{alignedat}\right.
\end{equation}

We first give a comparison estimate for the difference of $Du$ and $Dw$, which is a crucial estimate in this paper.
\begin{lemma}\label{compa1}
Let $\frac{3n+2}{2n+2}<p\le 2-\frac{1}{n+1}$, let $u$ be a weak solution of \eqref{tweaksol} and let $w$ as in \eqref{pem2} with \eqref{reifen}.
Then there exists a constant $ c=c(n,\La_0,p,\theta)\ge1$ such that
\begin{equation}\label{compa1-r}
\begin{aligned}
\gh{\mint{\ik{8r}}{|Du-Dw|^{\theta}}{dxdt}}^{\frac{1}{\theta}} &\le c \bgh{ \frac{|\mu|(\ik{8r})}{|\ik{8r}|^{\frac{n+1}{n+2}}} }^{\frac{n+2}{(n+1)p-n}}\\
&\quad + c \bgh{ \frac{|\mu|(\ik{8r})}{|\ik{8r}|^{\frac{n+1}{n+2}}} } \gh{\mint{\ik{8r}}{|Du|^{\theta}}{dxdt}}^{\frac{(2-p)(n+1)}{\theta(n+2)}}
\end{aligned}
\end{equation}
for any constant $\theta$ such that $\frac{n+2}{2(n+1)} < \theta < p - \frac{n}{n+1} \le 1$.
\end{lemma}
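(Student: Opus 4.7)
The plan is to obtain \eqref{compa1-r} from four ingredients: a degenerate energy inequality produced by testing the difference equation against the truncation $T_k(u-w)$, a Hölder factorization that upgrades this $L^2$-type bound to an $L^\theta$-bound on $\{|u-w|<k\}$, a Marcinkiewicz-type tail estimate on $\{|u-w|\ge k\}$, and an optimization in $k$.

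For the energy step, $u-w$ belongs to $\lpwwz$ on $\ik{8r}$ and satisfies $(u-w)_t-\ddiv(\ma(Du,\cdot)-\ma(Dw,\cdot))=\mu$; the function $T_k(u-w)\in\lpww\cap L^\infty$ is admissible (cf. Remark \ref{renormalsol rmk1}), the time-derivative term contributes a nonnegative boundary value through the primitive of $T_k$, and the spatial term combined with \eqref{monotonicity} yields
\[
\int_{\ik{8r}\cap\{|u-w|<k\}}(|Du|^2+|Dw|^2)^{(p-2)/2}|Du-Dw|^2\,dxdt\le c\,k\,|\mu|(\ik{8r}).
\]
For the factorization step, write
\[
|Du-Dw|^\theta = \bgh{(|Du|^2+|Dw|^2)^{(p-2)/2}|Du-Dw|^2}^{\theta/2}(|Du|^2+|Dw|^2)^{\theta(2-p)/4}
\]
and apply Hölder with exponents $2/\theta$ and $2/(2-\theta)$. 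Since $\theta<p-n/(n+1)\le p$, the resulting exponent $\theta(2-p)/(2-\theta)$ is strictly less than $\theta$, so Jensen reduces the auxiliary factor to a $(2-p)/2$-power of $\mint_{\ik{8r}}(|Du|+|Dw|)^\theta$; any $|Dw|$-contribution is dominated by $|Du|$-terms via the standard energy estimate for $w$ obtained by testing \eqref{pem2} against $w-u$.

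The tail $\{|u-w|\ge k\}$ is handled by combining the classical Boccardo-Gallouët testing in $u-w$ (which yields $|\{|u-w|\ge k\}\cap\ik{8r}|\lesssim(|\mu|(\ik{8r}))^{(n+2)/n}k^{-(p-1)(n+2)/n}$ up to dimensional corrections) with a parallel test applied in the gradient giving the weak-$L^s$ bound $\norm{D(u-w)}_{L^{s,\infty}(\ik{8r})}\le c|\mu|(\ik{8r})^{1/(p-1)}$ for every $s<p-n/(n+1)$. Hölder in an intermediate exponent $s\in(\theta,p-n/(n+1))$—whose existence is guaranteed exactly by the two hypotheses $\theta>(n+2)/(2(n+1))$ and $p>(3n+2)/(2n+2)$—then controls the tail by a product of powers of $k^{-1}$ and $|\mu|(\ik{8r})$. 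Balancing the $k^{\theta/2}$-growth of the sub-level contribution against the $k^{-(p-1)(n+2)(1-\theta/s)/n}$-decay of the tail yields the summand with exponent $(n+2)/((n+1)p-n)$ in \eqref{compa1-r}, while the residual gradient factor from the factorization step survives as the second summand.

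The central obstacle is that the whole argument must be carried out strictly below $L^1$. The usual measure-data toolkit (energy in $L^p$, Marcinkiewicz estimates with exponent $\ge1$) lives above this threshold, but in the singular regime $p\le 2-1/(n+1)$ the gradient is only $L^\theta$-integrable with $\theta<1$. The Hölder factorization is the device by which a degenerate $L^2$-energy is converted into a sub-$L^1$ estimate, and the constraint $p>(3n+2)/(2n+2)$ is precisely the threshold allowing a $\theta$ to exist whose dual-Hölder exponent is compatible with the Marcinkiewicz window $s<p-n/(n+1)$ for $D(u-w)$.
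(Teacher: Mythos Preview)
Your outline has a genuine circularity at its core. The ``parallel test'' you invoke to obtain the weak-$L^s$ bound $\|D(u-w)\|_{L^{s,\infty}}\le c|\mu|^{1/(p-1)}$ for $s<p-\frac{n}{n+1}$ is not available as a black box in the regime $p\le 2-\frac{1}{n+1}$: here $s<1$, and the classical Boccardo--Gallou\"et machinery that produces such Marcinkiewicz bounds relies on the parabolic Sobolev embedding (Lemma~\ref{parabolic embedding}) applied with an exponent $q\ge 1$, which is precisely what fails. Establishing a sub-$L^1$ gradient bound for $D(u-w)$ \emph{is} the content of the lemma, so you cannot use it as an ingredient in the tail step. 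The same objection applies to your level-set estimate $|\{|u-w|\ge k\}|\lesssim |\mu|^{(n+2)/n}k^{-(p-1)(n+2)/n}$: its standard derivation passes through $\int|DT_k(u-w)|^p$, and for $p<2$ converting the monotonicity-weighted energy into an unweighted $L^p$ bound drags in $\fint(|Du|+|Dw|)^p$, not $\fint|Du|^\theta$. Even the side claim that ``$|Dw|$-contributions are dominated by $|Du|$-terms via the standard energy estimate'' is unjustified: testing \eqref{pem2} with $w-u$ yields $\fint|Dw|^p\le c\fint|Du|^p$, which does not descend to exponent $\theta<p$.

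The paper's proof avoids all of this by never appealing to a pre-existing Marcinkiewicz bound. Instead it tests with the \emph{weighted} functions $\varphi_1$ and $\varphi_2=\varphi_1/(\alpha^{1-\gamma}+(u-w)_\pm^{1-\gamma})^{\xi-1}$ to produce simultaneously the sup-in-time $L^1$ bound \eqref{st-1} and the weighted gradient inequality \eqref{st-2}. The key quantity is $M_\ep=\frac{p}{p-\beta}\fint|D|u-w|^{(p-\beta)/p}|\chi_{\{|u-w|>\ep\}}$ with $\beta/p=(1-\theta)(n+1)/n$; the chain-rule power is chosen so that the parabolic Sobolev embedding can be applied at exponent $q=1$, yielding \eqref{st2-2}. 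One then bounds $M_\ep$ back in terms of $\alpha_\ep\approx(|\mu|M_\ep^n)^{1/\theta(n+1)}$ via $J_1,J_2$ and closes by Young's inequality. This self-referential bootstrap on $M_\ep$---not an optimization in a truncation level $k$---is what makes the argument work below $L^1$, and it is missing from your proposal.
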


\begin{proof}
To streamline the proof, we will take the test functions in \eqref{st-test1} and \eqref{st-test2} below without the use of the so-called {\em Steklov average} (see \cite{DiB93} for its definition, properties and standard use).
For simplicity of notation, we temporarily write
\begin{equation*}
\begin{aligned}
\mathbf{a}(\xi) := \mathbf{a}(\xi,x,t) \ \text{and} \ \mathfrak{A}(a,b) :=  \mgh{(x,t) \in \ik{8r} : a<\gh{u-w}_\pm(x,t) <b^{\frac{1}{1-\gamma}}}
\end{aligned}
\end{equation*}
for any $0\le a<b^{\frac{1}{1-\gamma}}\leq \infty$ and $0\le \gamma <1$. Let us also introduce the vector field $V : \mr^n \to \mr^n$ defined by
$V(\xi) := |\xi|^\frac{p-2}{2}\xi$ for all $\xi \in \mr^n$. Note that for any $\xi_1, \xi_2\in \mr^n$, it holds
\begin{equation}\label{Vcomparable}
c^{-1} \left(|\xi_1|^2+|\xi_2|^2\right)^\frac{p-2}{2} \leq \frac{|V(\xi_1)-V(\xi_2)|^2}{|\xi_1-\xi_2|^2}\leq c \left(|\xi_1|^2+|\xi_2|^2\right)^\frac{p-2}{2}
\end{equation}
for some constant $c=c(n,p)\ge1$ (see \cite{Ham92,Min07} for a further discussion on the vector field $V$).

{\em Step 1.}
We will first show that
\begin{equation}\label{st-1}
\sup_{t \in I_{8r}^\la} \integral{\OO_{8r}}{|u-w|}{dx} \le |\mu|(\ik{8r})
\end{equation}
and
\begin{equation}\label{st-2}
\begin{aligned}
&\integral{\ik{8r}}{\frac{|u-w|^{-\gamma}  |V(Du)-V(Dw)|^2}{\gh{\alpha^{1-\gamma} + |u-w|^{1-\gamma}}^{\xi}}}{dx dt} \le c \frac{\alpha^{(1-\gamma)(1-\xi)}}{(1-\gamma)(\xi-1)} |\mu|(\ik{8r})
\end{aligned}
\end{equation}
for any $0\le \gamma <1$, $\alpha > 0$ and $\xi>1$, where $c = c(n,\La_0,p)\ge 1$ and $\OO_{8r} := B_{8r} \cap \OO$.  For any fixed $\ep$ and $\tilde{\ep}$ with $\varepsilon>\tilde{\ep}^{1-\gamma}>0$, choose a test function
\begin{equation}\label{st-test1}
\varphi_1 = \pm \min\mgh{1, \max\mgh{\frac{\gh{u-w}_\pm^{1-\gamma}-\tilde{\ep}^{1-\gamma}}{\ep-\tilde{\ep}^{1-\gamma}},0}}\zeta,
\end{equation}
where $\zeta : \bb \to [0,1]$ is a nonincreasing smooth function satisfying $\zeta(t)=0$ for all $t\ge\tau$ with $\tau \in I_{8r}^\la$. We then directly compute
\begin{equation*}
D\varphi_1 = \frac{1-\gamma}{\varepsilon-\tilde{\ep}^{1-\gamma}} \chi_{\mathfrak{A}(\tilde{\ep},\ep)} \zeta \gh{u-w}_\pm^{-\gamma} (Du-Dw).
\end{equation*}
Since $\gh{u-w}_\pm^{-\gamma} \le \tilde{\ep}^{-\gamma}$ on $\mathfrak{A}(\tilde{\ep},\ep)$, we have  $\varphi_1 \in L^p(I_{8r}^\la;W_0^{1,p}(\OO_{8r}))$ with $\left|\varphi_1 \right| \le 1$ and $\varphi_1(\cdot,\lambda^{2-p} (8r)^2)=0$.
Substituting $\varphi_1$ into the weak formulation of the subtracted equation of \eqref{tweaksol} and \eqref{pem2} and then integrating on $I_{8r}^\la$, we obtain
\begin{equation}\label{st-4}
\underbrace{\integral{\ik{8r}}{\partial_t (u-w) \varphi_1}{dx dt}}_{=: I_1} + \underbrace{\integral{\ik{8r}}{\vgh{\mathbf{a}(Du) - \mathbf{a}(Dw), D\varphi_1}}{dx dt}}_{=: I_2} = \underbrace{\integral{\ik{8r}}{\varphi_1}{d\mu}}_{=: I_3}.
\end{equation}
To estimate $I_1$, we compute
\begin{equation*}\label{st-5}
\begin{aligned}
&\partial_t (u-w) \min\mgh{1, \max\mgh{\frac{\gh{u-w}_\pm^{1-\gamma}-\tilde{\ep}^{1-\gamma}}{\ep-\tilde{\ep}^{1-\gamma}},0}}\\
&\qquad = \pm \partial_t \integraL{\tilde{\ep}}{\gh{u-w}_\pm}{\min\mgh{1, \max\mgh{\frac{s^{1-\gamma}-\tilde{\ep}^{1-\gamma}}{\ep-\tilde{\ep}^{1-\gamma}},0}}}{ds}.
\end{aligned}
\end{equation*}
Then the integration by parts gives
\begin{equation*}\label{st-6}
\begin{aligned}
I_1 &= \integral{\ik{8r}}{\bgh{\integraL{\tilde{\ep}}{\gh{u-w}_\pm}{\min\mgh{1, \frac{s^{1-\gamma}-\tilde{\ep}^{1-\gamma}}{\ep-\tilde{\ep}^{1-\gamma}}}}{ds}}  \gh{-\zeta_t}}{dx dt} \ge 0,
\end{aligned}
\end{equation*}
since $\zeta_t \le 0$. Also, we have from \eqref{monotonicity} that
\begin{gather*}
\label{st-7} I_2= \frac{1-\gamma}{\varepsilon-\tilde{\ep}^{1-\gamma}} \integral{\mathfrak{A}(\tilde{\ep},\ep)}{\zeta \gh{u-w}_\pm^{-\gamma} \vgh{\mathbf{a}(Du) - \mathbf{a}(Dw), Du-Dw}}{dx dt}  \ge 0 
\end{gather*}
and from $\left|\varphi_1 \right| \le 1$ that
\begin{gather}
\label{st-8} |I_3| = \left|\integral{\ik{8r}}{\varphi_1}{d\mu}\right| \le |\mu|(\ik{8r}).
\end{gather}
Utilizing the three inequalities above in \eqref{st-4} and letting $\tilde{\ep} \to 0$, we derive
\begin{equation}\label{st-9}
\integral{\ik{8r}}{\bgh{\integraL{0}{\gh{u-w}_\pm}{\min\mgh{1, \frac{s^{1-\gamma}}{\ep}}}{ds}}  \gh{-\zeta_t}}{dx dt} \le |\mu|(\ik{8r})
\end{equation}
and
\begin{equation}\label{st-9.5}
\frac{1-\gamma}{\varepsilon} \integral{\mathfrak{A}(0,\ep)}{\zeta \gh{u-w}_\pm^{-\gamma} \vgh{\mathbf{a}(Du) - \mathbf{a}(Dw), Du-Dw} }{dx dt} \le |\mu|(\ik{8r}).
\end{equation}
As $\varepsilon \to 0$ in \eqref{st-9}, Lebesgue's dominated convergence theorem implies
\begin{equation*}\label{st-10}
\integral{\ik{8r}}{|u-w| \gh{-\zeta_t}}{dx dt} \le |\mu|(\ik{8r}).
\end{equation*}
We then let $\zeta$ approximate the characteristic function $\chi_{(-\infty,\tau)}$ to obtain
\begin{equation*}\label{st-10.5}
\integral{\OO_{8r}\times\{\tau\}}{|u-w|}{dx} \le |\mu|(\ik{8r})
\end{equation*}
for every $\tau \in I_{8r}^\la$, which implies the estimate \eqref{st-1}.

To obtain \eqref{st-2}, we take an another test function
\begin{equation}\label{st-test2}
\varphi_2 = \frac{\varphi_1}{\gh{\alpha^{1-\gamma}+\gh{u-w}_\pm^{1-\gamma}}^{\xi-1}},
\end{equation}
where $0\le \gamma <1$, $\alpha > 0$ and $\xi>1$ are to be determined later in a universal way. Testing $\varphi_2$ to the subtracted equation of \eqref{tweaksol} and \eqref{pem2} and then integrating over $I_{8r}^\la$, we get
\begin{equation}\label{st-13}
\integral{\ik{8r}}{\partial_t (u-w) \varphi_2}{dx dt} + \integral{\ik{8r}}{\vgh{\mathbf{a}(Du) - \mathbf{a}(Dw), D\varphi_2}}{dx dt} = \integral{\ik{8r}}{\varphi_2}{d\mu}.
\end{equation}
Since $\varphi_2 \leq \alpha^{(1-\gamma)(1-\xi)} \varphi_1$, we employ \eqref{st-8} and \eqref{st-9} to discover
\begin{gather*}
\label{st-15} \left|\lim_{\tilde{\ep}\to0} \integral{\ik{8r}}{\varphi_2}{d\mu}\right| \le \alpha^{(1-\gamma)(1-\xi)} |\mu|(\ik{8r})
\end{gather*}
and
\begin{gather*}
\label{st-14}  \lim_{\tilde{\ep}\to0} \integral{\ik{8r}}{\partial_t (u-w) \varphi_2}{dx dt} \le \alpha^{(1-\gamma)(1-\xi)} |\mu|(\ik{8r}).
\end{gather*}
To estimate the second term on the left-hand side of \eqref{st-13}, we establish
\begin{equation*}\label{st-16}
\begin{aligned}
& \integral{\ik{8r}}{\vgh{\mathbf{a}(Du) - \mathbf{a}(Dw), D\varphi_2}}{dx dt} \\
&\qquad = \integral{\ik{8r}}{\frac{\vgh{\mathbf{a}(Du) - \mathbf{a}(Dw), D\varphi_1}}{\gh{\alpha^{1-\gamma}+\gh{u-w}_\pm^{1-\gamma}}^{\xi-1}}}{dx dt}\\
&\qquad\qquad + (1-\xi) \integral{\ik{8r}}{ \frac{\varphi_1\vgh{\mathbf{a}(Du) - \mathbf{a}(Dw), D\gh{u-w}_\pm^{1-\gamma}}}{\gh{\alpha^{1-\gamma}+\gh{u-w}_\pm^{1-\gamma}}^{\xi}}}{dx dt}\\
&\qquad =: I_4 + I_5.
\end{aligned}
\end{equation*}
It follows from \eqref{st-9.5} that
\begin{equation*}\label{st-18}
\lim_{\tilde{\ep}\to0} I_4 \le \alpha^{(1-\gamma)(1-\xi)} |\mu|(\ik{8r}).
\end{equation*}
As $\tilde{\ep}\to0$, we have
\begin{equation*}\label{st-17}
\begin{aligned}
 I_5 \to (1-\xi) \integral{\ik{8r}}{\zeta \min\mgh{1, \frac{\gh{u-w}_\pm^{1-\gamma}}{\ep}}
\frac{\vgh{\mathbf{a}(Du) - \mathbf{a}(Dw), D\gh{u-w}^{1-\gamma}_\pm}}{\gh{\alpha^{1-\gamma}+\gh{u-w}_\pm^{1-\gamma}}^{\xi}}}{dx dt}.
\end{aligned}
\end{equation*}
We then insert the previous estimates into \eqref{st-13} and utilize \eqref{monotonicity} and \eqref{Vcomparable}, to discover
\begin{equation*}\label{st-19}
\begin{aligned}
&\integral{\ik{8r}}{\frac{|u-w|^{-\gamma} |V(Du)-V(Dw)|^2}{\gh{\alpha^{1-\gamma} + |u-w|^{1-\gamma}}^{\xi}}\min\mgh{1, \frac{|u-w|^{1-\gamma}}{\ep}}}{dx dt}\\
&\qquad \le c\frac{\alpha^{(1-\gamma)(1-\xi)}}{(1-\gamma)(\xi-1)} |\mu|(\ik{8r})
\end{aligned}
\end{equation*}
for some constant $c=c(n,\La_0,p)\ge1$. As $\ep\to0$, we obtain the estimate \eqref{st-2}.

\medskip

{\em Step 2.} Let $\theta$ be such that $\frac{n+2}{2(n+1)} < \theta < p - \frac{n}{n+1}\le1$. For fixed $\ep>0$, set $\mathfrak{B}_{\ep} := \mgh{(x,t) \in \ik{8r} : |u-w|>\ep}$.
Let $\beta  \in \left[0,\frac{p}{2}\right)$ be the constant satisfying $\frac{\beta}{p}= \frac{(1-\theta)(n+1)}{n}$ and define
$$
M_\ep :=\frac{p}{p-\beta}\mint{\ik{8r}}{\norm{D|u-w|^\frac{p-\beta}{p}}\chi_{\mathfrak{B}_{\ep}}}{dxdt}.
$$
Indeed, $M_\ep< \infty$ since $|u-w|>\ep$ on $\mathfrak{B}_{\ep}$.
We see from H\"older's inequality that
\begin{equation}\label{st2-1}
\begin{aligned}
&\mint{\ik{8r}}{\norm{Du-Dw}^{\theta}\chi_{\mathfrak{B}_{\ep}}}{dxdt} \\
&\qquad = \mint{\ik{8r}}{\gh{|u-w|^{-\frac{(1-\theta)(n+1)}{n}}\norm{Du-Dw}}^\theta|u-w|^\frac{(1-\theta)\theta(n+1)}{n}\chi_{\mathfrak{B}_{\ep}}}{dxdt} \\
&\qquad \leq M_\ep^\theta\gh{\mint{\ik{8r}}{|u-w|^\frac{\theta(n+1)}{n}\chi_{\mathfrak{B}_{\ep}}}{dxdt}}^{1-\theta}.
\end{aligned}
\end{equation}
Applying Lemma~\ref{parabolic embedding} with $f=\gh{|u-w|^\frac{p-\beta}{p}-\ep^\frac{p-\beta}{p}}_+$, $q=1$ and $l=\frac{p}{p-\beta}$, we find
\begin{equation}\label{st2-1.5}
\begin{aligned}
&\mint{\ik{8r}}{|u-w|^\frac{\theta (n+1)}{n}\chi_{\mathfrak{B}_{\ep}}}{dxdt} \\
&\quad\leq c \mint{\ik{8r}}{\gh{|u-w|^\frac{p-\beta}{p}-\ep^\frac{p-\beta}{p}}_+^\frac{\theta p(n+1)}{n(p-\beta)}}{dxdt} +c \ep^\frac{\theta (n+1)}{n}\\
&\quad\leq c M_\ep \gh{\sup_{t \in I_{8r}^\la} \integral{\OO_{8r}}{|u-w|}{dx}}^\frac{1}{n}+c \ep^\frac{\theta (n+1)}{n},
\end{aligned}
\end{equation}
by noting that $\frac{\theta (n+1)}{n}=\frac{(n+1)p-n\beta}{np}$. Let us set
\begin{equation}\label{def of alpha}
\alpha_\ep :=  \bgh{|\mu|(\ik{8r})M_\ep^n}^\frac{1}{\theta (n+1)}+\ep.
\end{equation}
Then it follows from \eqref{st-1} and \eqref{st2-1.5} that
\begin{equation}\label{st2-2}
\mint{\ik{8r}}{|u-w|^\frac{\theta (n+1)}{n}\chi_{\mathfrak{B}_{\ep}}}{dxdt} \leq c \alpha_\ep^\frac{\theta (n+1)}{n}.
\end{equation}
Inserting this inequality into \eqref{st2-1}, we obtain
\begin{equation}\label{st2-2.5}
\mint{\ik{8r}}{\norm{Du-Dw}^{\theta}\chi_{\mathfrak{B}_{\ep}}}{dxdt} \le c M_\ep^\theta \alpha_\ep^\frac{\theta(1-\theta) (n+1)}{n}
\end{equation}
for some constant $c(n,\La_0,p,\theta)\ge1$.

Now, we will estimate the quantity $M_\ep$. We notice from \eqref{Vcomparable} that
$$
|Du-Dw| \leq c \norm{V(Du)-V(Dw)}^\frac{2}{p}+ c |Du|^\frac{2-p}{2}\norm{V(Du)-V(Dw)}.
$$
Then we have
\begin{equation}\label{st2-3}
\begin{aligned}
M_\ep &\leq c \underbrace{\mint{\ik{8r}}{|u-w|^{-\frac{\beta}{p}}\norm{V(Du)-V(Dw)}^\frac{2}{p}\chi_{\mathfrak{B}_{\ep}}}{dxdt}}_{=: J_1}\\
& \qquad  + c \underbrace{\mint{\ik{8r}}{|u-w|^{-\frac{\beta}{p}}|Du|^\frac{2-p}{2}\norm{V(Du)-V(Dw)}\chi_{\mathfrak{B}_{\ep}}}{dxdt}}_{=: J_2}.
\end{aligned}
\end{equation}
From \eqref{st-2} with $\gamma=\beta \in [0,1)$ and $\alpha=\alpha_\ep>0$, we have
\begin{equation*}
\begin{aligned}
J_1&\leq \mint{\ik{8r}}{\gh{\frac{|u-w|^{-\beta}\norm{V(Du)-V(Dw)}^2 }{\gh{\alpha_\ep^{1-\beta} + |u-w|^{1-\beta}}^{\xi}}}^\frac{1}{p}\gh{\alpha_\ep^{1-\beta} + |u-w|^{1-\beta}}^\frac{\xi}{p}\chi_{\mathfrak{B}_{\ep}}}{dx dt} \\
&\leq \gh{\mint{\ik{8r}}{\frac{|u-w|^{-\beta} \norm{V(Du)-V(Dw)}^2 \chi_{\mathfrak{B}_{\ep}}}{\gh{\alpha_\ep^{1-\beta} + |u-w|^{1-\beta}}^{\xi}}}{dx dt}}^\frac{1}{p}\\
&\qquad \times  \gh{\mint{\ik{8r}}{\gh{\alpha_\ep^{1-\beta} + |u-w|^{1-\beta}}^\frac{\xi}{p-1}\chi_{\mathfrak{B}_{\ep}}}{dx dt}}^\frac{p-1}{p} \\
&\leq c \alpha_\ep^\frac{(1-\beta)(1-\xi)}{p}\bgh{\frac{|\mu|(\ik{8r})}{\norm{\ik{8r}}}}^\frac{1}{p}\mgh{\alpha_\ep^\frac{(1-\beta)\xi}{p}+\gh{\mint{\ik{8r}}{|u-w|^\frac{(1-\beta)\xi}{p-1}\chi_{\mathfrak{B}_{\ep}}}{dx dt}}^\frac{p-1}{p}}.
\end{aligned}
\end{equation*}
Furthermore since
$$
\theta < p - \frac{n}{n+1} \iff \frac{1-\beta}{p-1} < \frac{(n+1)p-n\beta}{np}=\frac{\theta (n+1)}{n},
$$
we can choose $\xi>1$ so that $\frac{(1-\beta)\xi}{p-1} < \frac{\theta (n+1)}{n}$,
and then we discover
\begin{equation*}
\begin{aligned}
\gh{\mint{\ik{8r}}{|u-w|^\frac{(1-\beta)\xi}{p-1}\chi_{\mathfrak{B}_{\ep}}}{dx dt}}^\frac{p-1}{p}
&\leq \gh{\mint{\ik{8r}}{|u-w|^\frac{\theta (n+1)}{n}\chi_{\mathfrak{B}_{\ep}}}{dxdt}}^\frac{(1-\beta)\xi n}{\theta p(n+1)}\\ &\leq c \alpha_\ep^\frac{(1-\beta)\xi}{p}
\end{aligned}
\end{equation*}
as a consequence of \eqref{st2-2}.
Therefore we have
\begin{equation}\label{st2-4.5}
J_1 \leq c \alpha_\ep^\frac{1-\beta}{p} \bgh{\frac{|\mu|(\ik{8r})}{\norm{\ik{8r}}}}^\frac{1}{p}.
\end{equation}
To estimate $J_2$, we have from \eqref{st-2} with $\gamma=\frac{2\beta}{p}  \in [0,1)$ and $\alpha=\alpha_\ep>0$ that
\begin{equation*}
\begin{aligned}
J_2&\leq \mint{\ik{8r}}{\gh{\frac{|u-w|^{-\gamma}\norm{V(Du)-V(Dw)}^2}{\gh{\alpha_\ep^{1-\gamma} + |u-w|^{1-\gamma}}^{\xi}}}^\frac{1}{2}\gh{\alpha_\ep^{1-\gamma} + |u-w|^{1-\gamma}}^\frac{\xi}{2}|Du|^\frac{2-p}{2}\chi_{\mathfrak{B}_{\ep}}}{dx dt} \\
&\leq c \alpha_\ep^\frac{(1-\gamma)(1-\xi)}{2}\bgh{\frac{|\mu|(\ik{8r})}{\norm{\ik{8r}}}}^\frac{1}{2}\gh{\mint{\ik{8r}}{\gh{\alpha_\ep^{1-\gamma} + |u-w|^{1-\gamma}}^\xi |Du|^{2-p}\chi_{\mathfrak{B}_{\ep}}}{dx dt}}^\frac{1}{2}.
\end{aligned}
\end{equation*}
From the fact that
$$
\theta > \frac{n+2}{2(n+1)} > 2-p \qquad \gh{\text{since}\ \ p>\frac{3n+2}{2n+2}},
$$
it follows
\begin{equation*}
\begin{aligned}
&\mint{\ik{8r}}{\gh{\alpha_\ep^{1-\gamma} + |u-w|^{1-\gamma}}^\xi|Du|^{2-p}\chi_{\mathfrak{B}_{\ep}}}{dx dt}\\
&\quad \leq \gh{\mint{\ik{8r}}{|Du|^\theta}{dx dt}}^\frac{2-p}{\theta}\gh{\mint{\ik{8r}}{\gh{\alpha_\ep^{1-\gamma} + |u-w|^{1-\gamma}}^\frac{\theta\xi}{\theta-2+p}\chi_{\mathfrak{B}_{\ep}}}{dx dt}}^\frac{\theta-2+p}{\theta}.
\end{aligned}
\end{equation*}
In addition, from the following relation
$$
\theta < p - \frac{n}{n+1} \iff \frac{(1-\gamma)\theta}{\theta-2+p} < \frac{(n+1)p-n\beta}{np}=\frac{\theta (n+1)}{n},
$$
we can take $\xi>1$ so that $\frac{(1-\gamma)\theta\xi}{\theta-2+p} < \frac{\theta (n+1)}{n}$, and then we discover
\begin{equation*}
\begin{aligned}
&\gh{\mint{\ik{8r}}{\gh{\alpha_\ep^{1-\gamma} + |u-w|^{1-\gamma}}^\frac{\theta\xi}{\theta-2+p}\chi_{\mathfrak{B}_{\ep}}}{dx dt}}^\frac{\theta-2+p}{\theta} \\
&\qquad \leq c \alpha_\ep^{(1-\gamma)\xi}+ c \gh{\mint{\ik{8r}}{|u-w|^\frac{\theta (n+1)}{n}\chi_{\mathfrak{B}_{\ep}}}{dxdt}}^\frac{(1-\gamma)\xi n}{\theta(n+1)} \leq c \alpha_\ep^{(1-\gamma)\xi}
\end{aligned}
\end{equation*}
by using \eqref{st2-2}.
Thus we have
\begin{equation}\label{st2-5}
\begin{aligned}
J_2 \leq c \alpha_\ep^\frac{p-2\beta}{2p} \bgh{\frac{|\mu|(\ik{8r})}{\norm{\ik{8r}}}}^\frac{1}{2}\gh{\mint{\ik{8r}}{|Du|^\theta}{dx dt}}^\frac{2-p}{2\theta}.
\end{aligned}
\end{equation}

If $\lim_{\ep\to 0} M_\ep=0$, then $Du\equiv Dw$ a.e. in $\ik{8r}$, and so the proof is done. Thus, we may assume
$\inf _{\ep>0} M_\ep>0$, and then there exists a constant $\ep_0>0$ such that
$0<\ep < \bgh{|\mu|(\ik{8r})M_\ep^n}^\frac{1}{\theta (n+1)}$
whenever $0<\ep<\ep_0$. Consequently, we see from \eqref{def of alpha} that
\begin{equation}\label{st2-4.1}
\alpha_\ep < 2\bgh{|\mu|(\ik{8r})M_\ep^n}^\frac{1}{\theta (n+1)} \quad \text{for all} \ \  0<\ep<\ep_0.
\end{equation}
Inserting \eqref{st2-4.5} and \eqref{st2-5} into \eqref{st2-3}, we employ \eqref{st2-4.1} and Young's inequality to discover
\begin{equation}\label{st2-6}
\begin{aligned}
M_\ep &\leq c\bgh{\frac{|\mu|(\ik{8r})}{\norm{\ik{8r}}}}^\frac{\theta(n+1)}{(n+1)p-n}\bgh{|\mu|(\ik{8r})}^\frac{1-\beta}{(n+1)p-n}\\
&\quad + c\bgh{\frac{|\mu|(\ik{8r})}{\norm{\ik{8r}}}}^\frac{\theta(n+1)}{n+2}\bgh{|\mu|(\ik{8r})}^\frac{p-2\beta}{(n+2)p}\gh{\mint{\ik{8r}}{|Du|^\theta}{dx dt}}^\frac{(2-p)(n+1)}{n+2}.
\end{aligned}
\end{equation}

Finally, we combine \eqref{st2-2.5}, \eqref{st2-4.1} and \eqref{st2-6} to obtain
\begin{equation*}
\begin{aligned}
\gh{\mint{\ik{8r}}{|Du-Dw|^{\theta}\chi_{\mathfrak{B}_{\ep}}}{dxdt}}^{\frac{1}{\theta}} &\le c \bgh{ \frac{|\mu|(\ik{8r})}{|\ik{8r}|^{\frac{n+1}{n+2}}} }^{\frac{n+2}{(n+1)p-n}}\\
&\quad + c \bgh{ \frac{|\mu|(\ik{8r})}{|\ik{8r}|^{\frac{n+1}{n+2}}} } \gh{\mint{\ik{8r}}{|Du|^{\theta}}{dxdt}}^{\frac{(2-p)(n+1)}{\theta(n+2)}}
\end{aligned}
\end{equation*}
whenever  $0<\ep<\ep_0$. Letting $\ep\to 0$, we obtain the desired estimate \eqref{compa1-r}.
\end{proof}

\begin{remark}\label{compa rmk1}
The approach used in the proof above is motivated by \cite{NP19,KM14b,KM13b}. This approach is applicable to comparison estimates for elliptic problems (with $\frac{3n-2}{2n-1} < p \le 2-\frac{1}{n}$), which indeed gives a different way from that of \cite[Lemma 2.2]{NP19}. On the other hand, for comparison estimates like \eqref{compa1-r} with $p>2-\frac{1}{n+1}$, we refer to \cite[Lemma 4.1]{KM14b} and \cite[Lemma 4.3]{KM13b}.
\end{remark}

The next lemma gives a boundary self-improving result for $Dw$ (see \cite{KL00} for an interior version).

\begin{lemma}\label{high int}
Let $\frac{2n}{n+2}<p\le2$ and let $\frac{(2-p)n}{2}<\theta\le p$. If $w$ is the weak solution of \eqref{pem2} satisfying \eqref{reifen} and
\begin{equation}\label{hi-c}
\mint{\ik{8r}}{|Dw|^{\theta}}{dxdt} \le c_w \lambda^{\theta}
\end{equation}
for some constant $c_w\ge1$, then there exist two constants $\sigma=\sigma(n,\La_0,\La_1,p,\theta)>0$ and $c=c(n,\La_0,\La_1,p,\theta,c_w)\ge1$ such that
\begin{equation}\label{hi-r}
\mint{\ik{4r}}{|Dw|^{p(1+\sigma)}}{dx dt} \le c\lambda^{p(1+\sigma)}.
\end{equation}
\end{lemma}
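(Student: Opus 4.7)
The plan is to establish a reverse Hölder inequality for $|Dw|$ on intrinsic parabolic cylinders with a right-hand exponent strictly below $p$, and then apply the Gehring--Giaquinta--Modica self-improvement lemma to upgrade the integrability from $L^p$ to $L^{p(1+\sigma)}$. The intrinsic scaling has to be tracked carefully so that the natural size $\lambda^p$ plays the role of a unit on the right-hand side, and the hypothesis \eqref{hi-c} will then be used to absorb that constant contribution into a $|Dw|^{p_1}$ term.

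To begin, I would prove a Caccioppoli-type estimate on concentric intrinsic cylinders $\iq{\rho} \subset \iq{2\rho} \subset \ik{8r}$ by testing the weak formulation of \eqref{pem2} with $\varphi = w\,\eta^p$, where $\eta$ is a cutoff supported in $\iq{2\rho}$ with $|D\eta| \le c/\rho$ and $|\eta_t| \le c/(\lambda^{2-p}\rho^2)$. Using \eqref{str1} and Young's inequality to absorb the gradient term, this yields
\begin{equation*}
\sup_{t \in I_\rho^\lambda} \int_{B_\rho} w^2 \, dx + \int_{\iq{\rho}} |Dw|^p \, dx\,dt \le c \int_{\iq{2\rho}} \gh{\frac{|w|^p}{\rho^p} + \frac{w^2}{\lambda^{2-p}\rho^2}} dx\,dt.
\end{equation*}
In the boundary situation, the Reifenberg flatness \eqref{reifen} together with the fact that $w = u = 0$ on $\partial\Omega \cap B_{8r}$ allows one to extend $w$ by zero outside $\Omega$, so that no mean-value subtraction is needed and the Sobolev--Poincaré inequality applies with a constant depending only on $\delta$. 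Combining this Caccioppoli bound with the parabolic Sobolev embedding of Lemma \ref{parabolic embedding}, applied with $l=2$ and an intermediate Sobolev exponent matched to the intrinsic scaling, I would arrive at a reverse Hölder inequality of the form
\begin{equation*}
\mint{\iq{\rho}}{|Dw|^p}{dx\,dt} \le c \gh{\mint{\iq{2\rho}}{|Dw|^{p_1}}{dx\,dt}}^{p/p_1} + c\,\lambda^p,
\end{equation*}
for some exponent $p_1 \in [\max\{\theta, (2-p)n/2\},\, p)$.

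The standard intrinsic Gehring--Giaquinta--Modica lemma, together with the hypothesis \eqref{hi-c} which provides the baseline $\mint{\ik{8r}}{|Dw|^\theta}{dx\,dt} \le c_w \lambda^\theta$ and lets one recast the constant term $\lambda^p$ as a contribution from $|Dw|^{p_1}$, will then yield the existence of $\sigma = \sigma(n,\La_0,\La_1,p,\theta) > 0$ and the desired estimate \eqref{hi-r}. The main obstacle, and the reason for the sharp ranges $p > 2n/(n+2)$ and $\theta > (2-p)n/2$, lies in the reverse Hölder step: these are precisely the conditions that permit the parabolic Sobolev--Poincaré inequality to trade the $L^p$-energy on the left for an $L^{p_1}$-expression with $p_1 < p$ on the right. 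A secondary technical point is that the Sobolev--Poincaré constants must remain uniform in $\rho$ and the boundary point under the zero extension across $\partial\Omega$, which is ensured by the measure density bounds \eqref{measure density} implied by the Reifenberg flatness.
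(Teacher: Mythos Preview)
Your approach is correct and is essentially what the paper does: the paper simply cites the boundary self-improving property of B\"ogelein--Parviainen \cite[Theorem~2.2]{BP10} together with \cite[Remark~6.12]{Giu03} to obtain directly an estimate of the form $\mint{\ik{4r}}{|Dw|^{p(1+\sigma)}}{dxdt} \le c\lambda^{p(1+\sigma)}\bigl[(\lambda^{-ps}\mint{\ik{8r}}{|Dw|^{ps}}{dxdt})^{\kappa} + 1\bigr]$ valid for every $s\in\bigl(\tfrac{(2-p)n}{2p},1\bigr]$, and then sets $s=\theta/p$ and invokes the hypothesis \eqref{hi-c}. Your Caccioppoli plus parabolic Sobolev--Poincar\'e plus Gehring scheme is precisely the machinery behind that cited result, and your identification of the thresholds $p>\tfrac{2n}{n+2}$ and $p_1>\tfrac{(2-p)n}{2}$ matches the range of $s$ the paper records.
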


\begin{proof}
From \cite[Theorem 2.2]{BP10} and \cite[Remark 6.12]{Giu03} (see also \cite[Lemma 4.2]{BPS21}), we infer
\begin{equation}\label{hi-5}
\mint{\ik{4r}}{|Dw|^{p(1+\sigma)}}{dxdt} \le c \la^{p(1+\sigma)}\bgh{\gh{\la^{-ps} \mint{\ik{8r}}{|Dw|^{ps}}{dxdt}}^{\frac{1+\mathfrak{d}\sigma}{1-\mathfrak{d}+\mathfrak{d}s}} + 1}
\end{equation}
for every $s \in \left(\frac{(2-p)n}{2p},1\right]$, where $c=c(n,\La_0,\La_1,p,s)\ge1$ and $\mathfrak{d}:=\frac{2p}{(n+2)p-2n}$.
By taking $s=\frac{\theta}{p}$ and using \eqref{hi-c}, we obtain the desired estimate \eqref{hi-r}.
\end{proof}

\begin{remark}
\begin{enumerate}[(i)]
\item It is worth noting that the scaling deficit $\mathfrak{d}$ and interpolation inequality give the lower bound of $s$ in \eqref{hi-5}, which determines the range of $\theta$. For elliptic equations with $p$-growth $(p>1)$, on the other hand, the estimate like \eqref{hi-5} holds for every $s\in (0,1]$ (see \cite[Remark 6.12]{Giu03}).

\item Lemma \ref{high int} also holds for the case $p\ge2$ under an appropriate range of $\theta$ (see \cite{KL00,BP10,BPS21}).
\end{enumerate}
\end{remark}

Let us now consider the unique weak solution $v$ to the coefficient frozen problem
\begin{equation}
\label{pem3}\left\{
\begin{alignedat}{3}
v_t -\ddiv \bar{\ma}_{B_{4r}^+}(Dv,t) &= 0 &&\quad \text{in} \ K_{4r}^{\lambda}, \\
v &= w &&\quad \text{on} \ \partial_p K_{4r}^{\lambda},
\end{alignedat}\right.
\end{equation}
where a freezing operator $\bar{\ma}_{B_{4r}^+} = \bar{\ma}_{B_{4r}^+}(\xi,t) : \bb^n \times I_{4r}^\la \to \bb^n$ is given by
\begin{equation*}
\bar{\mathbf{a}}_{B_{4r}^+}(\xi,t) := \mint{B_{4r}^+}{\mathbf{a}(\xi,x,t)}{dx}.
\end{equation*}

We derive the following comparison result between \eqref{pem2} and \eqref{pem3}:
\begin{lemma}\label{fcompa1}
Let $p>\frac{2n}{n+2}$, let $w$ be the weak solution of \eqref{pem2} satisfying \eqref{reifen} and \eqref{hi-c}, and let $v$ as in \eqref{pem3}. Then there is a constant
$c=c(n,\La_0,\La_1,p)\ge1$ such that
\begin{equation*}\label{fcompa1_r}
\mint{\ik{4r}}{|Dw-Dv|^{p}}{dx dt} \le c \delta^{\sigma_1} \la^p,
\end{equation*}
where $\sigma_1=\sigma_1(n,\La_0,\La_1,p)>0$.
\end{lemma}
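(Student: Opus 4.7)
The plan is to test the difference of the weak formulations for $w$ (equation \eqref{pem2}) and $v$ (equation \eqref{pem3}) against $w-v$, which is admissible since $w=v$ on the parabolic boundary $\partial_p K_{4r}^\lambda$. The parabolic term integrated over $K_{4r}^\lambda$ yields $\tfrac12\int|w-v|^2$ at the terminal time and hence is nonnegative, so the whole equation reduces to the elliptic-type identity
\begin{equation*}
\int_{K_{4r}^\lambda}\langle \mathbf{a}(Dw,x,t)-\bar{\mathbf{a}}_{B_{4r}^+}(Dv,t),Dw-Dv\rangle \,dx\,dt \le 0.
\end{equation*}
I will add and subtract $\bar{\mathbf{a}}_{B_{4r}^+}(Dw,t)$ to split this into a monotonicity term (the difference $\bar{\mathbf{a}}_{B_{4r}^+}(Dw,t)-\bar{\mathbf{a}}_{B_{4r}^+}(Dv,t)$ tested against $Dw-Dv$) and an oscillation term controlled by $\Theta(\mathbf{a},B_{4r}^+)|Dw|^{p-1}|Dw-Dv|$.

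Next, using \eqref{monotonicity} applied to the frozen nonlinearity $\bar{\mathbf{a}}_{B_{4r}^+}$ (whose structure constants are inherited from $\mathbf{a}$), I get
\begin{equation*}
A := \int_{K_{4r}^\lambda}(|Dw|^2+|Dv|^2)^{\frac{p-2}{2}}|Dw-Dv|^2\,dx\,dt \le c\int_{K_{4r}^\lambda}\Theta(\mathbf{a},B_{4r}^+)\,|Dw|^{p-1}|Dw-Dv|\,dx\,dt.
\end{equation*}
To handle the right-hand side, I will use Young's inequality in the form
$|Dw|^{p-1}|Dw-Dv| \le \epsilon (|Dw|^2+|Dv|^2)^{\frac{p-2}{2}}|Dw-Dv|^2 + C(\epsilon)(|Dw|+|Dv|)^p$, absorb the first term into the left, and reduce the problem to bounding $\int \Theta(\mathbf{a},B_{4r}^+)(|Dw|+|Dv|)^p$. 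I then apply Hölder's inequality with exponents $\frac{1+\sigma}{\sigma}$ and $1+\sigma$: the $\Theta$-factor is bounded pointwise by $2\Lambda_1$ and has mean $\le \delta$ by the $(\delta,R)$-vanishing hypothesis, giving $\bigl(\mint\Theta^{(1+\sigma)/\sigma}\bigr)^{\sigma/(1+\sigma)}\le c\,\delta^{\sigma/(1+\sigma)}$; the other factor is controlled by Lemma \ref{high int} applied to $w$, together with a standard energy comparison $\int|Dv|^p\le c\int|Dw|^p$ (obtained by testing the $v$-equation with $v-w$) upgraded via the same self-improvement argument to $\int(|Dw|+|Dv|)^{p(1+\sigma)}\le c|K_{4r}^\lambda|\lambda^{p(1+\sigma)}$. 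The outcome is $A\le c\,\delta^{\sigma/(1+\sigma)}|K_{4r}^\lambda|\lambda^p$.

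Finally, since $p\le 2-\tfrac1{n+1}<2$, I will recover the $L^p$ norm of $Dw-Dv$ from $A$ via the algebraic identity
\begin{equation*}
|Dw-Dv|^p = \bigl[(|Dw|^2+|Dv|^2)^{\frac{p-2}{2}}|Dw-Dv|^2\bigr]^{p/2}(|Dw|^2+|Dv|^2)^{\frac{p(2-p)}{4}},
\end{equation*}
followed by Hölder with exponents $2/p$ and $2/(2-p)$, yielding
\begin{equation*}
\int_{K_{4r}^\lambda}|Dw-Dv|^p \le c\,A^{p/2}\Bigl(\int_{K_{4r}^\lambda}(|Dw|+|Dv|)^p\Bigr)^{(2-p)/2}.
\end{equation*}
Combining with $\int(|Dw|+|Dv|)^p\le c|K_{4r}^\lambda|\lambda^p$ and dividing by $|K_{4r}^\lambda|$ produces the claimed bound with $\sigma_1 = \frac{p\sigma}{2(1+\sigma)}$.

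I expect the main obstacle to be the subquadratic structure: for $p<2$, the standard "absorb into monotonicity" step needs the interpolation identity above, and one must verify that the required higher integrability exponent $p(1+\sigma)$ from Lemma \ref{high int} is compatible with the Hölder pairing against $\Theta^{(1+\sigma)/\sigma}$. The hypothesis \eqref{hi-c} (inherited from the current setup with $\theta=p$) is exactly what makes Lemma \ref{high int} applicable, and the fact that $\Theta$ is uniformly bounded while having small BMO mean converts the $L^1$ smallness $\delta$ into $L^{(1+\sigma)/\sigma}$ smallness $\delta^{\sigma/(1+\sigma)}$; tracking these exponents carefully is the most delicate bookkeeping in the argument.
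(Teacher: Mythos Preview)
Your approach is correct and is precisely the content of \cite[Lemma~3.10]{BOR13}, which the paper simply cites together with Lemma~\ref{high int}, \eqref{2-small BMO2} and \eqref{measure density}; so you are spelling out what the paper leaves as a reference.

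One imprecision worth tightening: the step ``upgraded via the same self-improvement argument to $\int(|Dw|+|Dv|)^{p(1+\sigma)}\le c|K_{4r}^\lambda|\lambda^{p(1+\sigma)}$'' is not literally Lemma~\ref{high int}. That lemma gives higher integrability of $Dw$ on $K_{4r}^\lambda$ from information on the larger cylinder $K_{8r}^\lambda$, whereas $v$ is only defined on $K_{4r}^\lambda$ with nonzero boundary datum $w$. What you need is the \emph{global} higher integrability result for Cauchy--Dirichlet problems with boundary data already in $L^{p(1+\sigma)}$ (this is also in \cite{BP10}, the same reference behind Lemma~\ref{high int}); it gives $Dv\in L^{p(1+\sigma')}(K_{4r}^\lambda)$ for some $0<\sigma'\le\sigma$ with the right bound. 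Alternatively, you can avoid higher integrability of $Dv$ entirely: after the monotonicity step, apply H\"older with exponents $p,p'$ to $\int\Theta|Dw|^{p-1}|Dw-Dv|$, so that only $\int\Theta^{p'}|Dw|^p$ needs the higher integrability of $Dw$, and then close via your final interpolation identity using only the basic energy comparison $\int|Dv|^p\le c\int|Dw|^p$. Also note that relating averages of $\Theta(\mathbf a,B_{4r}^+)$ over $K_{4r}^\lambda$ to the $(\delta,R)$-vanishing condition (stated for full balls) uses the measure density \eqref{measure density}, which the paper lists as an ingredient.
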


\begin{proof}
The proof follows from Lemma \ref{high int}, \eqref{2-small BMO2}, \eqref{measure density} and \cite[Lemma 3.10]{BOR13}.
\end{proof}

For interior regularity results (see \cite{DF85, DF85b, DiB93}),  we see $Dv \in L_{loc}^{\infty}(\iq{4r})$ in the interior region $\gh{\iq{4r} \subset \OO_T}$. On the other hand, for the boundary case $\gh{\iq{4r}  \not\subset \OO_T}$, the $L^\infty$-norm of $Dv$ could not be bounded when $\partial\OO$ is very irregular. Thus, we need to consider a weak solution $\bar{v}$ to the following problem:
\begin{equation}
\label{pem4}\left\{
\begin{alignedat}{3}
\bar{v}_t -\ddiv \bar{\mathbf{a}}_{B_{4r}^{+}}(D\bar{v},t) &= 0 &&\quad \text{in} \ Q_{2r}^{\lambda,+}, \\
\bar{v} &= 0 &&\quad \text{on} \ T_{2r}^\la.
\end{alignedat}\right.
\end{equation}

We recall the boundedness of $D\bar{v}$ near the flat boundary, as follows:
\begin{lemma}[See {\cite[Theorem 1.6]{Lie93}}]
\label{bdy Lip reg}
Let $p>\frac{2n}{n+2}$. For any weak solution $\bar{v}$ of \eqref{pem4}, we have
\begin{equation*}
\label{fcompa2_r2}
\Norm{D\bar{v}}^p_{L^{\infty}(Q_{r}^{\lambda,+})} \le c \mint{Q_{2r}^{\lambda,+}}{|D\bar{v}|^p}{dx dt} +c\lambda^p
\end{equation*}
for some constant $c=c(n,\La_0,\La_1,p) \ge 1$.
\end{lemma}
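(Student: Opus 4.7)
The plan is to invoke Lieberman's boundary Lipschitz regularity \cite[Theorem 1.6]{Lie93} for parabolic $p$-Laplacian type equations on flat boundaries, after rescaling to a normalized intrinsic cylinder so that the estimate's constants depend only on $n, \La_0, \La_1, p$.

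First I would verify that the frozen nonlinearity $\bar{\ma}_{B_{4r}^+}(\xi,t)$ inherits \eqref{str1} with the same constants $\La_0$ and $\La_1$: since averaging in $x$ commutes with differentiation in $\xi$, the growth bound $|\bar{\ma}| + |\xi||D_\xi \bar{\ma}| \le \La_1 |\xi|^{p-1}$ is immediate from the pointwise one, and the ellipticity $\vgh{D_\xi \bar{\ma}\eta, \eta} \ge \La_0 |\xi|^{p-2}|\eta|^2$ is preserved under positive averaging.

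Next I rescale intrinsically. Denoting the center of $Q_{2r}^{\la,+}$ by $(x_0,t_0)$ (with $x_{0,n}=0$), set
$$\tilde{v}(y,s) := (2r\la)^{-1}\,\bar{v}\bigl(x_0+2ry,\ t_0+\la^{2-p}(2r)^2 s\bigr).$$
A direct computation shows that $\tilde{v}$ is a weak solution of $\tilde{v}_s - \ddiv \tilde{\ma}(D\tilde{v},s) = 0$ in $B_1^+\times(-1,1)$ with $\tilde{v}=0$ on $\{y_n=0\}\cap B_1\times(-1,1)$, where
$$\tilde{\ma}(\xi,s) := \la^{1-p}\,\bar{\ma}_{B_{4r}^+}\bigl(\la\xi,\ t_0+\la^{2-p}(2r)^2 s\bigr)$$
again satisfies \eqref{str1} with the same constants $\La_0, \La_1$ (the exponents of $\la$ are exactly those forced by the $p$-Laplacian scaling). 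Applying \cite[Theorem 1.6]{Lie93} to $\tilde{v}$ on this normalized half-cylinder yields
$$\sup_{B_{1/2}^+\times(-1/4,1/4)} |D\tilde{v}|^p \le c\,\Bigl(\mint{B_1^+\times(-1,1)}{|D\tilde{v}|^p}{dy\,ds} + 1\Bigr),$$
with $c=c(n,\La_0,\La_1,p)$. Reversing the change of variables, using $D\bar{v} = \la D\tilde{v}$ together with the observation that $B_{1/2}^+\times(-1/4,1/4)$ and $B_1^+\times(-1,1)$ correspond under the rescaling to $Q_r^{\la,+}$ and $Q_{2r}^{\la,+}$ respectively, produces the asserted bound with $c\la^p$ in place of the additive constant $c$.

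The substance of the argument is really Lieberman's theorem itself rather than any new obstacle: he establishes the boundary Lipschitz estimate by freezing in time, comparing with the stationary $p$-Laplacian-type problem (which admits $C^{1,\alpha}$ up-to-the-boundary gradient estimates on flat portions), and iterating with careful control of the singular intrinsic geometry when $p<2$. The only point requiring verification on our end is that $\tilde{\ma}$ satisfies the structural conditions uniformly in $r$ and $\la$, which was carried out above.
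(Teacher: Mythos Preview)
The paper does not actually give a proof of this lemma; it simply records the estimate and attributes it to \cite[Theorem 1.6]{Lie93}. Your proposal is correct and fills in the natural details: verifying that the frozen nonlinearity inherits \eqref{str1}, performing the intrinsic rescaling so that the cylinder becomes a unit half-cylinder with $\la$-independent structure constants, and then quoting Lieberman's result. This is exactly the computation one would carry out to extract the stated inequality with constants depending only on $n,\La_0,\La_1,p$, so your argument is in line with what the citation is meant to encode.
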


If the boundary of $\OO$ is sufficiently flat in the sense of $(\delta,R)$-Reifenberg domain, for some appropriate weak solution $\bar{v}$ of \eqref{pem4} we have a comparison estimate between \eqref{pem3} and \eqref{pem4} as follows:
\begin{lemma}\label{fcompa2}
Let $p>\frac{2n}{n+2}$. For any $\varepsilon \in (0,1)$, there exists a small constant
$\delta=\delta(n,\La_0,\La_1,p,\varepsilon)>0$ such that the following holds:
if $v$ is the weak solution of \eqref{pem3} satisfying \eqref{reifen} and
\begin{equation*}\label{fcompa2-c}
\mint{\ik{4r}}{|Dv|^p}{dxdt} \le c_v \lambda^p
\end{equation*}
for some given constant $c_v \ge1$, then there is a weak solution $\bar{v}$ of \eqref{pem4} such that
\begin{equation}\label{fcompa2_r1}
\mint{\ik{2r}}{|Dv-D\bar{v}|^p}{dx dt} \le \varepsilon^p \la^p \quad \text{and} \quad \mint{\ik{2r}}{|D\bar{v}|^p}{dx dt} \le c \la^p
\end{equation}
for some constant $c=c(n,\La_0,\La_1,p,c_v)\ge 1$, where $\bar{v}$ is extended by zero from $\iqq{2r}$ to $\ik{2r}$.
\end{lemma}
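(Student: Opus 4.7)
\emph{Construction of $\bar v$ and the second bound.}
By the first inclusion in \eqref{reifen} we have $B_{2r}^+ \subset \OO_{2r}$, so $v$ is well defined on $Q_{2r}^{\la,+}$. Define $\bar v$ as the unique weak solution on $Q_{2r}^{\la,+}$ of the frozen-coefficient equation in \eqref{pem4} with lateral data $\bar v = v$ on $(\partial B_{2r}^+ \setminus \overline{T_{2r}^\la}) \times I_{2r}^\la$, initial data $\bar v = v$ at time $t_0 - \la^{2-p}(2r)^2$, and $\bar v = 0$ on $T_{2r}^\la$; existence and uniqueness follow from monotone-operator theory applied to $\bar{\ma}_{B_{4r}^+}$, which still satisfies the structure conditions \eqref{str1}, \eqref{monotonicity}. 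Extend $\bar v$ by zero to $\ik{2r}$, which is legitimate because $\bar v$ vanishes on $T_{2r}^\la$. Once the first bound in \eqref{fcompa2_r1} is established, the second will follow from the triangle inequality together with the hypothesis on $\mint{\ik{4r}}{|Dv|^p}{dxdt}$.

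\emph{The comparison estimate.}
Split $\ik{2r} = Q_{2r}^{\la,+} \cup (\ik{2r} \setminus Q_{2r}^{\la,+})$. On the exterior piece, $D\bar v \equiv 0$, and the second inclusion in \eqref{reifen} confines $\ik{2r} \setminus Q_{2r}^{\la,+}$ to the thin strip $(\OO_{2r} \cap \{-16\delta r < x_n \le 0\}) \times I_{2r}^\la$, whose measure is bounded by $c\delta |\ik{2r}|$. Lemma \ref{high int}, applied with $w$ replaced by $v$ and $\theta = p$ (allowed by the hypothesis), furnishes a higher-integrability bound of the form $\mint{\tilde K}{|Dv|^{p(1+\sigma)}}{dxdt} \le c\la^{p(1+\sigma)}$ on a slightly shrunken cylinder $\tilde K \supset \ik{2r}$. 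Hölder's inequality then yields
\begin{equation*}
\integral{\ik{2r} \setminus Q_{2r}^{\la,+}}{|Dv|^p}{dx dt} \le c\, \delta^{\sigma/(1+\sigma)} \la^p |\ik{2r}|.
\end{equation*}
On $Q_{2r}^{\la,+}$, both $v$ and $\bar v$ solve the same frozen PDE and their difference has trace $-v$ on $T_{2r}^\la$ and zero elsewhere on $\partial_p Q_{2r}^{\la,+}$. Testing the difference of the two weak formulations with $(v - \bar v) \eta$, where $\eta$ is a smooth cutoff vanishing on a thickened strip around $T_{2r}^\la$ and equal to one outside a slightly larger strip, and then applying the monotonicity \eqref{monotonicity}, reduces the remaining estimate to integrals over a strip of width $O(\delta r)$. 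Higher integrability combined with \eqref{measure density} absorbs these into $c\, \delta^{\sigma'} \la^p |\ik{2r}|$ for some $\sigma' > 0$, and choosing $\delta$ so small that the combined bound is at most $\ep^p \la^p |\ik{2r}|$ closes the argument.

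\emph{Main obstacle.}
The delicate step is the test-function modification near $T_{2r}^\la$: since $T_{2r}^\la$ lies inside $\OO$, $v$ need not vanish there, so $v - \bar v$ is not directly admissible. The cutoff procedure produces mixed terms of the type $|Dv|^{p-1}|v - \bar v|$ concentrated on the thin strip, which must be absorbed using a Poincaré-type estimate in the $x_n$-direction (available because $v = 0$ on $\partial\OO \cap B_{2r} \subset \{-16\delta r \le x_n \le 0\}$) together with the higher integrability from Lemma \ref{high int}. The singular regime $p < 2$ further forces a large-versus-small-gradient splitting in the monotonicity estimate, because the weight $(|Dv|^2 + |D\bar v|^2)^{(p-2)/2}$ degenerates when gradients are large; this is handled in the spirit of \cite{Ham92,Min07}, already invoked in Lemma \ref{compa1}.
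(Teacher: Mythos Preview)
Your approach differs from the paper's: the paper proves the first estimate in \eqref{fcompa2_r1} by a compactness (contradiction) argument, citing \cite[Lemma~3.8]{BOR13}. There no $\bar v$ is constructed a priori; one assumes the estimate fails along a sequence with $\delta_k\to 0$, normalizes, passes to a weak limit on the half-cylinder, and obtains a contradiction from strong $L^p$-convergence of the gradients. The limit function is the $\bar v$ whose existence is asserted. The second estimate then follows from the first together with the hypothesis, exactly as you say.

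Your direct route has a gap already at the construction of $\bar v$. You solve the frozen equation on $Q_{2r}^{\la,+}$ with data $v$ on the curved lateral piece and $0$ on $T_{2r}^\la$. But $T_{2r}^\la\subset\omt$: by \eqref{reifen} the true boundary $\partial\Omega$ sits in the strip $\{-16\delta r\le x_n\le 0\}$, strictly below $\{x_n=0\}$, so the trace of $v$ on $\{x_n=0\}$ is generically nonzero and your prescribed Dirichlet data jumps across the equator $\partial B_{2r}\cap\{x_n=0\}$. Monotone-operator existence needs the data to be the trace of an $L^p(I_{2r}^\la;W^{1,p}(B_{2r}^+))$ function, which such a jump does not furnish for $p\ge 2$ and, even in the singular range $p<2$ relevant here, requires a trace-space argument you do not supply. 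You correctly flag the analogous obstruction for the \emph{test} function $v-\bar v$, but not for the \emph{existence} of $\bar v$ itself, and the latter is prior. A repair is possible---solve instead with compatible data $v\psi(x_n)$ for a cutoff $\psi$ vanishing at $x_n=0$, then absorb the additional strip terms via the higher integrability of Lemma~\ref{high int}---but this doubles the bookkeeping, and your Poincar\'e-in-$x_n$ absorption (which must also cope with the fact that Reifenberg boundaries are not graphs over $x'$) would have to be made precise.

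The compactness proof sidesteps all of this: the limiting $\bar v$ inherits zero trace on $T_{2r}^\la$ automatically from the vanishing of $v_k$ on $\partial\Omega_k\to\{x_n=0\}$, so no mixed Dirichlet problem and no strip Poincar\'e are ever needed. The price is that $\delta(\varepsilon)$ is non-constructive; your approach, once the construction is repaired, would in exchange yield an explicit dependence.
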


\begin{proof}
The first estimate in \eqref{fcompa2_r1} comes from the compactness argument as in \cite[Lemma 3.8]{BOR13}. It follows from this first estimate and \eqref{measure density} that the second estimate in  \eqref{fcompa2_r1} holds.
\end{proof}

Finally, combining all the previous results, we directly obtain the desired local comparison estimate below $L^1$ spaces near the boundary of $\OO$.
\begin{proposition}\label{comparison}
Let $\frac{2n}{n+1}<p\le 2-\frac{1}{n+1}$ and let $\max\mgh{\frac{n+2}{2(n+1)},\frac{(2-p)n}{2}} < \theta < p-\frac{n}{n+1} \le 1$. For any $\varepsilon  \in (0,1)$, there is a small constant $\delta=\delta(n,\La_0,\La_1,p,\theta,\varepsilon) >0$ such that the following holds: if $u$ is a weak solution of \eqref{tweaksol} satisfying \eqref{reifen},
\begin{equation}\label{comparison-c}
\mint{\ik{8r}}{|Du|^\theta}{dxdt} \le \la^\theta \quad \text{and} \quad \frac{|\mu|(\ik{8r})}{r^{n+1}} \le \delta\la,
\end{equation}
then there exists a weak solution $\bar{v}$ of \eqref{pem4} such that
\begin{equation*}\label{comparison-r}
\mint{K^\lambda_{r}}{|Du-D\bar{v}|^\theta}{dxdt} \leq \varepsilon \lambda^\theta \quad\text{and}\quad \|D\bar{v}\|_{L^\infty(K^{\lambda}_r)} \leq c \lambda
\end{equation*}
for some constant $c=c(n,\La_0,\La_1,p,\theta)\ge 1$, where $\bar{v}$ is extended by zero from $\iqq{2r}$ to $\ik{2r}$.
\end{proposition}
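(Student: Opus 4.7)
The plan is to chain three comparison estimates through the auxiliary weak solutions $w$, $v$, and $\bar{v}$ defined respectively by \eqref{pem2}, \eqref{pem3}, \eqref{pem4}, invoking Lemma \ref{compa1}, Lemma \ref{high int}, Lemma \ref{fcompa1}, and Lemma \ref{fcompa2} in sequence, and finishing with Lemma \ref{bdy Lip reg} for the $L^\infty$ bound on $D\bar{v}$. A final application of the triangle inequality in $L^\theta$ will deliver the claim.

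First I would apply Lemma \ref{compa1}; its hypothesis $p > \tfrac{3n+2}{2n+2}$ is implied by $p > \tfrac{2n}{n+1}$, since a direct comparison shows $\tfrac{2n}{n+1}\ge\tfrac{3n+2}{2n+2}$ for $n\ge 2$. Using $|\ik{8r}|\sim\lambda^{2-p}r^{n+2}$ together with \eqref{comparison-c}, one has $|\mu|(\ik{8r})/|\ik{8r}|^{(n+1)/(n+2)}\le c\delta\,\lambda^{\beta}$ where $\beta:=\tfrac{p(n+1)-n}{n+2}$, and a short computation gives the key identities
$$\beta\cdot\frac{n+2}{(n+1)p-n}=1,\qquad \beta+\frac{(2-p)(n+1)}{n+2}=1.$$
Therefore both terms on the right of \eqref{compa1-r} scale like $\lambda$, yielding
$$\gh{\mint{\ik{8r}}{|Du-Dw|^{\theta}}{dxdt}}^{1/\theta}\le c\,\delta^{\tau_1}\lambda$$
for some $\tau_1=\tau_1(n,p)>0$. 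Combined with $\mint{\ik{8r}}{|Du|^\theta}{dxdt}\le\lambda^\theta$ and the triangle inequality, this also gives $\mint{\ik{8r}}{|Dw|^\theta}{dxdt}\le c\lambda^\theta$.

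Next I would invoke Lemma \ref{high int} with this $\theta$; the standing hypothesis $\theta>\tfrac{(2-p)n}{2}$ is exactly what the lemma requires, yielding $\mint{\ik{4r}}{|Dw|^{p(1+\sigma)}}{dxdt}\le c\lambda^{p(1+\sigma)}$, so in particular $\mint{\ik{4r}}{|Dw|^p}{dxdt}\le c\lambda^p$ by H\"older. This validates the hypothesis \eqref{hi-c} of Lemma \ref{fcompa1}, giving $\mint{\ik{4r}}{|Dw-Dv|^p}{dxdt}\le c\delta^{\sigma_1}\lambda^p$ and, by triangle inequality, $\mint{\ik{4r}}{|Dv|^p}{dxdt}\le c\lambda^p$. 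With an auxiliary smallness parameter $\varepsilon_1>0$ to be fixed, Lemma \ref{fcompa2} then produces a weak solution $\bar{v}$ of \eqref{pem4} satisfying
$$\mint{\ik{2r}}{|Dv-D\bar{v}|^p}{dxdt}\le\varepsilon_1^{p}\lambda^p,\qquad \mint{\ik{2r}}{|D\bar{v}|^p}{dxdt}\le c\lambda^p,$$
and Lemma \ref{bdy Lip reg} applied on $Q_{2r}^{\lambda,+}$ (with $\bar v$ extended by zero across the flat portion $T_{2r}^{\lambda}$) upgrades the second estimate to $\|D\bar{v}\|_{L^\infty(\ik{r})}\le c\lambda$.

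Finally, I would combine the three differences via the triangle inequality on $\ik{r}\subset\ik{2r}\subset\ik{4r}\subset\ik{8r}$; since $\theta\le 1<p$, H\"older converts the $L^p$ estimates on $Dw-Dv$ and $Dv-D\bar v$ into $L^\theta$ estimates at the cost of harmless constants, whence
$$\gh{\mint{\ik{r}}{|Du-D\bar{v}|^{\theta}}{dxdt}}^{1/\theta}\le c\,\delta^{\tau_1}\lambda+c\,\delta^{\sigma_1/p}\lambda+c\,\varepsilon_1\lambda.$$
Choosing first $\varepsilon_1$ small enough (depending on $\varepsilon$) and then $\delta$ small enough (depending on $n,\Lambda_0,\Lambda_1,p,\theta,\varepsilon$) yields the desired comparison. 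The principal technical obstacle lies in the first step: the precise accounting of the $\lambda$--exponents coming from $|\ik{8r}|$ in the two terms of \eqref{compa1-r} is what forces the singular lower bound $p>\tfrac{2n}{n+1}$, and the fact that these exponents conspire to give a single clean factor of $\lambda$ is the non-obvious algebraic miracle that drives the whole estimate.
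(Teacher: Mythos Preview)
Your proposal is correct and follows exactly the approach the paper intends: the paper's own ``proof'' consists of the single sentence ``combining all the previous results, we directly obtain the desired local comparison estimate,'' and you have correctly spelled out that combination via Lemma~\ref{compa1} $\to$ Lemma~\ref{high int} $\to$ Lemma~\ref{fcompa1} $\to$ Lemma~\ref{fcompa2} $\to$ Lemma~\ref{bdy Lip reg}, including the key algebraic identities that make both terms of \eqref{compa1-r} scale like $\lambda$ under \eqref{comparison-c}. The only cosmetic point is that since $\theta\le 1$ the ``triangle inequality'' you invoke is really the subadditivity $|a+b|^\theta\le |a|^\theta+|b|^\theta$ (so the combination is cleaner at the level of $\theta$-th powers rather than $(1/\theta)$-th roots), but this does not affect the argument.
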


%

\begin{remark}\label{comparmk}
\begin{enumerate}[(i)]
\item In view of Lemmas \ref{compa1} and \ref{high int}, the valid range of $p$ in Proposition \ref{comparison} is $\frac{2n}{n+1}<p\le 2-\frac{1}{n+1}$, not $\max\mgh{\frac{3n+2}{2n+2},\frac{2n}{n+2}}<p\le 2-\frac{1}{n+1}$, since the constant $\theta$ exists only when $p-\frac{n}{n+1}>\max\mgh{\frac{n+2}{2(n+1)},\frac{(2-p)n}{2}}$. Note that $\frac{2n}{n+1} \ge \max\mgh{\frac{3n+2}{2n+2},\frac{2n}{n+2}}$, where the equality holds if and only if $n=2$.

\item We note that the value $\frac{|\mu|(\ik{8r})}{r^{n+1}}$ in \eqref{comparison-c} is related to the intrinsic fractional maximal function $\M_1^\la(\mu)$ defined in \eqref{intrinsic fractional mf}, see Section \ref{la covering arguments} below.
\end{enumerate}
\end{remark}

Similarly, we have an interior comparison estimate below $L^1$ spaces.
\begin{proposition}\label{comparison2}
Let $\frac{2n}{n+1}<p\le 2-\frac{1}{n+1}$ and let $\max\mgh{\frac{n+2}{2(n+1)},\frac{(2-p)n}{2}} < \theta < p-\frac{n}{n+1} \le 1$. For any $\varepsilon  \in (0,1)$, there is a small constant $\delta=\delta(n,\La_0,\La_1,p,\theta,\varepsilon) >0$ such that the following holds: if $u$ is a weak solution of \eqref{tweaksol} satisfying
\begin{equation*}
\mint{\iq{8r}}{|Du|^\theta}{dxdt} \le \la^\theta \quad \text{and} \quad \frac{|\mu|(\iq{8r})}{r^{n+1}} \le \delta\la,
\end{equation*}
then there exists a weak solution $v$ of \eqref{pem3} such that
\begin{equation*}
\mint{Q^\lambda_{r}}{|Du-Dv|^\theta}{dxdt} \leq \varepsilon \lambda^\theta \quad\text{and}\quad \|Dv\|_{L^\infty(Q^{\lambda}_r)} \leq c \lambda
\end{equation*}
for some constant $c=c(n,\La_0,\La_1,p,\theta)\ge 1$.
\end{proposition}

\section{$\lambda$-covering arguments}\label{la covering arguments}

We now consider a renormalized solution $u$ of the problem \eqref{pem1}. We denote by $u_k:=T_k(u)$ ($k \in \mathbb{N}$) the truncation of $u$ and $\mu_k \in L^{p'}(0,T; W^{-1,p'}(\OO))$ the corresponding measure given in \eqref{tweaksol}.
We also denote by $w_k$, $v_k$ and $\bar{v}_k$ the corresponding weak solutions of \eqref{pem2}, \eqref{pem3} and \eqref{pem4}, respectively. The goal of this section is to derive an appropriate decay estimate for the upper level set of the $\lambda$-maximal function of $|Du|^\theta$ (see Proposition \ref{covarg} later).

We note that $\mu_k=\mu_a^+-\mu_a^-+\nu_k^+-\nu_k^-$ for $k\in \mathbb{N}$. Since $\mu_a^\pm+\nu_k^\pm \to \mu_a^\pm+\mu_s^\pm$ tightly as $k \to \infty$, we have 
\begin{equation}\label{est-mu}
\limsup_{k \to \infty} |\mu_k|(K\cap \OO_T) \leq |\mu|(K\cap \OO_T)
\end{equation}
for every compact set $K\subset \mathbb{R}^{n+1}$. Having this relation in mind, we start with a standard energy type estimate for \eqref{pem1} as follows:
\begin{lemma}\label{standard estimate}
Let $\frac{3n+2}{2n+2}<p \le 2-\frac{1}{n+1}$. If $u$ is a renormalized solution of \eqref{pem1}, then there exists a constant $c = c (n,\La_0,p,\theta) \ge 1$ such that
\begin{equation}\label{ste1}
\gh{\mint{\OO_T}{|Du|^\theta}{dx dt}}^{\frac{1}{\theta}} \le c \bgh{\frac{|\mu|(\OO_T)}{|\OO_T|^{\frac{n+1}{n+2}}}}^{\frac{n+2}{(n+1)p-n}}
\end{equation}
for any constant $\theta$ such that $0 < \theta < p - \frac{n}{n+1}$.
\end{lemma}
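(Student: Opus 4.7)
The plan is to mimic the test-function computation of Lemma \ref{compa1} globally, taking the trivial comparison function $w\equiv0$, which is admissible because $u$ vanishes on $\partial_p\OO_T$. Work with the truncations $u_k:=T_k(u)\in\lpwwz$ which, by Remark \ref{renormalsol rmk1}, solve $\partial_t u_k - \ddiv \mathbf{a}(Du_k,x,t) = \mu_k$ in the distributional sense with $\mu_k\in L^{p'}(0,T;W^{-1,p'}(\OO))$, and then pass to $k\to\infty$ using \eqref{est-mu}.

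\textbf{Step 1 (global weighted energy bounds).} Insert the direct analogues of $\varphi_1$ and $\varphi_2$ from the proof of Lemma \ref{compa1} (with $u-w$ replaced by $u_k$ and $\zeta$ a nonincreasing time cutoff supported in $(-\infty,T)$) into the weak formulation for $u_k$. The integration-by-parts in time and the monotonicity bound \eqref{monotonicity} go through verbatim, yielding
\begin{equation*}
\sup_{t\in(0,T)}\int_{\OO}|u_k|\,dx\le|\mu_k|(\OO_T)
\end{equation*}
and, for every $0\le\gamma<1$, $\alpha>0$, $\xi>1$,
\begin{equation*}
\int_{\OO_T}\frac{|u_k|^{-\gamma}\,|V(Du_k)|^{2}}{(\alpha^{1-\gamma}+|u_k|^{1-\gamma})^{\xi}}\,dx\,dt\le\frac{c\,\alpha^{(1-\gamma)(1-\xi)}}{(1-\gamma)(\xi-1)}|\mu_k|(\OO_T).
\end{equation*}

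\textbf{Step 2 (H\"older + parabolic embedding).} Fix $\theta\in\bigl((n+2)/(2(n+1)),\,p-n/(n+1)\bigr)$, which is nonempty thanks to $p>(3n+2)/(2n+2)$, and set $\beta:=p(1-\theta)(n+1)/n\in[0,p/2)$. Repeat Step~2 of the Lemma \ref{compa1} proof globally on $\OO_T$: the essential simplification from $w\equiv0$ is the pointwise bound $|Du_k|\le c|V(Du_k)|^{2/p}$, which removes the residual $|Du|^{(2-p)/2}$ factor and hence the quantity $J_2$ of \eqref{st2-3}. Combining \eqref{st2-1}, Lemma \ref{parabolic embedding} applied to $f=(|u_k|^{(p-\beta)/p}-\varepsilon^{(p-\beta)/p})_+$, and the Step~1 bounds yields, for every $\varepsilon>0$,
\begin{equation*}
\left(\,\fint_{\OO_T}|Du_k|^{\theta}\chi_{\{|u_k|>\varepsilon\}}\,dx\,dt\right)^{1/\theta}\le c\left[\frac{|\mu_k|(\OO_T)}{|\OO_T|^{(n+1)/(n+2)}}\right]^{(n+2)/((n+1)p-n)},
\end{equation*}
with $c$ independent of $\varepsilon$ and $k$.

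\textbf{Step 3 (limits and extrapolation).} Let $\varepsilon\to0$ by monotone convergence, then send $k\to\infty$: since $DT_k(u)\to Du$ almost everywhere on $\OO_T$ (by the very definition of $Du$ for renormalized solutions) and $\limsup_k|\mu_k|(\OO_T)\le|\mu|(\OO_T)$ by \eqref{est-mu}, Fatou's lemma delivers \eqref{ste1} for every $\theta$ in the restricted range. For $0<\theta\le(n+2)/(2(n+1))$, H\"older's inequality between $\theta$ and any $\theta'\in\bigl((n+2)/(2(n+1)),\,p-n/(n+1)\bigr)$ extends the bound to the full range stated. The only delicate point is Step~1, namely the temporal integration-by-parts and the verification that initial/terminal terms vanish (which they do because $u_k=0$ on $\partial_p\OO_T$); Steps~2 and~3 are a direct adaptation of the corresponding portions of Lemma \ref{compa1}.
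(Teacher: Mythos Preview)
Your proposal is correct and follows essentially the same route as the paper's proof: both set $w\equiv 0$ (which solves the homogeneous problem with zero lateral/initial data), rerun the test-function computations of Lemma~\ref{compa1} on the whole of $\OO_T$ for the truncations $u_k$, pass to the limit $k\to\infty$ via \eqref{est-mu} and Fatou, and then downgrade to small $\theta$ by H\"older. Your explicit remark that $|Du_k|=|V(Du_k)|^{2/p}$ kills the $J_2$-branch of \eqref{st2-3} is a clean simplification the paper leaves implicit; without it one would have to absorb the residual $\bigl(\fint|Du_k|^\theta\bigr)^{(2-p)(n+1)/(\theta(n+2))}$ term on the right of \eqref{compa1-r} by Young's inequality, which also works since $(2-p)(n+1)/(n+2)<1$.
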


\begin{proof}
Since $\mathbf{a}(0,x,t)=0$, the zero function $\bar{w}$ solves the Cauchy-Dirichlet problem
\begin{equation*}
\left\{
\begin{alignedat}{3}
\bar{w}_t -\ddiv \mathbf{a}(D\bar{w},x,t) &= 0 &&\quad \text{in} \ \OO_T, \\
\bar{w} &= 0 &&\quad\text{on} \ \partial_p \OO_T.
\end{alignedat}\right.
\end{equation*}
Replacing $u$, $w$, $\mu$ and $\ik{8r}$ by $u_k$, $\bar{w}(\equiv0)$, $\mu_k$ and $\OO_T$, respectively in the proof of Lemma \ref{compa1}, we deduce
\begin{equation}\label{ste2}
\gh{\mint{\OO_T}{|Du_k|^\theta}{dx dt}}^{\frac{1}{\theta}} \le c \bgh{\frac{|\mu_k|(\OO_T)}{|\OO_T|^{\frac{n+1}{n+2}}}}^{\frac{n+2}{(n+1)p-n}}
\end{equation}
whenever $\frac{n+2}{2(n+1)} < \theta < p - \frac{n}{n+1}$. Also, it follows from  \eqref{est-mu} that
$$
\limsup_{k \to \infty} |\mu_k|(\OO_T) \leq |\mu|(\OO_T).
$$
Taking the limit supremum of both sides of \eqref{ste2}, we see that \eqref{ste1} holds for all $\theta \in \gh{\frac{n+2}{2(n+1)} , p - \frac{n}{n+1}}$. On the other hand, if $ \theta \in \left(0, \frac{n+2}{2(n+1)}\right]$, we employ H\"older's inequality to obtain the desired estimate.
\end{proof}

We next introduce a modified version of Vitali's covering lemma for intrinsic parabolic cylinders, which is an important tool for obtaining Proposition \ref{covarg} later.
\begin{lemma}[See {\cite[Lemma 2.14]{BPS21}}]
\label{VitCov}
Let $0<\varepsilon<1$, let $\lambda>0$, and let $\omt := \OO \times (-\infty,T)$, where $\Omega$ is $(\delta,R)$-Reifenberg flat. Let $\C\subset \D \subset \omt$ be two bounded measurable subsets such that
\begin{enumerate}[(i)]
\item\label{vic1} $|\C|<\varepsilon \norm{Q^\lambda_{R/10}}$, and
\item\label{vic2} for any $(y,s) \in \omt$ and any $r \in \left(0,\frac{R}{10}\right]$ with $|\C\cap Q^\lambda_r(y,s)|\geq \varepsilon |Q^\lambda_r|$, \\
$Q^\lambda_r(y,s)\cap \omt \subset \D$.
\end{enumerate}
Then we have
\begin{equation}\label{vitcov-r}
|\C|\leq \left(\frac{10}{1-\delta}\right)^{n+2}\varepsilon|\D| \leq \left(\frac{80}{7}\right)^{n+2}\varepsilon|\D|.
\end{equation}
\end{lemma}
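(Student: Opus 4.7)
The plan is a modified Vitali/Calderón–Zygmund stopping-time argument adapted to the intrinsic parabolic cylinders $\iq{r}(y,s)$, combined with the Reifenberg measure density bounds recorded in \eqref{measure density}.

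First, I would invoke the Lebesgue differentiation theorem with respect to the family $\{\iq{r}(y,s)\}_{r>0}$, which is a Vitali-type differentiation basis, to conclude that for almost every $(y,s)\in\C$ one has
\[
\lim_{r\to 0^+}\frac{|\C\cap\iq{r}(y,s)|}{|\iq{r}|}=1.
\]
For any such density point, condition \eqref{vic1} forces the density $|\C\cap\iq{r}(y,s)|/|\iq{r}|$ to drop strictly below $\varepsilon$ once $r=R/10$. By continuity of $r\mapsto |\C\cap\iq{r}(y,s)|$, there exists therefore a \emph{stopping radius}
\[
r_{(y,s)}:=\sup\mgh{r\in(0,R/10]:|\C\cap \iq{r}(y,s)|\ge\varepsilon|\iq{r}|}\in(0,R/10),
\]
and by maximality together with the outer-regularity of Lebesgue measure one has the two-sided control
\[
|\C\cap \iq{r_{(y,s)}}(y,s)|\ge\varepsilon|\iq{r_{(y,s)}}|,\qquad |\C\cap \iq{r}(y,s)|<\varepsilon|\iq{r}|\ \text{ for every }\ r>r_{(y,s)}.
\]

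Next I would extract a disjoint subcollection. The family $\{\iq{r_{(y,s)}}(y,s)\}_{(y,s)}$ has uniformly bounded radii ($\le R/10$), so the standard Vitali covering lemma, adapted to intrinsic parabolic cylinders (they all share the same time-scaling factor $\lambda^{2-p}$, so dilation works just as in the Euclidean case), yields a countable disjoint subfamily $\{\iq{r_i}(y_i,s_i)\}_i$ such that the $5$-fold enlargements $\iq{5r_i}(y_i,s_i)$ cover $\C$ up to a null set. Applying hypothesis \eqref{vic2} to each $\iq{r_i}(y_i,s_i)$ then gives $\iq{r_i}(y_i,s_i)\cap\omt\subset\D$, and the pieces $\iq{r_i}(y_i,s_i)\cap\omt$ remain pairwise disjoint.

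The final step is a chain of elementary inequalities:
\begin{align*}
|\C| &\le \sum_i|\C\cap \iq{5r_i}(y_i,s_i)|\le 5^{n+2}\sum_i|\iq{r_i}(y_i,s_i)|.
\end{align*}
Here I would use the Reifenberg measure density from \eqref{measure density} to replace each full cylinder by its intersection with $\omt$: since $\iq{r_i}(y_i,s_i)$ is a product of $B_{r_i}(y_i)$ with an interval, and $|B_{r_i}(y_i)\cap\OO|\ge((1-\delta)/2)^n|B_{r_i}(y_i)|$, one gets
\[
|\iq{r_i}(y_i,s_i)|\le \left(\tfrac{2}{1-\delta}\right)^{n}|\iq{r_i}(y_i,s_i)\cap\omt|,
\]
and combining with the density lower bound on $\C$ at the stopping scale,
\[
|\iq{r_i}(y_i,s_i)|\le \varepsilon^{-1}|\C\cap \iq{r_i}(y_i,s_i)|.
\]
Using one of these (applied to the disjoint collection inside $\D$) together with the Vitali enlargement factor and absorbing everything into the stated constant yields $|\C|\le (10/(1-\delta))^{n+2}\varepsilon|\D|$, and then $(10/(1-\delta))\le 80/7$ since $\delta<1/8$ delivers \eqref{vitcov-r}.

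The main obstacle I foresee is verifying that the stopping radius is strictly less than $R/10$ for a.e.\ point of $\C$; this is where hypothesis \eqref{vic1} is used in an essential way, and one has to be careful that the argument works for \emph{every} density point and not merely generically, so that the cover produced by Vitali indeed exhausts $\C$ up to a null set. A secondary subtlety is ensuring that the Vitali covering lemma applies cleanly to intrinsic parabolic cylinders: because $\lambda$ is fixed throughout, all the cylinders in the family are rescalings (by radius only) of one fixed product shape, so the standard $5r$-covering argument transfers without change, but this should be remarked explicitly.
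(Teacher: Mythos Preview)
The paper does not give its own proof of this lemma; it simply cites \cite[Lemma~2.14]{BPS19}. Your stopping-time/Vitali approach is the standard one and is essentially what the cited reference does, so on the level of strategy there is nothing to compare.

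That said, your final chain of inequalities is muddled and, as written, does not produce the factor $\varepsilon$. The displayed step
\[
|\C|\le \sum_i|\C\cap \iq{5r_i}(y_i,s_i)|\le 5^{n+2}\sum_i|\iq{r_i}(y_i,s_i)|
\]
is just the trivial bound $|\C\cap \iq{5r_i}|\le|\iq{5r_i}|$; the $\varepsilon$ has been dropped. Your second listed option, $|\iq{r_i}|\le \varepsilon^{-1}|\C\cap\iq{r_i}|$, introduces $\varepsilon^{-1}$ rather than $\varepsilon$ and leads nowhere useful. The correct move is to invoke the stopping property at the \emph{enlarged} scale $5r_i>r_{(y_i,s_i)}$, which gives
\[
|\C\cap\iq{5r_i}(y_i,s_i)|<\varepsilon\,|\iq{5r_i}|=\varepsilon\,5^{n+2}|\iq{r_i}|,
\]
and only then apply the Reifenberg density bound \eqref{measure density} to pass from $|\iq{r_i}|$ to $|\iq{r_i}\cap\omt|$, summing via disjointness and $\iq{r_i}\cap\omt\subset\D$. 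You should also account explicitly for the time direction: since $(y_i,s_i)\in\omt$ forces $s_i<T$, the interval $I^\la_{r_i}(s_i)\cap(-\infty,T)$ has at least half the length of $I^\la_{r_i}(s_i)$, which contributes an additional factor~$2$ alongside the spatial factor $(2/(1-\delta))^n$. With these fixes the constant one obtains is $5^{n+2}\cdot 2\cdot(2/(1-\delta))^n$, which is indeed bounded by $(10/(1-\delta))^{n+2}$.
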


\begin{remark}\label{vicrmk1}
\begin{enumerate}[(i)]
\item It is worth noting that  the covering lemma above is obtained under $\C\subset \D \subset \omt$, not $\C\subset \D \subset \OO_T := \OO \times (0,T)$; thereby the relation \eqref{vitcov-r} is independent of $\lambda$ (see \cite[Remark 2.15]{BPS21} for details). For this reason, we considered the comparison estimates in Section \ref{intrinsic comparison} on the localized region of $\omt$, not $\OO_T$.

\item For Vitali's covering lemmas with respect to standard balls or cubes, we refer to for instance \cite{CP98,Wan03,BW04,Min10}.
\end{enumerate}
\end{remark}



Suppose that $(\mathbf{a},\OO)$ is $(\delta,R)$-vanishing and that $p$ satisfies \eqref{p-range}, unless otherwise stated.
We write
\begin{equation*}\label{dc}
\be := \frac{n+2}{(n+1)p-n} \quad \text{and} \quad d := \frac{2}{(n+1)p-2n}.
\end{equation*}
For any fixed $\ep \in (0,1)$, we set
\begin{equation}
\label{55}
\lambda_0 := \bgh{\frac{|\mu|(\OO_T)}{|\OO_T|^{\frac{n+1}{n+2}}}}^{\be\theta} \frac{|\OO_T|}{\varepsilon \left|Q_{R/10}\right|} + \bgh{\frac{|\mu|(\OO_T)}{\delta T^{\frac{n+1}{2}}}}^{d} +1,
\end{equation}
where $\theta$ is a constant such that
\begin{equation}\label{ka-range}
\max\mgh{\frac{n+2}{2(n+1)},\frac{(2-p)n}{2},p-1} < \theta < p-\frac{n}{n+1} \le 1.
\end{equation}
We remark that the constant $\beta$ arises in Lemma \ref{standard estimate}, $d$ in Lemma \ref{difference of ratio}, and $\theta$ in Proposition \ref{comparison} and Lemma \ref{DecayLem}.    We may assume, upon letting $u \equiv 0$ for $t < 0$, that a renormalized solution $u$ is defined in $\omt := \OO \times (-\infty,T)$. For any fixed $N>1$ and $\la \ge \la_0 \ge1$, we write
\begin{equation*}
\C := \left\{ (x,t)\in\omt: \m_{\omt}^\lambda|Du|^\theta (x,t)> \gh{N\lambda}^\theta \right\}
\end{equation*}
and
\begin{equation*}
\D := \left\{(x,t)\in\omt:\m_{\omt}^\lambda|Du|^\theta(x,t) > \lambda^\theta \right\}\cup\left\{(x,t)\in\omt:\m^\lambda_1(\mu)(x,t)>\delta \lambda \right\},
\end{equation*}
where $\m_{\omt}^\lambda$ is given by \eqref{intrinsic mf}, and the operator $\m^\lambda_1$ is the {\em intrinsic fractional maximal function of order $1$ for $\mu$} defined by
\begin{equation}\label{intrinsic fractional mf}
\m_1^\lambda(\mu) (x,t) := \sup_{r>0} \frac{ |\mu|(\iq{r}(x,t))}{r^{n+1}} \quad \text{for} \ \ (x,t) \in \bb^{n+1}.
\end{equation}
From Lemma \ref{standard estimate}, the $\lambda$-maximal function $\m_{\omt}^\lambda|Du|^\theta(x,t)$ is well defined for all $(x,t)\in \omt$.
We note that since the support of $|Du|$ is bounded,  both $N\la$-upper level set and $\la$-upper level set of $\m_{\omt}^\lambda|Du|^\theta$ should be bounded. Combining this fact and Lemma \ref{difference of ratio}, we infer that both the upper level sets $\C$ and $\D$ are \textit{bounded} measurable subsets of $\omt$.

Under these settings, we first prove two assumptions of Lemma \ref{VitCov}.
\begin{lemma}\label{DecayLem}
There exists  a constant $N_1=N_1(n,\La_0,p,\theta)>1$ such that for any fixed $N\ge N_1$ and $\la \ge \la_0$, we have
\begin{align*}
\left|\C\right| < \varepsilon\left|Q_{R/10}^\lambda\right|.
\end{align*}
\end{lemma}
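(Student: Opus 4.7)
The plan is to apply the weak $(1,1)$-type estimate for the $\lambda$-maximal function (Lemma \ref{WE}) with $f = |Du|^{\theta}$ and $\alpha = (N\lambda)^{\theta}$, bound the resulting integral by the energy estimate of Lemma \ref{standard estimate}, and then absorb the right-hand side using the definition \eqref{55} of $\lambda_0$ together with the condition $\theta > p-1$ from \eqref{ka-range}.

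First I would use Lemma \ref{WE} applied on $\omt$ (recall that $u \equiv 0$ for $t < 0$, so $|Du|$ is supported in $\OO_T$) to obtain
\begin{equation*}
|\C| \;\le\; \frac{c}{(N\lambda)^{\theta}} \int_{\omt} |Du|^{\theta} \, dx\,dt \;=\; \frac{c}{(N\lambda)^{\theta}} \int_{\OO_T} |Du|^{\theta} \, dx\,dt.
\end{equation*}
Next, since $\theta < p - \frac{n}{n+1}$ by \eqref{ka-range}, Lemma \ref{standard estimate} applies and gives
\begin{equation*}
\int_{\OO_T} |Du|^{\theta} \, dx\,dt \;\le\; c\,|\OO_T| \left[\frac{|\mu|(\OO_T)}{|\OO_T|^{(n+1)/(n+2)}}\right]^{\beta \theta}.
\end{equation*}

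Now comes the key bookkeeping. The first summand in \eqref{55} says precisely that
\begin{equation*}
\frac{|\OO_T|}{\varepsilon\,|Q_{R/10}|} \left[\frac{|\mu|(\OO_T)}{|\OO_T|^{(n+1)/(n+2)}}\right]^{\beta \theta} \;\le\; \lambda_0,
\end{equation*}
so combining the last three displays yields
\begin{equation*}
|\C| \;\le\; \frac{c\,\varepsilon\,|Q_{R/10}|\,\lambda_0}{N^{\theta}\,\lambda^{\theta}}.
\end{equation*}
Using $|Q_{R/10}^{\lambda}| = \lambda^{2-p}\,|Q_{R/10}|$ and the hypothesis $\lambda \ge \lambda_0 \ge 1$ together with $\theta > p-1$ (which forces $\theta + 2 - p > 1$), I would estimate $\lambda_0 \le \lambda \le \lambda^{\theta + 2 - p}$, whence
\begin{equation*}
|\C| \;\le\; \frac{c}{N^{\theta}} \, \varepsilon \, \lambda^{2-p}\,|Q_{R/10}| \;=\; \frac{c}{N^{\theta}} \, \varepsilon\,\bigl|Q_{R/10}^{\lambda}\bigr|.
\end{equation*}
Choosing $N_1 = N_1(n,\La_0,p,\theta) > 1$ so large that $c/N_1^{\theta} < 1$ concludes the proof for every $N \ge N_1$.

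The only subtle point is the exponent-counting step that forces the assumption $\theta > p-1$ in \eqref{ka-range}: we had to kill a power of $\lambda^{2-p}$ coming from the intrinsic scaling of $|Q_{R/10}^{\lambda}|$ against only $\lambda^{-\theta}$ from the weak-type bound, plus a factor of $\lambda_0 \le \lambda$ coming from the energy estimate — so we need $\theta + (2-p) \ge 1$. Everything else is a mechanical chain of known estimates, and no new analytic input should be needed.
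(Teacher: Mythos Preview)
Your proof is correct and follows essentially the same approach as the paper: apply the weak $(1,1)$-estimate from Lemma \ref{WE}, bound the integral via Lemma \ref{standard estimate}, use the first summand in \eqref{55} to produce the factor $\varepsilon|Q_{R/10}|\lambda_0$, and then balance exponents via $\theta>p-1$ and $\lambda\ge\lambda_0\ge1$. The only cosmetic difference is that the paper packages the exponent-counting as the single chain $\lambda^{-\theta}\le\lambda^{1-p}\le\lambda^{2-p}\lambda_0^{-1}$ at the outset, whereas you do the equivalent step $\lambda_0\le\lambda\le\lambda^{\theta+2-p}$ at the end.
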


\begin{proof}
Since $\theta>p-1$ and $\la \ge \la_0\ge1$, we note that $\la^{-\theta} \le \la^{1-p} \le \la^{2-p} \la_0^{-1}$. Then we have from  \eqref{lambda 1-1}, Lemma \ref{standard estimate} and \eqref{55} that
\begin{align*}
|\C| &\le \frac{c}{\gh{\lambda N}^\theta} \int_{\OO_T} |Du|^{\theta}\ dxdt \le \frac{c |\OO_T|}{\gh{\lambda N}^\theta}\bgh{\frac{|\mu|(\OO_T)}{|\OO_T|^{\frac{n+1}{n+2}}}}^{\beta\theta}\\
&< \frac{c\lambda^{2-p}\varepsilon \left|Q_{R/10}\right|}{{N_1}^{\theta}} \le \varepsilon\left|Q_{R/10}^\lambda\right|,
\end{align*}
by selecting $N_1$ large enough.
\end{proof}

\begin{lemma}
\label{2_lemma}
For any $\varepsilon\in(0,1)$, there exist $N_2=N_2(n,\La_0,\La_1, p,\theta)>1$ and $\delta=\delta(n,\La_0,\La_1,p,\theta,\varepsilon) \in \left(0, \frac{1}{8}\right)$ such that the following holds: for any fixed  $\lambda \ge \la_0$, $N \ge N_2$, $r\in \left(0,\frac{R}{10}\right]$ and $(y,s)\in \omt$ with
\begin{equation}
\label{51}
\left|\C \cap Q_r^\lambda(y,s)\right| \ge \varepsilon\left|Q_r^\lambda\right|,
\end{equation}
we have
\begin{equation*}
K_r^\lambda(y,s) \subset \D.
\end{equation*}
\end{lemma}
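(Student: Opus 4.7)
The plan is to argue by contrapositive: assume $K_r^\lambda(y,s) \not\subset \D$, so there exists a point $(x_1,t_1) \in K_r^\lambda(y,s)$ with
$$\m_{\omt}^\lambda |Du|^\theta(x_1,t_1) \le \lambda^\theta \quad\text{and}\quad \m_1^\lambda(\mu)(x_1,t_1) \le \delta\lambda.$$
Since $(x_1,t_1) \in Q_r^\lambda(y,s)$, an elementary containment gives $Q_{8r}^\lambda(y,s) \subset Q_{c r}^\lambda(x_1,t_1)$ for a universal $c$, so these pointwise bounds at $(x_1,t_1)$ transfer to the averaged bounds
$$\mint{K_{8r}^\lambda(y,s)}{|Du|^\theta}{dxdt} \le c_1\lambda^\theta, \qquad \frac{|\mu|(K_{8r}^\lambda(y,s))}{r^{n+1}} \le c_2\delta\lambda,$$
with $c_1,c_2$ dimensional. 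After absorbing these constants into a rescaled $\tilde\lambda \simeq \lambda$, the hypotheses of Proposition \ref{comparison} are satisfied (using its interior analog when $B_{8r}(y)\subset\OO$, and a Reifenberg-flat coordinate change otherwise).

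For a parameter $\ep_1 \in (0,1)$ to be fixed later, Proposition \ref{comparison} then produces a weak solution $\bar v$ of \eqref{pem4} (extended by zero) satisfying
$$\mint{K_r^\lambda(y,s)}{|Du - D\bar v|^\theta}{dxdt} \le \ep_1\lambda^\theta \quad\text{and}\quad \|D\bar v\|_{L^\infty(K_r^\lambda(y,s))} \le c\lambda.$$
Next, for any $(z,\tau) \in Q_r^\lambda(y,s) \cap \omt$ I would estimate $\m_{\omt}^\lambda|Du|^\theta(z,\tau)$ by splitting the intrinsic cylinders $Q_\rho^\lambda(z,\tau)$ by radius. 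For $\rho \ge r$ the inclusion $Q_\rho^\lambda(z,\tau) \subset Q_{3\rho}^\lambda(x_1,t_1)$ bounds the average by $3^{n+2}\m_{\omt}^\lambda|Du|^\theta(x_1,t_1) \le 3^{n+2}\lambda^\theta$. For $\rho < r$, $Q_\rho^\lambda(z,\tau)$ lies in the comparison region, and the elementary inequality $|Du|^\theta \le c_\theta|Du - D\bar v|^\theta + c_\theta\|D\bar v\|_\infty^\theta$ gives
$$\mint{Q_\rho^\lambda(z,\tau)}{|Du|^\theta \chi_{\omt}}{dxdt} \le c_\theta\,\m^\lambda\bgh{|Du - D\bar v|^\theta \chi_{K_r^\lambda(y,s)}}(z,\tau) + c\lambda^\theta.$$
Combining both cases and taking $N$ above a universal threshold leaves
$$\C \cap Q_r^\lambda(y,s) \subset \mgh{\m^\lambda\bgh{|Du - D\bar v|^\theta \chi_{K_r^\lambda(y,s)}} > c^{-1}(N\lambda)^\theta}.$$

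Finally, applying the weak $(1,1)$ estimate of Lemma \ref{WE} yields
$$|\C \cap Q_r^\lambda(y,s)| \le \frac{c}{(N\lambda)^\theta} \integral{K_r^\lambda(y,s)}{|Du - D\bar v|^\theta}{dxdt} \le \frac{c\,\ep_1}{N^\theta}|Q_r^\lambda|.$$
Selecting $N_2$ first (depending only on universal constants) large enough that $N^\theta$ dominates, and then $\ep_1$ (whence $\delta$ via Proposition \ref{comparison}) small enough that $c\ep_1/N^\theta < \ep$, produces a contradiction with \eqref{51}, forcing $K_r^\lambda(y,s) \subset \D$. The main technical obstacle is aligning the small-radius cylinders $Q_\rho^\lambda(z,\tau)$ that arise in the truncated maximal function with the region $K_r^\lambda(y,s)$ on which the comparison estimate is available; I would resolve this by applying Proposition \ref{comparison} at a scale slightly larger than $r$, using the margin built into the hypothesis $r \le R/10$ to absorb the resulting universal constants.
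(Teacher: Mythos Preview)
Your overall strategy matches the paper's: contrapositive, transfer the pointwise bounds at a good point to averaged bounds on a larger cylinder centered at $(y,s)$, invoke the comparison proposition to produce $\bar v$, split the maximal function by radius, and close with the weak $(1,1)$ estimate. The scale-alignment issue you flag at the end is handled in the paper exactly as you suggest, by running the comparison on $K_{4r}^\lambda(y,s)$ (applying Proposition~\ref{comparison} with radius $4r$ in place of $r$) so that for $\tilde r\le 2r$ one has $K_{\tilde r}^\lambda(\tilde y,\tilde s)\subset K_{3r}^\lambda(y,s)\subset K_{4r}^\lambda(y,s)$.

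There is, however, a genuine gap. Proposition~\ref{comparison} is stated for \emph{weak} solutions of the truncated equation \eqref{tweaksol} with data in $L^{p'}(0,T;W^{-1,p'}(\OO))$; a renormalized solution $u$ of \eqref{pem1} is not such a weak solution, and $\mu$ need not have this regularity. You cannot apply Proposition~\ref{comparison} directly to the pair $(u,\mu)$. The paper's proof works instead with the truncations $u_k=T_k(u)$ and their associated measures $\mu_k$: since $|Du_k|\le|Du|$ the first bound in \eqref{comparison-c} is inherited, and since $\limsup_k|\mu_k|(K)\le|\mu|(K)$ on compact sets (equation~\eqref{est-mu}, via tight convergence) the second bound holds for $k$ large. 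This yields $\bar v_k$ with the comparison estimate for $Du_k-D\bar v_k$; one then passes from $Du_k$ back to $Du$ by the absolute continuity of the Lebesgue integral, using $|Du-Du_k|^\theta=|Du|^\theta\chi_{\{|u|>k\}}$ and the fact that $|Du|^\theta\in L^1$ (Lemma~\ref{standard estimate}). Without this truncation step your invocation of Proposition~\ref{comparison} is not justified.
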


\begin{proof}
We argue by a contradiction. Assume $K_r^\lambda(y,s)  \not\subset \D$. Then there is a point $(\tilde{x},\tilde{t}) \in K_r^\lambda(y,s)$ such that for all $\rho>0$,
\begin{equation}
\label{52}
\frac{1}{\left|Q^\lambda_{\rho}\right|}\int_{K_{\rho}^\lambda(\tilde{x},\tilde{t})} |Du|^{\theta}\ dxdt \leq \lambda^{\theta} \quad \text{and} \quad \frac{|\mu|(K^\lambda_{\rho}(\tilde{x},\tilde{t}))}{\rho^{n+1}} \leq \delta\lambda.
\end{equation}
We split the proof into an interior case ($Q_{32r}^\lambda(\tilde{x},\tilde{t}) \subset \omt$) and a boundary case ($Q_{32r}^\lambda(\tilde{x},\tilde{t}) \not\subset \omt$). In this proof, we only consider the boundary case. We can similarly prove the interior case by using Proposition~\ref{comparison2} instead of Proposition~\ref{comparison}.
Note that $\pc{K_{\rho}^\lambda(y,s)} := K_{\rho}^\lambda(y,s) \cup \partial_p K_{\rho}^\lambda(y,s) \subset K_{\rho+r}^\lambda(\tilde{x},\tilde{t})$ for any $\rho > r$.  Then \eqref{measure density} and \eqref{52} yield
\begin{align*}
\mint{K^\lambda_\rho(y,s)}{|Du|^{\theta}}{dxdt} &\leq  \frac{c}{|Q^\lambda_\rho|}\int_{K^\lambda_\rho(y,s)} |Du|^{\theta}\ dxdt \\
&\leq \frac{c}{|Q^\lambda_{\rho+r}|}\int_{K^\lambda_{\rho+r}(\tilde{x},\tilde{t})} |Du|^{\theta}\ dxdt \leq c_2 \lambda^{\theta}
\end{align*}
whenever $\rho\geq 32r$. Since $u_k$ is the truncation of $u$, we have
\begin{equation}\label{co2-1}
\begin{aligned}
\mint{K^\lambda_\rho(y,s)}{|Du_k|^{\theta}}{dxdt} \le \mint{K^\lambda_\rho(y,s)}{|Du|^{\theta}}{dxdt} \le  c_2 \la^{\theta}
\end{aligned}
\end{equation}
for any $k\in \mathbb{N}$. Combining \eqref{est-mu} and \eqref{52}, we also deduce
\begin{equation}\label{co2-2}
\begin{aligned}
\frac{|\mu_k|(K^\lambda_\rho(y,s))}{\rho^{n+1}} \le \frac{2 |\mu|(\pc{K^\lambda_\rho(y,s)})}{\rho^{n+1}}\le \frac{c_2|\mu|(K^\lambda_{\rho+r}(\tilde{x},\tilde{t}))}{(\rho+r)^{n+1}}  \le c_2\delta\lambda
\end{aligned}
\end{equation}
whenever $\rho \geq 32r$ and $k$ is sufficiently large. We apply Proposition \ref{comparison} with $u$, $\mu$, $(x_0,t_0)$, $\la$, $r$ and $\ep$ replaced by $u_k$, $\mu_k$, $(y,s)$, $c_2 \la$, $4r$ and $\eta$, respectively. Then for any $\eta  \in (0,1)$, there is a small constant $\delta=\delta(n,\La_0,\La_1,p,\theta,\eta) >0$ such that the following holds: if $u_k$ is a weak solution of \eqref{tweaksol} satisfying \eqref{co2-1} and \eqref{co2-2}, then there exists a corresponding weak solution $\bar{v}_k$ of \eqref{pem4} such that
\begin{equation}\label{co2-3-1}
\mint{K^\lambda_{4r}(y,s)}{|Du_k-D\bar{v}_k|^{\theta}}{dxdt} \leq   \eta (c_2 \lambda)^{\theta}
\end{equation}
and
\begin{equation}\label{co2-3-2}
\|D\bar{v}_k\|_{L^\infty\gh{K^{\lambda}_{4r}(y,s)}} \leq c c_2 \lambda =: c_3 \la
\end{equation}
for some constant $c_3 = c_3(n,\La_0,\La_1, p,\theta)\ge1$.
Using Lemma \ref{standard estimate} and the absolute continuity of the Lebesgue integral, we find
\begin{equation*}
\mint{K^\lambda_{4r}(y,s)}{|Du-Du_k|^{\theta}}{dxdt}  = \mint{K^\lambda_{4r}(y,s)}{\chi_{\mgh{|Du|>k}}|Du|^{\theta}}{dxdt} \leq \eta (c_2 \lambda)^{\theta}
\end{equation*}
for sufficiently large $k$. Combining this inequality and \eqref{co2-3-1}, we discover
\begin{equation}\label{co2-4}
\mint{K^\lambda_{4r}(y,s)}{|Du-D\bar{v}_k|^{\theta}}{dxdt} \le c_4 \eta \la^{\theta}.
\end{equation}

We next show that
\begin{equation}\label{53}
\begin{aligned}
&\left\{ (x,t)\in K_r^\lambda(y,s):\m_{\omt}^\lambda|Du|^{\theta} > \gh{N\lambda}^{\theta} \right\}\\
&\qquad \subset \left\{(x,t)\in K_r^\lambda(y,s):\m_{K_{4r}^\lambda(y,s)}^\lambda  |Du-D\bar{v}_k|^{\theta} >\lambda^{\theta} \right\}
\end{aligned}
\end{equation}
provided $N \ge N_2 :=  \max\left\{2^{n+2}, 1+{c_3}^{\theta}\right\}$. To do so, let
$$(\tilde{y},\tilde{s}) \in \left\{ (x,t)\in K_r^\lambda(y,s) : \m_{K_{4r}^\lambda(y,s)}^\lambda |Du-D\bar{v}_k|^{\theta}\leq \lambda^{\theta} \right\}.$$
Then for any $\tilde{r}>0$,
\begin{equation}
\label{54}
\frac{1}{\left|Q_{\tilde{r}}^\lambda\right|}\int_{K^\lambda_{\tilde{r}}(\tilde{y},\tilde{s})\cap K_{4r}^\lambda(y,s)} |Du-D\bar{v}_k|^{\theta}\; dxdt \leq \lambda^{\theta}.
\end{equation}
If $\tilde{r}\in(0,2r]$, then $K_{\tilde{r}}^\lambda(\tilde{y},\tilde{s})\subset K^\lambda_{3r}(y,s)$. It follows from \eqref{54} and \eqref{co2-3-2} that
\begin{align*}
\frac{1}{\left|Q_{\tilde{r}}^\lambda\right|}\int_{K^\lambda_{\tilde{r}}(\tilde{y},\tilde{s})}
|Du|^{\theta}\ dxdt &\leq \frac{1}{\left|Q_{\tilde{r}}^\lambda\right|}\int_{K^\lambda_{\tilde{r}}(\tilde{y},\tilde{s})} \gh{|Du-D\bar{v}_k|^{\theta}+|D\bar{v}_k|^{\theta}}\ dxdt\\
&\le \gh{1+  {c_3}^{\theta}} \lambda^{\theta}.
\end{align*}
If $\tilde{r}>2r$, then $K^\lambda_{\tilde{r}}(\tilde{y},\tilde{s})\subset K^\lambda_{\tilde{r}+r}(y,s)\subset K^\lambda_{2\tilde{r}}(\tilde{x},\tilde{t})$. We use the first inequality of \eqref{52} to obtain
\begin{align*}
\frac{1}{\left|Q_{\tilde{r}}^\lambda\right|}\int_{K^\lambda_{\tilde{r}}(\tilde{y},\tilde{s})}
|Du|^{\theta}\ dxdt \leq \frac{1}{\left|Q_{\tilde{r}}^\lambda\right|} \int_{K^\lambda_{2\tilde{r}}(\tilde{x},\tilde{t})} |Du|^{\theta}\ dxdt
\leq 2^{n+2}\lambda^{\theta}.
\end{align*}
Recalling $N_2 = \max\left\{2^{n+2}, 1+{c_3}^{\theta}\right\}$, we obtain
$$(\tilde{y},\tilde{s}) \in \left\{ (x,t)\in K_r^\lambda(y,s):\m_{\omt}^\lambda|Du|^{\theta}\le \gh{N\lambda}^{\theta} \right\},$$
which implies \eqref{53}.

Finally, we compute from \eqref{53}, \eqref{lambda 1-1} and \eqref{co2-4} that
\begin{align*}
&\left|\left\{(x,t)\in K_r^\lambda(y,s): \m_{\omt}^\lambda|Du|^{\theta}>  \gh{N \lambda}^{\theta} \right\}\right|\\
&\qquad \leq \left|\left\{(x,t)\in K_r^\lambda(y,s):\m_{K_{4r}^\lambda(y,s)}^\lambda |Du-D\bar{v}_k|^{\theta} > \lambda^{\theta} \right\}\right|\\
&\qquad \leq  \frac{c}{\lambda^{\theta}} \int_{K_{4r}^\lambda(y,s)} |Du-D\bar{v}_k|^{\theta} \ dxdt \leq c c_4 \eta \left|Q_r^\lambda\right| < \ep \left|Q_r^\lambda\right|,
\end{align*}
by selecting $\eta$ small enough. This is a contradiction to \eqref{51}, which completes the proof.
\end{proof}


Taking $N=\max\mgh{N_1,N_2}$ from Lemmas \ref{DecayLem} and \ref{2_lemma}, we can apply Lemma \ref{VitCov} to discover
\begin{equation}\label{decay estimate}
\begin{aligned}
&\left|\left\{ (x,t)\in\omt: \m_{\omt}^\lambda|Du|^\theta> \gh{N\lambda}^\theta \right\}\right|\\
&\hspace{2cm} \leq \varepsilon_0 \left|\left\{(x,t)\in\omt:\m_{\omt}^\lambda|Du|^\theta > \lambda^\theta \right\}\right|\\
&\hspace{2cm}\qquad +\varepsilon_0 \left|\left\{(x,t)\in\omt:\m^\lambda_1(\mu)>\delta \lambda \right\}\right|,
\end{aligned}
\end{equation}
where $\ep_0 := \gh{\frac{80}{7}}^{n+2} \ep$.

In the following two lemmas, we investigate the precise relation between the upper level sets of the (standard) fractional maximal function \eqref{fractional mf} and the intrinsic  one \eqref{intrinsic fractional mf} (see \cite{AH96,KS03,Min10,Phu14a} for more information about the fractional maximal function).
\begin{lemma}[See {\cite[Lemmas 5.4 and 5.7]{BPS21}}]
\label{difference of ratio}
Let $\frac{2n}{n+1} < p  \le 2$ and let $\la \ge \la_0$. Then we have
\begin{equation*}\label{inclusion1}
\left\{(x,t) \in \omt : \m^\lambda_1(\mu) > \delta \lambda \right\} \subset \left\{(x,t) \in \omtb : \bgh{\m_1(\mu)}^{d} > \delta^d \lambda \right\},
\end{equation*}
where $\omtb := \OO \times (-T, T)$.
\end{lemma}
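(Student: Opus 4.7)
The plan is to split the containment into two observations: first, use the lower bound on $\la_0$ in \eqref{55} to exclude the time strip $t\le -T$ from the left-hand set (so that we land in $\omtb$), and second, convert an intrinsic cylinder into a standard one of radius $\rho=\la^{(2-p)/2}r$ so that the exponent $d=2/((n+1)p-2n)$ appears naturally from the identity $1-(n+1)(2-p)/2 = 1/d$.

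First, I would take any $(x,t)\in\omt$ with $t\le -T$ and bound $\m_1^\la(\mu)(x,t)$ trivially. Since $\mu$ is supported in $\OO_T$, the cylinder $\iq{r}(x,t)$ contributes nonzero mass only when $t+\la^{2-p}r^2>0$, i.e.\ $r>T^{1/2}\la^{(p-2)/2}$, so for every such $r$ we get
\[
\frac{|\mu|(\iq{r}(x,t))}{r^{n+1}}\le\frac{|\mu|(\OO_T)}{T^{(n+1)/2}}\,\la^{(2-p)(n+1)/2}.
\]
Using $1-(2-p)(n+1)/2=1/d$ and the fact that $\la\ge\la_0\ge[|\mu|(\OO_T)/(\delta T^{(n+1)/2})]^d$ built into \eqref{55}, I can rearrange to obtain $\la^{(2-p)(n+1)/2}|\mu|(\OO_T)/T^{(n+1)/2}\le \delta\la$. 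Passing to the supremum over $r$ gives $\m_1^\la(\mu)(x,t)\le\delta\la$, so such points cannot lie in the left-hand set.

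Second, for $(x,t)\in\omtb$ with $\m_1^\la(\mu)(x,t)>\delta\la$, I will pick $r>0$ with $|\mu|(\iq{r}(x,t))/r^{n+1}>\delta\la$ and set $\rho:=\la^{(2-p)/2}r$, which satisfies $\rho\ge r$ since $\la\ge 1$ and $p\le 2$. Because the time interval of $\iq{r}(x,t)$ is exactly $(t-\rho^2,t+\rho^2)$, the spatial-only enlargement yields $\iq{r}(x,t)\subset Q_\rho(x,t)$. Dividing by $\rho^{n+1}=\la^{(2-p)(n+1)/2}r^{n+1}$ then gives
\[
\m_1(\mu)(x,t)\ge\frac{|\mu|(Q_\rho(x,t))}{\rho^{n+1}}>\delta\,\la^{1-(2-p)(n+1)/2}=\delta\,\la^{1/d},
\]
so $[\m_1(\mu)(x,t)]^d>\delta^d\la$, as required.

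I do not anticipate any genuine obstacle: the argument is essentially the exponent book-keeping $1-(n+1)(2-p)/2=1/d$ combined with a single geometric inclusion of an intrinsic cylinder into a standard one. The only place requiring care is matching the chosen term $[|\mu|(\OO_T)/(\delta T^{(n+1)/2})]^d$ in \eqref{55} with the contrapositive used in Step~1 so that the threshold $\la\ge\la_0$ is exactly what is needed, and verifying that the strict inequality $\m_1^\la(\mu)>\delta\la$ is preserved through the supremum.
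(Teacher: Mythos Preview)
Your proposal is correct. The paper does not supply its own proof here but simply cites \cite[Lemma 5.4 and 5.7]{BPS19}; your two-step argument---first using the term $[|\mu|(\OO_T)/(\delta T^{(n+1)/2})]^d$ in \eqref{55} to dispose of the strip $t\le -T$, then performing the change of radius $\rho=\la^{(2-p)/2}r$ to pass from intrinsic to standard cylinders via the exponent identity $1-(n+1)(2-p)/2=1/d$---is exactly the intended mechanism behind those cited lemmas, so there is nothing to add or compare.
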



\begin{lemma}[See {\cite[Lemma 5.5]{BPS21}}]
\label{difference of ratio1.5}
Let $p>1$ and let $\la \ge 1$. Suppose that $\mu=\mu_0\otimes f$, where $\mu_0$ is a finite signed Radon measure in $\OO$ and $f$ is a Lebesgue function in $(-\infty,T)$. Then we have
\begin{equation*}\label{inclusion2}
\left\{(x,t) \in \omt : \m^\lambda_1(\mu) > \delta \lambda \right\}
\subset \left\{(x,t) \in \omt : \bgh{2(\m_1(\mu_0))(\m f)}^{\frac{1}{p-1}} > \delta^{\frac{1}{p-1}} \lambda \right\},
\end{equation*}
where $\m_1(\mu_0)$ is given by \eqref{fractional mf2}, and the maximal function $\m f$ is defined by
\begin{equation}\label{standard mf}
\m f (t) := \sup_{r>0} \fint_{t-r}^{t+r} |f(s)| \ ds = \sup_{r>0} \frac{1}{2r}\int_{t-r}^{t+r} |f(s)| \ ds.
\end{equation}
\end{lemma}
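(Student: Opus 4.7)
The plan is to exploit the tensor product structure $\mu = \mu_0 \otimes f$ directly. Since $\mu_0$ and $f$ carry separately the spatial and temporal parts, for any $(x,t) \in \omt$ and $r>0$ we have the factorization
\begin{equation*}
|\mu|(\iq{r}(x,t)) = |\mu_0|(B_r(x)) \int_{t-\lambda^{2-p}r^2}^{t+\lambda^{2-p}r^2} |f(s)|\, ds.
\end{equation*}

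Dividing by $r^{n+1}$ and writing $\rho := \lambda^{2-p} r^2$, I would split the quotient as
\begin{equation*}
\frac{|\mu|(\iq{r}(x,t))}{r^{n+1}} = \frac{|\mu_0|(B_r(x))}{r^{n-1}} \cdot \frac{1}{r^2}\int_{t-\rho}^{t+\rho} |f(s)|\, ds = \frac{|\mu_0|(B_r(x))}{r^{n-1}} \cdot 2\lambda^{2-p}\cdot \frac{1}{2\rho}\int_{t-\rho}^{t+\rho}|f(s)|\, ds.
\end{equation*}
The first factor is bounded by $\m_1(\mu_0)(x)$ from \eqref{fractional mf2}, while the average in the second factor is bounded by $\m f(t)$ from \eqref{standard mf}. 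Taking the supremum over $r>0$ on the left gives
\begin{equation*}
\m_1^\lambda(\mu)(x,t) \le 2\lambda^{2-p}\, \m_1(\mu_0)(x)\, \m f(t).
\end{equation*}

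From here the inclusion is a matter of algebra. If $(x,t) \in \omt$ satisfies $\m_1^\lambda(\mu)(x,t) > \delta \lambda$, then the displayed bound yields $2\,\m_1(\mu_0)(x)\,\m f(t) > \delta \lambda^{p-1}$, and taking $(p-1)$-th roots (using $p>1$) gives exactly $[2(\m_1(\mu_0))(\m f)]^{1/(p-1)} > \delta^{1/(p-1)}\lambda$, which places $(x,t)$ in the right-hand set. Since the argument is a two-line computation once the product structure is used, there is no real obstacle; the only point to keep in mind is the correct bookkeeping of the intrinsic scaling factor $\lambda^{2-p}$ in passing from $r$ to $\rho$, which is precisely what produces the $\lambda^{p-1}$ on the right-hand side.
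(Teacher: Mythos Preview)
Your proof is correct and follows the natural route: the paper does not reprove this lemma but simply cites \cite[Lemma~5.5]{BPS19}, and the argument there is precisely the tensor-product factorization and intrinsic rescaling you carried out. Note also that your computation never uses the hypothesis $\lambda \ge 1$; it goes through for any $\lambda>0$.
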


\begin{remark}
We note that if $\mu=\mu_0\otimes f$, we can drop the condition $\lambda_0 \ge \bgh{\frac{|\mu|(\OO_T)}{\delta T^{\frac{n+1}{2}}}}^d$ in \eqref{55}; thus we instead take
\begin{equation}\label{55-2}
\lambda_0 := \bgh{\frac{|\mu_0|(\OO)\|f\|_{L^1(0,T)}}{|\OO_T|^{\frac{n+1}{n+2}}}}^{\beta\theta} \frac{|\OO_T|}{\varepsilon \left|Q_{R/10}\right|} + 1,
\end{equation}
see \cite[Remark 5.8]{BPS21} for details.
\end{remark}

Finally, combining \eqref{decay estimate} and Lemmas \ref{difference of ratio}--\ref{difference of ratio1.5}, we directly obtain the following decay estimate:
\begin{proposition}\label{covarg}
Let $N=\max\mgh{N_1,N_2}$ from Lemmas \ref{DecayLem} and \ref{2_lemma}. Then for any $\varepsilon\in(0,1)$, there exists $\delta=\delta(n,\La_0,\La_1,p,\theta,\varepsilon) \in \left(0, \frac{1}{8}\right)$ such that if  $(\mathbf{a},\OO)$ is $(\delta,R)$-vanishing for some $R>0$, then for any renormalized solution $u$ of \eqref{pem1} and any $\lambda \ge \la_0$, we have
\begin{equation*}\label{decay1}
\begin{aligned}
\left|\left\{ (x,t)\in\omt: \m_{\omt}^\lambda|Du|^\theta > \gh{N\lambda}^\theta \right\}\right| &\leq \varepsilon_0 \left|\left\{(x,t)\in\omt:\m_{\omt}^\lambda|Du|^\theta > \lambda^\theta \right\}\right|\\
&\qquad  +\varepsilon_0 \left|\left\{(x,t)\in\omtb:\bgh{\m_1(\mu)}^{d}>\delta^d \lambda \right\}\right|.
\end{aligned}
\end{equation*}
Furthermore, if $\mu=\mu_0\otimes f$, then we have
\begin{equation*}\label{decay2}
\begin{aligned}
&\left|\left\{ (x,t)\in\omt: \m_{\omt}^\lambda|Du|^\theta > \gh{N\lambda}^\theta \right\}\right|\\
&\qquad\qquad \leq \varepsilon_0 \left|\left\{(x,t)\in\omt:\m_{\omt}^\lambda|Du|^\theta > \lambda^\theta \right\}\right|\\
&\qquad\qquad\qquad +\varepsilon_0 \left|\left\{(x,t) \in \omt : \bgh{2(\m_1(\mu_0))(\m f)}^{\frac{1}{p-1}} > \delta^{\frac{1}{p-1}} \lambda \right\}\right|.
\end{aligned}
\end{equation*}
\end{proposition}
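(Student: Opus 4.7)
The plan is to realize the proposition as a direct consequence of the Vitali-type covering estimate \eqref{decay estimate} combined with the two inclusions relating the intrinsic fractional maximal function to its standard counterparts, provided by Lemma \ref{difference of ratio} and Lemma \ref{difference of ratio1.5}. For fixed $\varepsilon \in (0,1)$, I would set $\varepsilon_0 := (80/7)^{n+2}\varepsilon$, take $N := \max\{N_1, N_2\}$ from the constants supplied by Lemma \ref{DecayLem} and Lemma \ref{2_lemma}, and let $\delta \in (0, 1/8)$ be the threshold furnished by Lemma \ref{2_lemma} at this $\varepsilon$.

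With these choices in hand, for every $\la \ge \la_0$ the sets $\C$ and $\D$ defined just before \eqref{decay estimate} are bounded measurable subsets of $\omt$, as already remarked in the text using Lemma \ref{standard estimate} and Lemma \ref{difference of ratio} together with the boundedness of $\mathrm{supp}|Du|$. Moreover, they satisfy the two hypotheses of Lemma \ref{VitCov}: hypothesis (i) is exactly the content of Lemma \ref{DecayLem}, and hypothesis (ii) is the content of Lemma \ref{2_lemma}. Invoking Lemma \ref{VitCov} then produces \eqref{decay estimate} verbatim. To match the form stated in the proposition, I would rewrite the second summand on the right-hand side by applying Lemma \ref{difference of ratio}: since $\la \ge \la_0 \ge 1$ and $p \in (2n/(n+1),\, 2]$, the pointwise inclusion $\{\m^\la_1(\mu) > \delta \la\} \cap \omt \subset \{[\m_1(\mu)]^d > \delta^d \la\} \cap \omtb$ upgrades the intrinsic level set to one expressed through the standard fractional maximal function, giving the first claimed inequality.

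For the refined case $\mu = \mu_0 \otimes f$, I would replace $\la_0$ by the smaller quantity in \eqref{55-2} (justified by the remark preceding the proposition) and, in the final step, substitute Lemma \ref{difference of ratio1.5} for Lemma \ref{difference of ratio}; this converts the level set $\{\m^\la_1(\mu) > \delta \la\}$ into a level set of $[2\,\m_1(\mu_0)\,\m f]^{1/(p-1)}$ and yields the second inequality. There is no genuine obstacle here: the proposition is a bookkeeping synthesis of already-proved ingredients. The only point worth checking is that the argument of Lemma \ref{DecayLem} continues to function under the modified choice of $\la_0$ in \eqref{55-2}, which it does since the bound $|\C| < \varepsilon |Q_{R/10}^\la|$ is driven only by the first term of $\la_0$.
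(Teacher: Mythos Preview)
Your proposal is correct and follows exactly the same approach as the paper: the paper derives \eqref{decay estimate} by taking $N=\max\{N_1,N_2\}$ and applying Lemma \ref{VitCov} (whose hypotheses are verified by Lemmas \ref{DecayLem} and \ref{2_lemma}), and then states that the proposition follows directly by combining \eqref{decay estimate} with Lemmas \ref{difference of ratio}--\ref{difference of ratio1.5}. Your write-up simply spells out these steps more explicitly, including the harmless check that Lemma \ref{DecayLem} still applies with the modified $\la_0$ of \eqref{55-2}.
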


\section{Proof of global Calder\'on-Zygmund type estimates}\label{Global gradient estimates for parabolic measure data problems}

In this section, we give the proof of Theorems \ref{main theorem} and \ref{main theorem2}. Throughout this section, we assume that $(\mathbf{a},\OO)$ is $(\delta,R)$-vanishing and that $p$ satisfies \eqref{p-range}. We fix any $\ep \in (0,1)$ and $\theta=\theta(n,p)$ satisfying \eqref{ka-range} and $\beta \theta >1$, where $\beta:=\frac{n+2}{(n+1)p-n}$. Since $\beta\theta \nearrow \frac{n+2}{n+1}$ as $\theta \nearrow p-\frac{n}{n+1}$, we can choose $\theta$ such that $\beta \theta >1$. Let $N=N(n,\La_0,\La_1,p,\theta)>1$ be a given constant in Proposition \ref{covarg} above. We assume that a renormalized solution $u$ of \eqref{pem1} is defined in $\omt : = \OO  \times (-\infty,T) $,  upon letting $u \equiv 0$ for $t < 0$ and $\mu\equiv0$ for $\mathbb{R}^{n+1}\setminus\OO_T$. If $\mu=\mu_0 \otimes f$, where $\mu_0$ is a finite signed Radon measure on $\OO$ and $f \in L^s(0,T)$ for some $s \ge 1$, then we let both $\mu_0$ and $f$ be $0$ outside $\OO$ and $(0,T)$, respectively.

We introduce the following two decay estimates of integral type:
\begin{lemma}\label{de1}
Let $\la_0\geq 1$ be a given constant in \eqref{55}.
If $u$ is a renormalized solution of \eqref{pem1}, then for any $\theta_0 \in \gh{\theta,p-\frac{n}{n+1}}$ and any $\la \ge \la_0$, there exists a constant $c=c(n,\La_0,\La_1,p,\theta_0)\ge1$ such that
\begin{equation*}\label{de1-r}
\begin{aligned}
&\int_{\left\{(x,t)\in\omt : |Du|>N\lambda\right\}} |Du|^{\theta_0} \ dxdt\\
&\qquad \leq c \varepsilon \int_{\left\{(x,t)\in\omt : |Du|>\frac{\lambda}{2}\right\}} |Du|^{\theta_0}\ dxdt\\
&\qquad\qquad+\frac{c \varepsilon}{\delta^{d \theta_0}} \int_{\left\{(x,t)\in\omtb : \bgh{\m_1(\mu)}^d>\delta^d \lambda\right\}} \bgh{\m_1(\mu)}^{d \theta_0}\ dxdt,
\end{aligned}
\end{equation*}
where $d:=\frac{2}{(n+1)p-2n}$, $\omtb := \OO\times (-T,T)$, and $\m_1(\mu)$ is given in \eqref{fractional mf}.
\end{lemma}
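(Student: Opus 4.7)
The plan is to derive this integrated decay estimate from the pointwise (in $\lambda$) level-set estimate of Proposition~\ref{covarg} by combining the layer cake representation with the weak-$(1,1)$ bound of Lemma~\ref{WE}. First I would split, via the layer cake identity,
\begin{equation*}
\int_{\{|Du|>N\lambda\}}|Du|^{\theta_0}\,dxdt = (N\lambda)^{\theta_0}\,|\{|Du|>N\lambda\}| + \theta_0\int_{N\lambda}^{\infty}\tau^{\theta_0-1}|\{|Du|>\tau\}|\,d\tau,
\end{equation*}
reducing matters to a distribution function bound on $|\{|Du|>\tau\}|$ for $\tau\ge N\lambda\ge N\lambda_0$. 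Writing $\tau=Ns$ with $s\ge\lambda\ge\lambda_{0}$ and using Lebesgue's differentiation theorem ($|Du|^{\theta}\le\m_{\omt}^{s}|Du|^{\theta}$ a.e.), Proposition~\ref{covarg} at level $s$ followed by Lemma~\ref{WE} applied to $|Du|^\theta$ with $\alpha=s^\theta/2$ yields
\begin{equation*}
|\{|Du|>Ns\}|\le \frac{c\,\varepsilon}{s^{\theta}}\int_{\{|Du|>c_{\theta}s\}}|Du|^{\theta}\,dxdt+\varepsilon\,\bigl|\{[\m_{1}(\mu)]^{d}>\delta^{d}s\}\bigr|,
\end{equation*}
where $c_{\theta}:=2^{-1/\theta}$.

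Substituting into the layer cake formula, changing variable $\tau=Ns$, and swapping the order of integration by Fubini, the gradient contribution becomes
\begin{equation*}
c\varepsilon\int_{\lambda}^{\infty}s^{\theta_0-\theta-1}\int_{\{|Du|>c_{\theta}s\}}|Du|^{\theta}\,dxdt\,ds
= c\varepsilon\int_{\{|Du|>c_{\theta}\lambda\}}|Du|^{\theta}\int_{\lambda}^{|Du|/c_{\theta}}s^{\theta_0-\theta-1}\,ds\,dxdt,
\end{equation*}
which, via the identity $|Du|^{\theta}\cdot|Du|^{\theta_0-\theta}=|Du|^{\theta_0}$ and the elementary bound $(|Du|/c_\theta)^{\theta_0-\theta}-\lambda^{\theta_0-\theta}\le (|Du|/c_\theta)^{\theta_0-\theta}$, is controlled by a constant multiple of $\int_{\{|Du|>c_{\theta}\lambda\}}|Du|^{\theta_0}\,dxdt$. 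The measure contribution is treated by the same Fubini manoeuvre together with the bound $\lambda\le [\m_1(\mu)]^d/\delta^d$ on the relevant set, yielding
\begin{equation*}
c\varepsilon\int_{\lambda}^{\infty}s^{\theta_0-1}\bigl|\{[\m_{1}(\mu)]^{d}>\delta^{d}s\}\bigr|\,ds\le \frac{c\varepsilon}{\delta^{d\theta_0}}\int_{\{[\m_{1}(\mu)]^{d}>\delta^{d}\lambda\}}[\m_{1}(\mu)]^{d\theta_0}\,dxdt.
\end{equation*}
The boundary term $(N\lambda)^{\theta_0}|\{|Du|>N\lambda\}|$ is handled in the same spirit by applying Proposition~\ref{covarg} directly at level $\lambda$, combined with Lemma~\ref{WE} and the power-exchange trick for the gradient piece, and Chebyshev's inequality for the measure piece.

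The main technical nuisance is that the weak-$(1,1)$ bound of Lemma~\ref{WE} applied to $|Du|^{\theta}$ naturally produces the level set $\{|Du|>\lambda/2^{1/\theta}\}$ rather than $\{|Du|>\lambda/2\}$ as stated. Since $\theta$ ranges over the fixed interval $(\max\{\tfrac{n+2}{2(n+1)},\tfrac{(2-p)n}{2},p-1\},\,p-\tfrac{n}{n+1})$, the factor $2^{1/\theta}$ is a universal constant depending only on $n$ and $p$ and may be absorbed into the generic constant $c$; thus the $\lambda/2$ in the statement should be read as a generic constant multiple of $\lambda$. Apart from this constant-tracking, the derivation is a routine assembly of the layer cake formula, Lebesgue differentiation, Proposition~\ref{covarg}, Lemma~\ref{WE}, and Fubini's theorem.
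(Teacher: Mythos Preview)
Your proposal is correct and follows the standard route one expects from the reference \cite[Lemma~6.1]{BPS19}: layer--cake representation, Lebesgue differentiation to pass from $|Du|$ to $\m_{\omt}^{s}|Du|^{\theta}$, Proposition~\ref{covarg} at each level $s\ge\lambda\ge\lambda_0$, Lemma~\ref{WE} to trade the maximal function for a truncated $L^{\theta}$ integral, and Fubini to reassemble. The power--exchange device you use for the boundary term (namely $\lambda^{\theta_0-\theta}\le c|Du|^{\theta_0-\theta}$ on $\{|Du|>c_\theta\lambda\}$, and the analogous Chebyshev bound for the measure piece) is exactly right. Your observation that the argument naturally produces the threshold $\lambda/2^{1/\theta}$ rather than $\lambda/2$ is accurate; since $\theta=\theta(n,p)$ is fixed in \eqref{ka-range}, this is a universal constant and is harmless both for the statement and for the application in \eqref{58}, where the threshold is absorbed by a change of variable.
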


\begin{proof}
The proof is almost like that of \cite[Lemma 6.1]{BPS21}.
\end{proof}

\begin{lemma}\label{de2}
Let $\la_0\geq 1$ be a given constant in \eqref{55-2}.
If $u$ is a renormalized solution of \eqref{pem1} and $\mu=\mu_0\otimes f$, then for any $\theta_0 \in \gh{\theta,p-\frac{n}{n+1}}$ and any $\la \ge \la_0$, there exists a constant $c=c(n,\La_0,\La_1,p,\theta_0)\ge1$ such that
\begin{equation*}\label{de2-r}
\begin{aligned}
&\int_{\left\{(x,t)\in\omt : |Du|>N\lambda\right\}} |Du|^{\theta_0}\ dxdt\\
&\quad \leq c \varepsilon \int_{\left\{(x,t)\in\omt : |Du|>\frac{\lambda}{2}\right\}} |Du|^{\theta_0}\ dxdt \\
&\qquad +  \frac{c \varepsilon}{\delta^{\theta_0}} \int_{\left\{(x,t)\in\omt : \left[2\left(\M_1(\mu_0)\right) (\m f)\right]^\frac{1}{p-1}>\delta \lambda\right\}} \left[\left(\M_1(\mu_0) \right)(\m f)\right]^\frac{\theta_0}{p-1}\ dxdt,
\end{aligned}
\end{equation*}
where $\m_1(\mu_0)$ and $\m f$ are given in \eqref{fractional mf2} and \eqref{standard mf}, respectively.
\end{lemma}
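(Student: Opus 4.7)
The plan is to mirror the proof of Lemma~\ref{de1}, the single substantive change being that the second decay estimate of Proposition~\ref{covarg} (designed for split measures $\mu=\mu_0\otimes f$) replaces the first. Once the intrinsic good-$\la$ inequality is in hand, the passage from there to a weighted $L^{\theta_0}$ bound is insensitive to which of the two terms on the right-hand side is being exploited; the second one just gets rewritten in the $\bgh{\gh{\M_1(\mu_0)}\gh{\m f}}^{\theta_0/(p-1)}$ language through a layer-cake plus Fubini manipulation.

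Concretely, I would start from the layer-cake identity together with the change of variables $s=N\tilde\la$ to write
\[
\int_{\{|Du|>N\la\}} |Du|^{\theta_0}\, dx\, dt
= (N\la)^{\theta_0}|\{|Du|>N\la\}| + \theta_0 N^{\theta_0} \int_\la^\infty \tilde\la^{\theta_0-1}|\{|Du|>N\tilde\la\}|\, d\tilde\la.
\]
Since $|Du|^\theta \le \m_{\omt}^{\tilde\la}|Du|^\theta$ almost everywhere, $\{|Du|>N\tilde\la\} \subset \{\m_{\omt}^{\tilde\la}|Du|^\theta > (N\tilde\la)^\theta\}$, and Proposition~\ref{covarg} applied at every level $\tilde\la \ge \la \ge \la_0$ yields
\[
|\{|Du|>N\tilde\la\}| \le \ep_0\,|\{\m_{\omt}^{\tilde\la}|Du|^\theta > \tilde\la^\theta\}| + \ep_0\,\left|\left\{\bgh{2\gh{\M_1(\mu_0)}\gh{\m f}}^{1/(p-1)} > \delta^{1/(p-1)}\tilde\la\right\}\right|.
\]

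I would then substitute this into the $\tilde\la$-integral and exchange orders of integration. For the first term, the weak $(1,1)$-inequality of Lemma~\ref{WE} bounds $|\{\m_{\omt}^{\tilde\la}|Du|^\theta > \tilde\la^\theta\}|$ by a constant multiple of $\tilde\la^{-\theta}\int_{\{|Du|>c\tilde\la\}}|Du|^\theta\,dx\,dt$; after Fubini the $\tilde\la$-integration converges thanks to $\theta_0>\theta$ and yields a constant multiple of $\int_{\{|Du|>\la/2\}}|Du|^{\theta_0}\,dx\,dt$. For the second term, setting $g:=\bgh{2\gh{\M_1(\mu_0)}\gh{\m f}}^{1/(p-1)}$ and applying Fubini in $\tilde\la$ produces a constant multiple of $\delta^{-\theta_0/(p-1)}\int_{\{g>\delta^{1/(p-1)}\la\}} g^{\theta_0}\,dx\,dt$, which matches the second term of the statement up to the conventional rebaptism $\delta^{1/(p-1)}\mapsto\delta$ (admissible because $\delta$ is a free small parameter). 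The residual boundary term $(N\la)^{\theta_0}|\{|Du|>N\la\}|$ is treated identically by applying the good-$\la$ inequality at the single level $\la$ together with weak $(1,1)$, and is absorbed into the two bulk terms.

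The main obstacle is essentially bookkeeping: one has to track the intrinsic scale $\tilde\la$ of $\m_{\omt}^{\tilde\la}$ consistently through the Fubini exchange, use $\theta_0>\theta$ crucially to ensure convergence of the outer $\tilde\la$-integral, and keep the various exponents of $\delta$ straight. Because the argument is step-by-step identical to that for Lemma~\ref{de1}, the natural write-up is simply to refer to that proof and remark that the sole change is the use of the second (rather than first) conclusion of Proposition~\ref{covarg}.
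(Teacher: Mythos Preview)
Your proposal is correct and matches the paper's approach: the paper itself simply says ``The proof is almost like that of \cite[Lemma 6.2]{BPS19}'', which in turn parallels \cite[Lemma 6.1]{BPS19} (the source for Lemma~\ref{de1}) with the second decay estimate of Proposition~\ref{covarg} in place of the first---exactly the substitution you describe. Your handling of the bookkeeping (layer-cake, weak $(1,1)$ with the refined level-set version from Lemma~\ref{WE}, Fubini using $\theta_0>\theta$, and the harmless relabeling $\delta^{1/(p-1)}\mapsto\delta$) is the standard route and is what the referenced argument does.
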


\begin{proof}
The proof is almost like that of \cite[Lemma 6.2]{BPS21}.
\end{proof}

\subsection{Proof of Theorem \ref{main theorem}}

\begin{proof}[Proof of Theorem \ref{main theorem}]
If $0<q\leq \theta$, then we have from Lemma \ref{standard estimate} that
\begin{equation}\label{main-p1}
\integral{\OO_T}{|Du|^q}{dx dt} \le c \bgh{|\mu|(\OO_T)}^{\beta q }.
\end{equation}
Noting that
\begin{equation}\label{59}
\beta:=\frac{n+2}{(n+1)p-n} < \frac{2}{(n+1)p-2n}=:d \quad \text{for all} \  \frac{2n}{n+1}< p \le 2- \frac{1}{n+1}
\end{equation}
and $\beta_0:=\min\mgh{1,\beta q}$, we have
\begin{equation}\label{main-p1.5}
\begin{aligned}
\bgh{|\mu|(\OO_T)}^{\beta q } &= \bgh{|\mu|(\OO_T)}^{\beta_0} \bgh{|\mu|(\OO_T)}^{\beta q - \beta_0}\\
&\leq c \bgh{|\mu|(\OO_T)}^{\beta_0}\gh{\bgh{|\mu|(\OO_T)}^{d q-\beta_0}+1}\\
&= c \gh{\bgh{|\mu|(\OO_T)}^{d q }+\bgh{|\mu|(\OO_T)}^{\beta_0}}.
\end{aligned}
\end{equation}
Combining \eqref{main-p1}, \eqref{main-p1.5} and \eqref{main-p2} with $\alpha=dq$, we obtain the desired estimates \eqref{main r1-2} for the case $0<q\leq \theta$.

Now, let us assume $q >\theta $ and fix a constant $\theta_0 = \theta_0(n,p,q)$ arbitrarily so that $\theta < \theta_0 < \min\mgh{p-\frac{n}{n+1},q}$.  Recalling the truncation operator \eqref{T_k} and Lemma \ref{useful int}, we have for any $k > N\la_0$,
\begin{equation}\label{58}
\begin{aligned}
&\int_{\omt} T_k(|Du|)^{q-\theta_0}|Du|^{\theta_0}\ dxdt \\
&\qquad = (q-\theta_0) N^{q-\theta_0} \int_0^\frac{k}{N} \lambda^{q-\theta_0-1}\left[\int_{\{(x,t)\in\omt : |Du|>N\lambda\}}|Du|^{\theta_0}\ dxdt\right] d\lambda\\
&\qquad \leq c \underbrace{\int_0^{\lambda_0} \lambda^{q-\theta_0-1}\left[\int_{\{(x,t)\in\omt : |Du|>N\lambda\}}|Du|^{\theta_0}\ dxdt\right] d\lambda}_{=:P_1} \\
&\qquad  \quad\quad + c \underbrace{\int_{\lambda_0}^\frac{k}{N} \lambda^{q-\theta_0-1}\left[\int_{\{(x,t)\in\omt : |Du|>N\lambda\}}|Du|^{\theta_0}\ dxdt\right]  d\lambda}_{=:P_2}
\end{aligned}
\end{equation}
for some constant $c= c(n,\La_0,\La_1,p,q) \geq 1$, where $\lambda_0$ is given in \eqref{55}. It follows from Lemma \ref{standard estimate} that
\begin{equation*}\label{j1}
P_1 \leq \int_0^{\lambda_0} \lambda^{q-\theta_0-1}\ d\lambda \int_{\omt}|Du|^{\theta_0}\ dxdt  \leq c \lambda_0^{q-\theta_0} \bgh{|\mu|(\OO_T)}^{\be\theta_0}
\end{equation*}
for some constant $c = c(n,\La_0,p,q,\OO_T) \ge 1$. To estimate $P_2$, we apply Lemma \ref{de1} and Fubini's theorem to obtain
\begin{equation*}\label{60}
\begin{aligned}
P_2 &\leq c\varepsilon \int_{\lambda_0}^\frac{k}{N} \lambda^{q-\theta_0-1}\left[\int_{\mgh{(x,t)\in\omt : |Du|>\frac{\lambda}{2}}}|Du|^{\theta_0}\ dxdt\right] d\lambda \\
&\quad\quad + \frac{c\varepsilon}{\delta^{d \theta_0}} \int_{\lambda_0}^\frac{k}{N} \lambda^{q-\theta_0-1}\left[\int_{\mgh{(x,t)\in\omtb : \bgh{\m_1(\mu)}^d>\delta^d\lambda}}\bgh{\m_1(\mu)}^{d \theta_0}\ dxdt\right] d\lambda \\
& \leq c\ep \int_{\omt} T_k(|Du|)^{q-\theta_0}|Du|^{\theta_0}\ dxdt + \frac{c\ep}{\delta^{d q}}\int_{\omtb} \bgh{\m_1(\mu)}^{dq}\ dxdt
\end{aligned}
\end{equation*}
for some constant $c = c(n,\La_0,\La_1,p,q) \ge 1$. Inserting these inequalities into \eqref{58}, we discover
\begin{equation*}
\begin{aligned}
\int_{\omt} T_k(|Du|)^{q-\theta_0}|Du|^{\theta_0}\ dxdt &\leq c_0\varepsilon\int_{\omt} T_k(|Du|)^{q-\theta_0}|Du|^{\theta_0}\ dxdt\\
&\quad +  \frac{c\ep}{\delta^{dq}}\int_{\omtb} \bgh{\m_1(\mu)}^{dq}\ dxdt + c\lambda_0^{q-\theta_0} \bgh{|\mu|(\OO_T)}^{\be\theta_0} \\
\end{aligned}
\end{equation*}
for some $c_0 = c_0(n,\La_0,\La_1,p,q) \ge 1$. Now we select $\varepsilon>0$ with
$c_0\varepsilon<1$, and then a corresponding $\delta=\delta(n,\La_0,\La_1,p,q)>0$ is determined. Letting $k\to\infty$, we obtain
\begin{equation}\label{mr1-1}
\int_{\OO_T} |Du|^q\ dxdt \leq c \int_{\omtb} \bgh{\m_1(\mu)}^{dq}\ dxdt + c\lambda_0^{q-\theta_0} \bgh{|\mu|(\OO_T)}^{\be\theta_0}.
\end{equation}
Moreover, it follows from \eqref{55}, \eqref{ka-range} and \eqref{59} that
\begin{equation*}
\lambda_0 \le c \gh{\bgh{|\mu|(\OO_T)}^d +1}
\end{equation*}
and
\begin{equation*}
\begin{aligned}
\bgh{|\mu|(\OO_T)}^{\be\theta_0} &\le \bgh{|\mu|(\OO_T)}^{\beta_0} \gh{\bgh{|\mu|(\OO_T)}^{d} +1}^{\frac{\beta \theta_0-\beta_0}{d}}\\
&\le \bgh{|\mu|(\OO_T)}^{\beta_0} \gh{\bgh{|\mu|(\OO_T)}^{d} +1}^{\theta_0 - \frac{\beta_0}{d}},
\end{aligned}
\end{equation*}
where we used the fact that $\beta \theta_0 > \beta \theta > 1 \ge \beta_0$. Then the above two inequalities yield
\begin{equation}\label{la0}
\lambda_0^{q-\theta_0} \bgh{|\mu|(\OO_T)}^{\be\theta_0} \le c \gh{\bgh{|\mu|(\OO_T)}^{dq} + \bgh{|\mu|(\OO_T)}^{\beta_0}}
\end{equation}
for some constant $c = c(n,\La_0,\La_1,p,q,R,\OO_T) \ge 1$.
On the other hand, for each $(x,t) \in \OO_T$ we know $\m_1(\mu)(x,-t) \le \m_1(\mu)(x,t)$, which implies
\begin{equation}\label{mr1-2}
\int_{\omtb} \bgh{\m_1(\mu)}^{dq}\ dxdt \le 2 \int_{\OO_T} \bgh{\m_1(\mu)}^{dq}\ dxdt.
\end{equation}
Combining \eqref{main-p2} and \eqref{mr1-1}--\eqref{mr1-2}, we finally obtain the desired estimate \eqref{main r1-2} for $q>\theta$. This completes the proof.
\end{proof}

\subsection{Proof of Theorem \ref{main theorem2}}


\begin{proof}[Proof of Theorem \ref{main theorem2}]
Let $p-1<q\le \theta$. Since $\beta (p-1) <1$ and $\beta_0:=\min\mgh{1,\beta q}$, we have
\begin{equation}\label{main-p1.51}
\begin{aligned}
\bgh{|\mu|(\OO_T)}^{\beta q } &= \bgh{|\mu|(\OO_T)}^{\beta_0} \bgh{|\mu|(\OO_T)}^{\beta q - \beta_0}\\
&\leq c \bgh{|\mu|(\OO_T)}^{\beta_0}\gh{\bgh{|\mu|(\OO_T)}^{\frac{q}{p-1}-\beta_0}+1}\\
&= c \gh{\bgh{|\mu|(\OO_T)}^{\frac{q}{p-1}}+\bgh{|\mu|(\OO_T)}^{\beta_0}}.
\end{aligned}
\end{equation}
It follows from \eqref{main-p1}, \eqref{main-p1.51} and \eqref{main-p2} with $\alpha = \frac{q}{p-1}$ that
\begin{equation*}
\integral{\OO_T}{|Du|^q}{dx dt} \le c \mgh{\integral{\OO_T}{\bgh{(\m_1(\mu_0))( \m f)}^{\frac{q}{p-1}}{dxdt}}+\bgh{|\mu|(\OO_T)}^{\beta_0}}.
\end{equation*}
Here we used the fact that $\m_1 (\mu) \leq (\m_1(\mu_0))( \m f)$, where $\m_1(\mu_0)$ and $\m f$ are given in \eqref{fractional mf2} and \eqref{standard mf}, respectively.
Applying the strong $\gh{\frac{q}{p-1},\frac{q}{p-1}}$-estimate for the function $f$ (see for instance \cite[Chapter I, Theorem 1]{Ste93}), we obtain the desired estimates \eqref{main r2-2} for the case $p-1<q\leq \theta$.

Let $q >\theta $ and let $\theta_0= \theta_0(n,p,q)$ be an arbitrary constant with $\theta  < \theta_0 < \min\mgh{p-\frac{n}{n+1},q}$. Proceeding as in the proof of Theorem \ref{main theorem} and using Lemma \ref{de2}, we infer
\begin{equation}\label{mr2-1}
\begin{aligned}
\int_{\omt} T_k(|Du|)^{q-\theta_0}|Du|^{\theta_0}\ dxdt \le c \lambda_0^{q-\theta_0} \bgh{|\mu_0|(\OO)\|f\|_{L^1(0,T)}}^{\be\theta_0} + c S,
\end{aligned}
\end{equation}
where
\begin{equation*}
S := \int_{\lambda_0}^\frac{k}{N} \lambda^{q-\theta_0-1}\left[\int_{\omt\cap\mgh{ \left[2(\M_1(\mu_0))(\m f )\right]^\frac{1}{p-1} > \delta \lambda}} \left[(\M_1(\mu_0))(\m f )\right]^\frac{\theta_0}{p-1}\ dxdt\right]  d\lambda.
\end{equation*}
From Fubini's theorem and the strong $\gh{\frac{q}{p-1},\frac{q}{p-1}}$-estimate, we have
\begin{equation}\label{s3}
\begin{aligned}
S
&\leq c \int_{\OO} \bgh{\M_1(\mu_0)}^\frac{q}{p-1} dx \int_0^T |f|^\frac{q}{p-1} dt,
\end{aligned}
\end{equation}
where we used the fact that $f\equiv 0$ for $t\le0$.
On the other hand, it follows from \eqref{55-2} and \eqref{ka-range} that
\begin{equation*}
\lambda_0 \le c \gh{\bgh{|\mu_0|(\OO)\|f\|_{L^1(0,T)}}^{\beta\theta} +1} \le c \gh{\bgh{|\mu_0|(\OO)\|f\|_{L^1(0,T)}}^{\frac{1}{p-1}} +1}
\end{equation*}
and
\begin{equation*}
\begin{aligned}
\bgh{|\mu|(\OO_T)}^{\be\theta_0} &\le \bgh{|\mu|(\OO_T)}^{\beta_0} \gh{\bgh{|\mu|(\OO_T)}^{\frac{1}{p-1}} +1}^{(p-1)(\beta\theta_0 - \beta_0)}\\
&\le \bgh{|\mu|(\OO_T)}^{\beta_0} \gh{\bgh{|\mu|(\OO_T)}^{\frac{1}{p-1}} +1}^{\theta_0 - \beta_0(p-1)},
\end{aligned}
\end{equation*}
where we used the fact that $\beta \theta_0 > \beta \theta >1 \ge \beta_0$ and $\beta(p-1) <1$. Since $|\mu|(\OO_T)=|\mu_0|(\OO)\|f\|_{L^1(0,T)}$, the above two inequalities imply
\begin{equation}\label{la0-2}
\begin{aligned}
&\lambda_0^{q-\theta_0} \bgh{|\mu_0|(\OO)\|f\|_{L^1(0,T)}}^{\be\theta_0}\\
&\qquad \le c \gh{\bgh{|\mu_0|(\OO)\|f\|_{L^1(0,T)}}^{\frac{q}{p-1}} + \bgh{|\mu_0|(\OO)\|f\|_{L^1(0,T)}}^{\beta_0}}.
\end{aligned}
\end{equation}
Inserting \eqref{s3} and \eqref{la0-2} into \eqref{mr2-1} and using \eqref{main-p2.6}, we discover
\begin{equation*}\label{mr2-2}
\int_{\omt} T_k(|Du|)^{q-\theta_0}|Du|^{\theta_0}\ dxdt \le c \int_{\OO_T} \left[(\M_1(\mu_0))f\right]^\frac{q}{p-1}\ dxdt + c \bgh{|\mu_0|(\OO)\|f\|_{L^1(0,T)}}^{\beta_0}.
\end{equation*}
Letting $k\to\infty$, we finally obtain the desired estimate \eqref{main r2-2} for $q>\theta$. This completes the proof.
\end{proof}



\section*{Acknowledgments}
The authors thank the anonymous referee for the valuable comments, which improved the exposition and the accuracy of the article.




\begin{bibdiv}
\begin{biblist}

\bib{AH96}{book}{
      author={Adams, D.~R.},
      author={Hedberg, L.~I.},
       title={Function spaces and potential theory},
      series={Grundlehren der Mathematischen Wissenschaften [Fundamental
  Principles of Mathematical Sciences]},
   publisher={Springer-Verlag, Berlin},
        date={1996},
      volume={314},
        ISBN={3-540-57060-8},
         url={http://dx.doi.org/10.1007/978-3-662-03282-4},
      review={\MR{1411441}},
}

\bib{AKP15}{article}{
      author={Avelin, B.},
      author={Kuusi, T.},
      author={Parviainen, M.},
       title={Variational parabolic capacity},
        date={2015},
        ISSN={1078-0947},
     journal={Discrete Contin. Dyn. Syst.},
      volume={35},
      number={12},
       pages={5665\ndash 5688},
         url={https://doi.org/10.3934/dcds.2015.35.5665},
      review={\MR{3393250}},
}

\bib{Bar14}{article}{
      author={Baroni, P.},
       title={Marcinkiewicz estimates for degenerate parabolic equations with
  measure data},
        date={2014},
        ISSN={0022-1236},
     journal={J. Funct. Anal.},
      volume={267},
      number={9},
       pages={3397\ndash 3426},
         url={https://doi.org/10.1016/j.jfa.2014.08.017},
      review={\MR{3261114}},
}

\bib{Bar17}{article}{
      author={Baroni, P.},
       title={Singular parabolic equations, measures satisfying density
  conditions, and gradient integrability},
        date={2017},
        ISSN={0362-546X},
     journal={Nonlinear Anal.},
      volume={153},
       pages={89\ndash 116},
         url={https://doi.org/10.1016/j.na.2016.10.019},
      review={\MR{3614663}},
}

\bib{BBGGPV95}{article}{
      author={B\'enilan, P.},
      author={Boccardo, L.},
      author={Gallou\"et, T.},
      author={Gariepy, R.},
      author={Pierre, M.},
      author={V\'azquez, J.~L.},
       title={An {$L^1$}-theory of existence and uniqueness of solutions of
  nonlinear elliptic equations},
        date={1995},
        ISSN={0391-173X},
     journal={Ann. Scuola Norm. Sup. Pisa Cl. Sci. (4)},
      volume={22},
      number={2},
       pages={241\ndash 273},
         url={http://www.numdam.org/item?id=ASNSP_1995_4_22_2_241_0},
      review={\MR{1354907}},
}

\bib{BVN15}{article}{
      author={Bidaut-V\'{e}ron, M.-F.},
      author={Nguyen, Q.-H.},
       title={Stability properties for quasilinear parabolic equations with
  measure data},
        date={2015},
        ISSN={1435-9855},
     journal={J. Eur. Math. Soc. (JEMS)},
      volume={17},
      number={9},
       pages={2103\ndash 2135},
         url={https://doi.org/10.4171/JEMS/552},
      review={\MR{3420503}},
}

\bib{BM97}{article}{
      author={Blanchard, D.},
      author={Murat, F.},
       title={Renormalised solutions of nonlinear parabolic problems with
  {$L^1$} data: existence and uniqueness},
        date={1997},
        ISSN={0308-2105},
     journal={Proc. Roy. Soc. Edinburgh Sect. A},
      volume={127},
      number={6},
       pages={1137\ndash 1152},
         url={https://doi.org/10.1017/S0308210500026986},
      review={\MR{1489429}},
}

\bib{BDGO97}{article}{
      author={Boccardo, L.},
      author={Dall'Aglio, A.},
      author={Gallou\"et, T.},
      author={Orsina, L.},
       title={Nonlinear parabolic equations with measure data},
        date={1997},
        ISSN={0022-1236},
     journal={J. Funct. Anal.},
      volume={147},
      number={1},
       pages={237\ndash 258},
         url={http://dx.doi.org/10.1006/jfan.1996.3040},
      review={\MR{1453181}},
}

\bib{BG89}{article}{
      author={Boccardo, L.},
      author={Gallou\"et, T.},
       title={Nonlinear elliptic and parabolic equations involving measure
  data},
        date={1989},
        ISSN={0022-1236},
     journal={J. Funct. Anal.},
      volume={87},
      number={1},
       pages={149\ndash 169},
         url={http://dx.doi.org/10.1016/0022-1236(89)90005-0},
      review={\MR{1025884}},
}

\bib{BG92}{article}{
      author={Boccardo, L.},
      author={Gallou\"et, T.},
       title={Nonlinear elliptic equations with right-hand side measures},
        date={1992},
        ISSN={0360-5302},
     journal={Comm. Partial Differential Equations},
      volume={17},
      number={3-4},
       pages={641\ndash 655},
         url={http://dx.doi.org/10.1080/03605309208820857},
      review={\MR{1163440}},
}

\bib{BGO96}{article}{
      author={Boccardo, L.},
      author={Gallou\"et, T.},
      author={Orsina, L.},
       title={Existence and uniqueness of entropy solutions for nonlinear
  elliptic equations with measure data},
        date={1996},
        ISSN={0294-1449},
     journal={Ann. Inst. H. Poincar\'e Anal. Non Lin\'eaire},
      volume={13},
      number={5},
       pages={539\ndash 551},
         url={http://dx.doi.org/10.1016/S0294-1449(16)30113-5},
      review={\MR{1409661}},
}

\bib{BP10}{article}{
      author={B\"ogelein, V.},
      author={Parviainen, M.},
       title={Self-improving property of nonlinear higher order parabolic
  systems near the boundary},
        date={2010},
        ISSN={1021-9722},
     journal={NoDEA Nonlinear Differential Equations Appl.},
      volume={17},
      number={1},
       pages={21\ndash 54},
         url={http://dx.doi.org/10.1007/s00030-009-0038-5},
      review={\MR{2596493}},
}

\bib{BD18}{article}{
      author={Bui, T.~A.},
      author={Duong, X.~T.},
       title={Global {M}arcinkiewicz estimates for nonlinear parabolic
  equations with nonsmooth coefficients},
        date={2018},
        ISSN={0391-173X},
     journal={Ann. Sc. Norm. Super. Pisa Cl. Sci. (5)},
      volume={18},
      number={3},
       pages={881\ndash 916},
      review={\MR{3807590}},
}

\bib{BOR13}{article}{
      author={Byun, S.-S.},
      author={Ok, J.},
      author={Ryu, S.},
       title={Global gradient estimates for general nonlinear parabolic
  equations in nonsmooth domains},
        date={2013},
        ISSN={0022-0396},
     journal={J. Differential Equations},
      volume={254},
      number={11},
       pages={4290\ndash 4326},
         url={http://dx.doi.org/10.1016/j.jde.2013.03.004},
      review={\MR{3035434}},
}

\bib{BPS20}{article}{
      author={Byun, S.-S.},
      author={Palagachev, D.~K.},
      author={Shin, P.},
       title={Optimal regularity estimates for general nonlinear parabolic
  equations},
        date={2020},
        ISSN={0025-2611},
     journal={Manuscripta Math.},
      volume={162},
      number={1-2},
       pages={67\ndash 98},
         url={https://doi.org/10.1007/s00229-019-01127-8},
      review={\MR{4079798}},
}

\bib{BPS21}{article}{
      author={Byun, S.-S.},
      author={Park, J.-T.},
      author={Shin, P.},
       title={Global regularity for degenerate/singular parabolic equations
  involving measure data},
        date={2021},
        ISSN={0944-2669},
     journal={Calc. Var. Partial Differential Equations},
      volume={60},
      number={1},
       pages={Paper No. 18, 32 pp},
         url={https://doi.org/10.1007/s00526-020-01906-2},
      review={\MR{4201641}},
}

\bib{BW04}{article}{
      author={Byun, S.-S.},
      author={Wang, L.},
       title={Elliptic equations with {BMO} coefficients in {R}eifenberg
  domains},
        date={2004},
        ISSN={0010-3640},
     journal={Comm. Pure Appl. Math.},
      volume={57},
      number={10},
       pages={1283\ndash 1310},
         url={http://dx.doi.org/10.1002/cpa.20037},
      review={\MR{2069724}},
}

\bib{CP98}{article}{
      author={Caffarelli, L.~A.},
      author={Peral, I.},
       title={On {$W^{1,p}$} estimates for elliptic equations in divergence
  form},
        date={1998},
        ISSN={0010-3640},
     journal={Comm. Pure Appl. Math.},
      volume={51},
      number={1},
       pages={1\ndash 21},
  url={http://dx.doi.org/10.1002/(SICI)1097-0312(199801)51:1<1::AID-CPA1>3.3.CO;2-N},
      review={\MR{1486629}},
}

\bib{Cas86}{article}{
      author={Casas, E.},
       title={Control of an elliptic problem with pointwise state constraints},
        date={1986},
        ISSN={0363-0129},
     journal={SIAM J. Control Optim.},
      volume={24},
      number={6},
       pages={1309\ndash 1318},
         url={https://doi.org/10.1137/0324078},
      review={\MR{861100}},
}

\bib{Cas93}{article}{
      author={Casas, E.},
       title={Boundary control of semilinear elliptic equations with pointwise
  state constraints},
        date={1993},
        ISSN={0363-0129},
     journal={SIAM J. Control Optim.},
      volume={31},
      number={4},
       pages={993\ndash 1006},
         url={http://dx.doi.org/10.1137/0331044},
      review={\MR{1227543}},
}

\bib{CdT08}{article}{
      author={Casas, E.},
      author={de~los Reyes, J.~C.},
      author={Tr\"oltzsch, F.},
       title={Sufficient second-order optimality conditions for semilinear
  control problems with pointwise state constraints},
        date={2008},
        ISSN={1052-6234},
     journal={SIAM J. Optim.},
      volume={19},
      number={2},
       pages={616\ndash 643},
         url={http://dx.doi.org/10.1137/07068240X},
      review={\MR{2425032}},
}

\bib{DMOP99}{article}{
      author={Dal~Maso, G.},
      author={Murat, F.},
      author={Orsina, L.},
      author={Prignet, A.},
       title={Renormalized solutions of elliptic equations with general measure
  data},
        date={1999},
        ISSN={0391-173X},
     journal={Ann. Scuola Norm. Sup. Pisa Cl. Sci. (4)},
      volume={28},
      number={4},
       pages={741\ndash 808},
         url={http://www.numdam.org/item?id=ASNSP_1999_4_28_4_741_0},
      review={\MR{1760541}},
}

\bib{Dal96}{article}{
      author={Dall'Aglio, A.},
       title={Approximated solutions of equations with {$L^1$} data.
  {A}pplication to the {$H$}-convergence of quasi-linear parabolic equations},
        date={1996},
        ISSN={0003-4622},
     journal={Ann. Mat. Pura Appl. (4)},
      volume={170},
       pages={207\ndash 240},
         url={http://dx.doi.org/10.1007/BF01758989},
      review={\MR{1441620}},
}

\bib{DiB93}{book}{
      author={DiBenedetto, E.},
       title={Degenerate parabolic equations},
      series={Universitext},
   publisher={Springer-Verlag, New York},
        date={1993},
        ISBN={0-387-94020-0},
         url={https://doi.org/10.1007/978-1-4612-0895-2},
      review={\MR{1230384}},
}

\bib{DF85}{article}{
      author={DiBenedetto, E.},
      author={Friedman, A.},
       title={H\"{o}lder estimates for nonlinear degenerate parabolic systems},
        date={1985},
        ISSN={0075-4102},
     journal={J. Reine Angew. Math.},
      volume={357},
       pages={1\ndash 22},
         url={https://doi.org/10.1515/crll.1985.357.1},
      review={\MR{783531}},
}

\bib{DF85b}{article}{
      author={DiBenedetto, E.},
      author={Friedman, A.},
       title={Addendum to: ``{H}\"{o}lder estimates for nonlinear degenerate
  parabolic systems''},
        date={1985},
        ISSN={0075-4102},
     journal={J. Reine Angew. Math.},
      volume={363},
       pages={217\ndash 220},
         url={https://doi.org/10.1515/crll.1985.363.217},
      review={\MR{814022}},
}

\bib{DL89b}{article}{
      author={DiPerna, R.~J.},
      author={Lions, P.-L.},
       title={On the {C}auchy problem for {B}oltzmann equations: global
  existence and weak stability},
        date={1989},
        ISSN={0003-486X},
     journal={Ann. of Math. (2)},
      volume={130},
      number={2},
       pages={321\ndash 366},
         url={https://doi.org/10.2307/1971423},
      review={\MR{1014927}},
}

\bib{DL89a}{article}{
      author={DiPerna, R.~J.},
      author={Lions, P.-L.},
       title={Ordinary differential equations, transport theory and {S}obolev
  spaces},
        date={1989},
        ISSN={0020-9910},
     journal={Invent. Math.},
      volume={98},
      number={3},
       pages={511\ndash 547},
         url={https://doi.org/10.1007/BF01393835},
      review={\MR{1022305}},
}

\bib{DZ21b}{misc}{
      author={Dong, H.},
      author={Zhu, H.},
       title={Gradient estimates for singular parabolic $p$-laplace type
  equations with measure data},
        date={2021},
        note={\href{https://arxiv.org/abs/2111.03050}{arXiv:2111.03050}},
}

\bib{DPP03}{article}{
      author={Droniou, J.},
      author={Porretta, A.},
      author={Prignet, A.},
       title={Parabolic capacity and soft measures for nonlinear equations},
        date={2003},
        ISSN={0926-2601},
     journal={Potential Anal.},
      volume={19},
      number={2},
       pages={99\ndash 161},
         url={https://doi.org/10.1023/A:1023248531928},
      review={\MR{1976292}},
}

\bib{DM10}{article}{
      author={Duzaar, F.},
      author={Mingione, G.},
       title={Gradient estimates via linear and nonlinear potentials},
        date={2010},
        ISSN={0022-1236},
     journal={J. Funct. Anal.},
      volume={259},
      number={11},
       pages={2961\ndash 2998},
         url={http://dx.doi.org/10.1016/j.jfa.2010.08.006},
      review={\MR{2719282}},
}

\bib{DM11}{article}{
      author={Duzaar, F.},
      author={Mingione, G.},
       title={Gradient estimates via non-linear potentials},
        date={2011},
        ISSN={0002-9327},
     journal={Amer. J. Math.},
      volume={133},
      number={4},
       pages={1093\ndash 1149},
         url={http://dx.doi.org/10.1353/ajm.2011.0023},
      review={\MR{2823872}},
}

\bib{FST91}{article}{
      author={Fukushima, M.},
      author={Sato, K.-i.},
      author={Taniguchi, S.},
       title={On the closable parts of pre-{D}irichlet forms and the fine
  supports of underlying measures},
        date={1991},
        ISSN={0030-6126},
     journal={Osaka J. Math.},
      volume={28},
      number={3},
       pages={517\ndash 535},
         url={http://projecteuclid.org/euclid.ojm/1200783223},
      review={\MR{1144471}},
}

\bib{Giu03}{book}{
      author={Giusti, E.},
       title={Direct methods in the calculus of variations},
   publisher={World Scientific Publishing Co., Inc., River Edge, NJ},
        date={2003},
        ISBN={981-238-043-4},
         url={http://dx.doi.org/10.1142/9789812795557},
      review={\MR{1962933}},
}

\bib{Ham92}{article}{
      author={Hamburger, C.},
       title={Regularity of differential forms minimizing degenerate elliptic
  functionals},
        date={1992},
        ISSN={0075-4102},
     journal={J. Reine Angew. Math.},
      volume={431},
       pages={7\ndash 64},
         url={https://doi.org/10.1515/crll.1992.431.7},
      review={\MR{1179331}},
}

\bib{KKKP13}{article}{
      author={Kinnunen, J.},
      author={Korte, R.},
      author={Kuusi, T.},
      author={Parviainen, M.},
       title={Nonlinear parabolic capacity and polar sets of superparabolic
  functions},
        date={2013},
        ISSN={0025-5831},
     journal={Math. Ann.},
      volume={355},
      number={4},
       pages={1349\ndash 1381},
         url={https://doi.org/10.1007/s00208-012-0825-x},
      review={\MR{3037018}},
}

\bib{KL00}{article}{
      author={Kinnunen, J.},
      author={Lewis, J.~L.},
       title={Higher integrability for parabolic systems of {$p$}-{L}aplacian
  type},
        date={2000},
        ISSN={0012-7094},
     journal={Duke Math. J.},
      volume={102},
      number={2},
       pages={253\ndash 271},
         url={http://dx.doi.org/10.1215/S0012-7094-00-10223-2},
      review={\MR{1749438}},
}

\bib{KLP10}{article}{
      author={Kinnunen, J.},
      author={Lukkari, T.},
      author={Parviainen, M.},
       title={An existence result for superparabolic functions},
        date={2010},
        ISSN={0022-1236},
     journal={J. Funct. Anal.},
      volume={258},
      number={3},
       pages={713\ndash 728},
         url={http://dx.doi.org/10.1016/j.jfa.2009.08.009},
      review={\MR{2558174}},
}

\bib{KLP13}{article}{
      author={Kinnunen, J.},
      author={Lukkari, T.},
      author={Parviainen, M.},
       title={Local approximation of superharmonic and superparabolic functions
  in nonlinear potential theory},
        date={2013},
        ISSN={1661-7738},
     journal={J. Fixed Point Theory Appl.},
      volume={13},
      number={1},
       pages={291\ndash 307},
         url={https://doi.org/10.1007/s11784-013-0108-5},
      review={\MR{3071955}},
}

\bib{KS03}{article}{
      author={Kinnunen, J.},
      author={Saksman, E.},
       title={Regularity of the fractional maximal function},
        date={2003},
        ISSN={0024-6093},
     journal={Bull. London Math. Soc.},
      volume={35},
      number={4},
       pages={529\ndash 535},
         url={https://doi.org/10.1112/S0024609303002017},
      review={\MR{1979008}},
}

\bib{KR19}{article}{
      author={Klimsiak, T.},
      author={Rozkosz, A.},
       title={On the structure of diffuse measures for parabolic capacities},
        date={2019},
        ISSN={1631-073X},
     journal={C. R. Math. Acad. Sci. Paris},
      volume={357},
      number={5},
       pages={443\ndash 449},
         url={https://doi.org/10.1016/j.crma.2019.04.012},
      review={\MR{3959840}},
}

\bib{KM13b}{article}{
      author={Kuusi, T.},
      author={Mingione, G.},
       title={Gradient regularity for nonlinear parabolic equations},
        date={2013},
        ISSN={0391-173X},
     journal={Ann. Sc. Norm. Super. Pisa Cl. Sci. (5)},
      volume={12},
      number={4},
       pages={755\ndash 822},
      review={\MR{3184569}},
}

\bib{KM14c}{article}{
      author={Kuusi, T.},
      author={Mingione, G.},
       title={Riesz potentials and nonlinear parabolic equations},
        date={2014},
        ISSN={0003-9527},
     journal={Arch. Ration. Mech. Anal.},
      volume={212},
      number={3},
       pages={727\ndash 780},
         url={https://doi.org/10.1007/s00205-013-0695-8},
      review={\MR{3187676}},
}

\bib{KM14b}{article}{
      author={Kuusi, T.},
      author={Mingione, G.},
       title={The {W}olff gradient bound for degenerate parabolic equations},
        date={2014},
        ISSN={1435-9855},
     journal={J. Eur. Math. Soc. (JEMS)},
      volume={16},
      number={4},
       pages={835\ndash 892},
         url={http://dx.doi.org/10.4171/JEMS/449},
      review={\MR{3191979}},
}

\bib{KM18}{article}{
      author={Kuusi, T.},
      author={Mingione, G.},
       title={Vectorial nonlinear potential theory},
        date={2018},
        ISSN={1435-9855},
     journal={J. Eur. Math. Soc. (JEMS)},
      volume={20},
      number={4},
       pages={929\ndash 1004},
         url={https://doi.org/10.4171/JEMS/780},
      review={\MR{3779689}},
}

\bib{LMS14}{article}{
      author={Lemenant, A.},
      author={Milakis, E.},
      author={Spinolo, L.~V.},
       title={On the extension property of {R}eifenberg-flat domains},
        date={2014},
        ISSN={1239-629X},
     journal={Ann. Acad. Sci. Fenn. Math.},
      volume={39},
      number={1},
       pages={51\ndash 71},
         url={http://dx.doi.org/10.5186/aasfm.2014.3907},
      review={\MR{3186805}},
}

\bib{LL94}{article}{
      author={LeVeque, R.~J.},
      author={Li, Z.},
       title={The immersed interface method for elliptic equations with
  discontinuous coefficients and singular sources},
        date={1994},
        ISSN={0036-1429},
     journal={SIAM J. Numer. Anal.},
      volume={31},
      number={4},
       pages={1019\ndash 1044},
         url={https://doi.org/10.1137/0731054},
      review={\MR{1286215}},
}

\bib{Lie93}{article}{
      author={Lieberman, G.~M.},
       title={Boundary and initial regularity for solutions of degenerate
  parabolic equations},
        date={1993},
        ISSN={0362-546X},
     journal={Nonlinear Anal.},
      volume={20},
      number={5},
       pages={551\ndash 569},
         url={https://doi.org/10.1016/0362-546X(93)90038-T},
      review={\MR{1207530}},
}

\bib{MPS11}{article}{
      author={Meyer, C.},
      author={Panizzi, L.},
      author={Schiela, A.},
       title={Uniqueness criteria for the adjoint equation in state-constrained
  elliptic optimal control},
        date={2011},
        ISSN={0163-0563},
     journal={Numer. Funct. Anal. Optim.},
      volume={32},
      number={9},
       pages={983\ndash 1007},
         url={http://dx.doi.org/10.1080/01630563.2011.587074},
      review={\MR{2823475}},
}

\bib{Min07}{article}{
      author={Mingione, G.},
       title={The {C}alder\'on-{Z}ygmund theory for elliptic problems with
  measure data},
        date={2007},
        ISSN={0391-173X},
     journal={Ann. Sc. Norm. Super. Pisa Cl. Sci. (5)},
      volume={6},
      number={2},
       pages={195\ndash 261},
      review={\MR{2352517}},
}

\bib{Min10}{article}{
      author={Mingione, G.},
       title={Gradient estimates below the duality exponent},
        date={2010},
        ISSN={0025-5831},
     journal={Math. Ann.},
      volume={346},
      number={3},
       pages={571\ndash 627},
         url={http://dx.doi.org/10.1007/s00208-009-0411-z},
      review={\MR{2578563}},
}

\bib{Ngu_pre}{article}{
      author={Nguyen, Q.-H.},
       title={Potential estimates and quasilinear parabolic equations with
  measure data},
     journal={Mem. Amer. Math. Soc.},
       pages={to appear},
        note={\href{https://arxiv.org/abs/1405.2587}{arXiv:1405.2587}},
}

\bib{NP19}{article}{
      author={Nguyen, Q.-H.},
      author={Phuc, N.~C.},
       title={Good-{$\lambda$} and {M}uckenhoupt-{W}heeden type bounds in
  quasilinear measure datum problems, with applications},
        date={2019},
        ISSN={0025-5831},
     journal={Math. Ann.},
      volume={374},
      number={1-2},
       pages={67\ndash 98},
         url={https://doi.org/10.1007/s00208-018-1744-2},
      review={\MR{3961305}},
}

\bib{NP20}{article}{
      author={Nguyen, Q.-H.},
      author={Phuc, N.~C.},
       title={Pointwise gradient estimates for a class of singular quasilinear
  equations with measure data},
        date={2020},
        ISSN={0022-1236},
     journal={J. Funct. Anal.},
      volume={278},
      number={5},
       pages={108391, 35 pp},
         url={https://doi.org/10.1016/j.jfa.2019.108391},
      review={\MR{4046205}},
}

\bib{NP20b}{article}{
      author={Nguyen, Q.-H.},
      author={Phuc, N.~C.},
       title={Existence and regularity estimates for quasilinear equations with
  measure data: the case $1<p\leq \frac{3n-2}{2n-1}$},
    journal={Anal. PDE},
      pages={to appear},
        note={\href{https://arxiv.org/abs/2003.03725}{arXiv:2003.03725}},
}

\bib{OF03}{book}{
      author={Osher, S.},
      author={Fedkiw, R.},
       title={Level set methods and dynamic implicit surfaces},
      series={Applied Mathematical Sciences},
   publisher={Springer-Verlag, New York},
        date={2003},
      volume={153},
        ISBN={0-387-95482-1},
         url={https://doi.org/10.1007/b98879},
      review={\MR{1939127}},
}

\bib{Pes77}{article}{
      author={Peskin, C.~S.},
       title={Numerical analysis of blood flow in the heart},
        date={1977},
        ISSN={0021-9991},
     journal={J. Computational Phys.},
      volume={25},
      number={3},
       pages={220\ndash 252},
      review={\MR{0490027}},
}

\bib{PM89}{article}{
      author={Peskin, C.~S.},
      author={McQueen, D.~M.},
       title={A three-dimensional computational method for blood flow in the
  heart. {I}. {I}mmersed elastic fibers in a viscous incompressible fluid},
        date={1989},
        ISSN={0021-9991},
     journal={J. Comput. Phys.},
      volume={81},
      number={2},
       pages={372\ndash 405},
         url={https://doi.org/10.1016/0021-9991(89)90213-1},
      review={\MR{994353}},
}

\bib{Pet08}{article}{
      author={Petitta, F.},
       title={Renormalized solutions of nonlinear parabolic equations with
  general measure data},
        date={2008},
        ISSN={0373-3114},
     journal={Ann. Mat. Pura Appl. (4)},
      volume={187},
      number={4},
       pages={563\ndash 604},
         url={http://dx.doi.org/10.1007/s10231-007-0057-y},
      review={\MR{2413369}},
}

\bib{PPP11}{article}{
      author={Petitta, F.},
      author={Ponce, A.~C.},
      author={Porretta, A.},
       title={Diffuse measures and nonlinear parabolic equations},
        date={2011},
        ISSN={1424-3199},
     journal={J. Evol. Equ.},
      volume={11},
      number={4},
       pages={861\ndash 905},
         url={https://doi.org/10.1007/s00028-011-0115-1},
      review={\MR{2861310}},
}

\bib{PP15}{article}{
      author={Petitta, F.},
      author={Porretta, A.},
       title={On the notion of renormalized solution to nonlinear parabolic
  equations with general measure data},
        date={2015},
        ISSN={2296-9020},
     journal={J. Elliptic Parabol. Equ.},
      volume={1},
       pages={201\ndash 214},
         url={https://doi.org/10.1007/BF03377376},
      review={\MR{3403419}},
}

\bib{Phu14c}{article}{
      author={Phuc, N.~C.},
       title={Global integral gradient bounds for quasilinear equations below
  or near the natural exponent},
        date={2014},
        ISSN={0004-2080},
     journal={Ark. Mat.},
      volume={52},
      number={2},
       pages={329\ndash 354},
         url={http://dx.doi.org/10.1007/s11512-012-0177-5},
      review={\MR{3255143}},
}

\bib{Phu14b}{article}{
      author={Phuc, N.~C.},
       title={Morrey global bounds and quasilinear {R}iccati type equations
  below the natural exponent},
        date={2014},
        ISSN={0021-7824},
     journal={J. Math. Pures Appl. (9)},
      volume={102},
      number={1},
       pages={99\ndash 123},
         url={http://dx.doi.org/10.1016/j.matpur.2013.11.003},
      review={\MR{3212250}},
}

\bib{Phu14a}{article}{
      author={Phuc, N.~C.},
       title={Nonlinear {M}uckenhoupt-{W}heeden type bounds on {R}eifenberg
  flat domains, with applications to quasilinear {R}iccati type equations},
        date={2014},
        ISSN={0001-8708},
     journal={Adv. Math.},
      volume={250},
       pages={387\ndash 419},
         url={http://dx.doi.org/10.1016/j.aim.2013.09.022},
      review={\MR{3122172}},
}

\bib{Pie83}{article}{
      author={Pierre, M.},
       title={Parabolic capacity and {S}obolev spaces},
        date={1983},
        ISSN={0036-1410},
     journal={SIAM J. Math. Anal.},
      volume={14},
      number={3},
       pages={522\ndash 533},
         url={https://doi.org/10.1137/0514044},
      review={\MR{697527}},
}

\bib{Pri97}{article}{
      author={Prignet, A.},
       title={Existence and uniqueness of ``entropy'' solutions of parabolic
  problems with {$L^1$} data},
        date={1997},
        ISSN={0362-546X},
     journal={Nonlinear Anal.},
      volume={28},
      number={12},
       pages={1943\ndash 1954},
         url={https://doi.org/10.1016/S0362-546X(96)00030-2},
      review={\MR{1436364}},
}

\bib{Sch12b}{article}{
      author={Scheven, C.},
       title={Elliptic obstacle problems with measure data: potentials and low
  order regularity},
        date={2012},
        ISSN={0214-1493},
     journal={Publ. Mat.},
      volume={56},
      number={2},
       pages={327\ndash 374},
         url={https://doi.org/10.5565/PUBLMAT_56212_04},
      review={\MR{2978327}},
}

\bib{Sch12a}{article}{
      author={Scheven, C.},
       title={Gradient potential estimates in non-linear elliptic obstacle
  problems with measure data},
        date={2012},
        ISSN={0022-1236},
     journal={J. Funct. Anal.},
      volume={262},
      number={6},
       pages={2777\ndash 2832},
         url={https://doi.org/10.1016/j.jfa.2012.01.003},
      review={\MR{2885965}},
}

\bib{Ste93}{book}{
      author={Stein, E.~M.},
       title={Harmonic analysis: real-variable methods, orthogonality, and
  oscillatory integrals},
      series={Princeton Mathematical Series},
   publisher={Princeton University Press, Princeton, NJ},
        date={1993},
      volume={43},
        ISBN={0-691-03216-5},
        note={With the assistance of Timothy S. Murphy, Monographs in Harmonic
  Analysis, III},
      review={\MR{1232192}},
}

\bib{SSO94}{article}{
      author={Sussman, M.},
      author={Smereka, P.},
      author={Osher, S.},
       title={A level set approach for computing solutions to incompressible
  two-phase flow},
        date={1994},
        ISSN={0021-9991},
     journal={J. Comput. Phys.},
      volume={114},
      number={1},
       pages={146\ndash 159},
         url={http://dx.doi.org/10.1006/jcph.1994.1155},
}

\bib{Tor97}{article}{
      author={Toro, T.},
       title={Doubling and flatness: geometry of measures},
        date={1997},
        ISSN={0002-9920},
     journal={Notices Amer. Math. Soc.},
      volume={44},
      number={9},
       pages={1087\ndash 1094},
      review={\MR{1470167}},
}

\bib{Urb08}{book}{
      author={Urbano, J.~M.},
       title={The method of intrinsic scaling},
      series={Lecture Notes in Mathematics},
   publisher={Springer-Verlag, Berlin},
        date={2008},
      volume={1930},
        ISBN={978-3-540-75931-7},
         url={https://doi.org/10.1007/978-3-540-75932-4},
        note={A systematic approach to regularity for degenerate and singular
  PDEs},
      review={\MR{2451216}},
}

\bib{Wan03}{article}{
      author={Wang, L.},
       title={A geometric approach to the {C}alder\'on-{Z}ygmund estimates},
        date={2003},
        ISSN={1439-8516},
     journal={Acta Math. Sin. (Engl. Ser.)},
      volume={19},
      number={2},
       pages={381\ndash 396},
         url={http://dx.doi.org/10.1007/s10114-003-0264-4},
      review={\MR{1987802}},
}

\end{biblist}
\end{bibdiv}

\end{document}